\documentclass[12pt,reqno]{amsart}
\usepackage{amsmath,amsthm,amssymb,amsfonts,amscd}
\usepackage{mathrsfs}
\usepackage{bbm}
\usepackage{bbding}
\usepackage{hyperref}
\usepackage{geometry}\geometry{margin=1in}
\usepackage{color}
\usepackage{xcolor}

\usepackage{picture,epic}
\usepackage{tikz}




\numberwithin{equation}{section}

\setcounter{footnote}{0}

\theoremstyle{plain}
\newtheorem{theorem}{Theorem}
\newtheorem{lemma}[theorem]{Lemma}

\newtheorem{proposition}[theorem]{Proposition}

\theoremstyle{definition}

\theoremstyle{remark}
\newtheorem{remark}[theorem]{Remark}

\renewcommand{\Re}{\operatorname{Re}}
\renewcommand{\Im}{\operatorname{Im}}

\newcommand{\sgn}{\operatorname{sgn}}

\newcommand{\supp}{\operatorname{supp}}

\newcommand{\Sym}{\operatorname{Sym}}

\newcommand{\GL}{\operatorname{GL}}
\newcommand{\SL}{\operatorname{SL}}

\renewcommand{\mod}{\operatorname{mod}\ }

\newcommand{\dd}{\mathrm{d}}
\newcommand{\Res}{\mathop{\operatorname{Res}}}


\makeatletter
\def\@tocline#1#2#3#4#5#6#7{\relax
  \ifnum #1>\c@tocdepth 
  \else
    \par \addpenalty\@secpenalty\addvspace{#2}%
    \begingroup \hyphenpenalty\@M
    \@ifempty{#4}{%
      \@tempdima\csname r@tocindent\number#1\endcsname\relax
    }{%
      \@tempdima#4\relax
    }%
    \parindent\z@ \leftskip#3\relax \advance\leftskip\@tempdima\relax
    \rightskip\@pnumwidth plus4em \parfillskip-\@pnumwidth
    #5\leavevmode\hskip-\@tempdima
      \ifcase #1
       \or\or \hskip 1em \or \hskip 2em \else \hskip 3em \fi%
      #6\nobreak\relax
    \hfill\hbox to\@pnumwidth{\@tocpagenum{#7}}\par
    \nobreak
    \endgroup
  \fi}
\makeatother

\begin{document}

\title
{On the Rankin--Selberg problem}
\author{Bingrong Huang}
\address{Data Science Institute and School of Mathematics \\ Shandong University \\ Jinan \\ Shandong 250100 \\China}
\email{brhuang@sdu.edu.cn}


\date{\today}

\begin{abstract}
  In this paper, we solve the Rankin--Selberg problem. That is, we break the well known Rankin--Selberg's bound on the error term of the second moment of Fourier coefficients of a $\GL(2)$ cusp form (both holomorphic and Maass), which remains its record since its birth for more than 80 years. We extend our method to deal with averages of coefficients of L-functions which can be factorized as a product of a degree one and a degree three L-functions.
\end{abstract}

\keywords{The Rankin--Selberg problem, Hecke eigenvalues, second moment, L-functions, delta method}

\subjclass[2010]{11F30, 11L07, 11F66}

\thanks{This work was supported by the Young Taishan Scholars Program of Shandong Province (Grant No. tsqn201909046), Qilu Young Scholar Program of Shandong University, and NSFC (Nos. 12001314 and 12031008).}

\maketitle

\section{Introduction} \label{sec:Intr}


Let $L(s,f)$ be an L-function of degree $d$ in the sense of Iwaniec--Kowalski \cite[\S5.1]{IwaniecKowalski2004analytic}
with coefficients $\lambda_f(1)=1$, $\lambda_f(n)\in\mathbb{C}$.
See \S\ref{subsec:AFE} below.
It is a fundamental problem to prove an asymptotic formula for the sum
\[
  \mathcal A(X,f) = \sum_{n\leq X} \lambda_f(n).
\]
Under some suitable conditions, one can prove an asymptotic formula
\[
  \mathcal A(X,f) = \Res_{s=1} \frac{L(s,f) X^s}{s} + O_f\left(X^{\frac{d-1}{d+1}+o(1)}\right),
\]
in quite general situations. See e.g. Friedlander--Iwaniec \cite{FI2005}.
If $d\geq4$, the generalized Riemann hypothesis (GRH) for $L(s,f)$ implies the exponent $\frac{d-1}{d+1}$ can be replaced by $1/2$.
This type of result has a very important application to the generalized Ramanujan conjecture (see Serre \cite{Serre}).
In this paper, we consider an important special case when $f$ is the Rankin--Selberg convolution of a $\GL(2)$ automorphic representation with itself, 
which has degree $d=4$.
Our goal is to beat the exponent $3/5$ in the error term.

Let $\phi$ be a $\GL(2)$ holomorphic Hecke cusp form or
Hecke--Maass cusp form for $\SL(2,\mathbb{Z})$.
Let $\lambda_\phi(n)$ be its $n$-th Hecke eigenvalue. We define
\[
  \mathcal S_2(X,\phi) = \sum_{n\leq X} \lambda_\phi(n)^2,\quad
  \Delta_2(X,\phi) = \mathcal S_2(X,\phi) - c_\phi X,
\]
where $c_\phi = L(1,\Sym^2 \phi)/\zeta(2)$ and $L(s,\Sym^2 \phi)$ is the symmetric square L-function of $\phi$.
Rankin \cite{Rankin1939} and Selberg  \cite{Selberg1940} invented the powerful Rankin--Selberg method, and then successfully showed that
\[
  \Delta_2(X,\phi) \ll_\phi X^{3/5}.
\]
This bound remains the best since it was proved more than 80 years.
The \emph{Rankin--Selberg problem} is to improve the exponent $3/5$.
Although we have several methods to prove essentially the same bound as above (see e.g. Ivi\'{c} \cite{Ivic2008}), the exponent $3/5$ represents one of the longest standing records in analytic number theory.
The generalized Riemann Hypothesis implies $\Delta_2(X,\phi) \ll X^{1/2+o(1)}$.
It is conjectured  that (see e.g. Ivi\'{c} \cite[Eq. (7.23)]{Ivic1990large})
\[
  \Delta_2(X,\phi) = O_\phi(X^{3/8+o(1)})
  \quad \textrm{and} \quad
  \Delta_2(X,\phi) = \Omega_\phi(X^{3/8}).
\]
The above $\Omega$-result was proved by Lau--L\"{u}--Wu \cite{LauLuWu2011integral}.
Note that $\mathcal S_2(X,\phi)$ is essentially the same as $\mathcal A(X,f)$ with $f=\phi\times\phi$ in which case we have the degree $d=4$.
In this paper, our main goal is to solve the Rankin--Selberg problem.

\begin{theorem}\label{thm:RS}
  With the notation as above. We have
  \begin{equation}\label{eqn:Delta<<}
    \Delta_2(X,\phi) \ll_\phi X^{3/5-\delta+o(1)},
  \end{equation}
  for any $\delta\leq 1/560 =0.001785...$.
\end{theorem}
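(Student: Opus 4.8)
The plan is to attack $\Delta_2(X,\phi)$ not through the Rankin--Selberg Dirichlet series directly, but through a smoothed version of the shifted‑convolution sum that appears when one opens $\lambda_\phi(n)^2$ and separates variables via the delta method. Concretely, after a standard smoothing and dyadic decomposition, $\Delta_2(X,\phi)$ is governed by sums of the form $\sum_{n\sim X} \lambda_\phi(n)^2 W(n/X) - (\text{main term})$, and the Voronoi summation formula for $\phi\times\phi$ (equivalently, for $\Sym^2\phi$ twisted by $\zeta$) converts this into a dual sum of length essentially $X^2/X = X$ against an oscillatory kernel. The classical bound $X^{3/5}$ comes from balancing the dual sum trivially against the conductor. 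To beat it, I would express the second moment as a sum over two variables, $\sum_{n_1 n_2 \approx X}\lambda_\phi(n_1)\lambda_\phi(n_2)$ after using the Hecke relation $\lambda_\phi(n)^2 = \sum_{d\mid n}\lambda_\phi(n^2/d^2)$ (and Möbius to remove the square), reducing matters to bounding averages of $\lambda_\phi(m)$ over $m$ in short intervals, i.e. to $\mathcal A(X,\Sym^2\phi)$‑type sums with an extra divisor‑type average — which is precisely the degree‑$3$ times degree‑$1$ situation flagged in the abstract.

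The key steps, in order: (i) open $\lambda_\phi(n)^2$ via Hecke multiplicativity to write $\mathcal S_2(X,\phi)$ as a main term plus $\sum_{\ell}\mu(\ell)\sum_{m}\lambda_\phi(m)(\text{count of } n\le X/\ell^2 \text{ with } m\mid\cdots)$, so that the error reduces to sums $\sum_{n\le Y}\lambda_{\Sym^2\phi}(n)$ with $Y$ ranging dyadically up to $X$, weighted and summed over $\ell$; (ii) apply the delta method (the Duke--Friedlander--Iwaniec / Kloosterman refinement) to detect the relevant congruence/equation $n_1 = n_2 + h$ with $h$ small, introducing a modulus $q\sim\sqrt{X}$ and additive characters; (iii) apply $\GL(3)$ Voronoi summation for $\Sym^2\phi$ on each of the two variables to shorten them, producing dual lengths of size roughly $q^3/X = X^{1/2}$ and Kloosterman‑type sums; (iv) estimate the resulting character sums — here one needs square‑root cancellation in the $q$‑sum, which is where the Kloosterman/delta method pays off over the circle method — and (v) optimize the dyadic parameters $(Y,q,\text{dual length})$ to extract the saving $\delta$. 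The numerology $3/5 - 1/560$ suggests the saving comes from a single clean application of Cauchy--Schwarz plus a Weil‑type bound, with $1/560$ emerging from the optimization against the three competing error terms (the off‑diagonal, the dual‑sum tail, and the error in the delta‑method approximation).

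The main obstacle I expect is step (iv): controlling the sums over the delta‑method modulus $q$ after two $\GL(3)$ Voronoi applications. One gets a trilinear form in $(q, n_1\text{-dual}, n_2\text{-dual})$ with hyper‑Kloosterman‑type coefficients attached to $\Sym^2\phi$, and the naive bound loses exactly at the threshold $3/5$; one needs genuine cancellation either from the $q$‑average (a large sieve / spectral input, or Deligne's bound for the relevant exponential sums) or from a second Cauchy--Schwarz that trades the arithmetic factor for the analytic oscillation. A secondary difficulty is that the $\Sym^2\phi$ Voronoi formula has a more complicated integral kernel (a Bessel‑type transform of order depending on whether $\phi$ is holomorphic or Maass), so the stationary‑phase analysis in step (iii) must be done uniformly; I would handle the holomorphic and Maass cases in parallel, isolating the archimedean analysis into a lemma on oscillatory integrals. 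Finally, assembling step (i) requires care that the $\ell$‑sum (from removing the square via Möbius) does not destroy the saving — the tail $\ell > X^{o(1)}$ must be shown to contribute acceptably by a trivial bound using $\sum_{m\le Y}|\lambda_\phi(m)|^2 \ll Y^{1+o(1)}$, which is where the Rankin--Selberg bound itself (in its weak form) gets recycled as an input.
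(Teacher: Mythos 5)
Your proposal correctly identifies the first reduction (passing from $\lambda_\phi(n)^2$ to $\lambda_{\phi\times\phi}(n)$ via M\"obius, and hence to $\mathcal A(X,1\boxplus\Sym^2\phi)$), and it correctly anticipates that the delta method and $\GL(3)$ Voronoi summation eventually enter. But the central mechanism of the paper is missing, and the route you sketch in steps (ii)--(iv) is, as stated, the one that does \emph{not} beat $3/5$. You propose to apply the delta method at the level of $\sum_{n\le X}\lambda_{\phi\times\phi}(n)$ by detecting a shifted--convolution equation $n_1=n_2+h$ and then hitting both variables with $\GL(3)$ Voronoi. That is essentially the mean-value/shifted-convolution approach; the paper notes explicitly (citing Ivi\'c) that such arguments reproduce the exponent $3/5$ but do not improve it, because there is no extra oscillation to feed the delta method: the sum $\sum_n\lambda_{\phi\times\phi}(n)W(n/X)$ has no twist attached, so double Voronoi just converts one balanced bilinear form into another.

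What actually breaks the $3/5$ barrier is a pivot you never introduce. After smoothing, the error term is controlled by the integral first moment
\[
\mathcal I \;=\; \int V\!\Bigl(\frac{t}{T}\Bigr)\, L\bigl(\tfrac12+it,\,1\boxplus\Sym^2\phi\bigr)\, X^{it}\,\dd t,
\]
with $T$ up to roughly $X^{2/5+\delta}$. For the critical range $T\asymp X^{2/5}$ one applies the approximate functional equation to $L(\tfrac12+it,1\boxplus\Phi)$ and then stationary phase in $t$ (Lemma \ref{lemma:stationary_phase}); the stationary point $\xi_0 = 2\pi(nX)^{1/4}/T$ is what manufactures the analytic twist $e(-4(nX)^{1/4})$ and fixes the dual length $N\asymp T^4/X$. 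Only then, after further splitting $\lambda_{1\boxplus\Phi}(n)=\sum_{\ell m=n}A(1,m)$, does the delta method appear --- and it is applied to the $\GL(3)$ variable $m$ alone, against the fixed monomial phase $e(-4(\ell m X)^{1/4})$ (Theorem \ref{thm:3}). The other variable $\ell$ is \emph{not} treated by Voronoi or by Weil-type bounds at all; it is handled by van der Corput / exponent-pair estimates (Lemma \ref{Lemma:FI}, with exponent pair $(1/30,13/15)$). The final exponent $\delta=1/560$ comes from balancing the exponent-pair estimate (when $\ell$ is long) against the delta-method/Voronoi bound of Theorem \ref{thm:3} (when $m$ is long), not from ``a single clean application of Cauchy--Schwarz plus a Weil-type bound.'' You should also note that the $\GL(3)$ Voronoi used inside the delta method is applied once, paired with a Poisson summation in the other delta-method variable, rather than the double $\GL(3)$ Voronoi you describe; the resulting character sums are the standard $\mathfrak{C}$ correlations of Kloosterman sums, not hyper-Kloosterman sums.

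In short: the gap is the absence of the L-function resonance step (Mellin transform $\to$ critical-line first moment $\to$ approximate functional equation $\to$ stationary phase), which is what produces the oscillation that the delta method can then exploit. Without this step, the delta method has nothing to bite on, and the shifted-convolution route you describe lands back at $X^{3/5}$.
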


%

\begin{remark}
  We emphasize that we do not expect our bounds to be optimal by our method. For example one may find a better exponent pair to improve our exponent, see e.g. Graham--Kolesnik \cite[Chap. 5 and 7]{GK}. Let $(k,\ell)$ be an exponent pair (see \cite[Chap. 3]{GK}). To get a better bound, we essentially need to minimize $(57 + 52 k - 42 \ell)/(97 + 82 k - 72 \ell)$.
  We choose the exponent pair $(1/30,13/15)$ obtained by the simplest van der Corput estimates (with fifth derivative), which gives a rather good bound and can be extended to more general cases (see Friedlander--Iwaniec \cite[\S4]{FI2005} and Remark \ref{remark:chi} below).
  We remark that  the well known exponent pair $(9/56+\varepsilon,37/56+\varepsilon)$ is not good for our purpose, but this combines with $A$-process twice we can take $A^2(9/56+\varepsilon,37/56+\varepsilon)=(9/278+\varepsilon, 241/278+\varepsilon)$ which may allow us to take $\delta=6/3235=0.001854...$.
  The best possible $\delta$ we may show is $37/19220=0.001925...$ by using the exponent pair $(13/414+\varepsilon, 359/414+\varepsilon)$, which is obtained by using  Bourgain's exponent pair $(13/84+\varepsilon,55/84+\varepsilon)$ (see \cite{Bourgain2017decoupling}) and $A$-process twice, i.e., $(13/414+\varepsilon, 359/414+\varepsilon)= A^2(13/84+\varepsilon,55/84+\varepsilon)$.
\end{remark}

\begin{remark}
  The same method works for $\phi$ being either holomorphic or Maass. In fact, the holomorphic case is easier, since we have the Ramanujan bounds for the coefficients, so we will give the proof for the Maass case.
\end{remark}

\begin{remark}
  If $\phi$ is a dihedral Maass cusp form (also a CM holomorphic cusp form), then we can prove
  \[
    \Delta_2(X,\phi) \ll_\phi X^{1/2+o(1)},
  \]
  unconditionally.
  The key fact we need is the factorization of the symmetric square L-function of a dihedral form. Then the result will follow from the approximate functional equations, the Cauchy--Schwarz inequality, and the integral mean-value estimate \eqref{eqn:IMV}.
\end{remark}

To prove Theorem \ref{thm:RS}, we first consider $\mathcal A(X,\phi\times \phi)$.
Let
\begin{equation}\label{eqn:RS=1+Sym2}
 L(s,\phi\times \phi) = \zeta(2s) \sum_{n=1}^{\infty} \frac{\lambda_\phi(n)^2}{n^s}, \quad \Re(s)>1
\end{equation}
be the Rankin--Selberg L-function of $\phi\times \phi$.
Note that $\phi\times \phi = 1 \boxplus \Sym^2 \phi$, where $\Sym^2 \phi$ is the symmetric square lift of $\phi$. That is $L(s,\phi\times \phi) = \zeta(s)L(s,\Sym^2 \phi)$.
In \cite{GelbartJacquet}, Gelbart--Jacquet proved that $\Sym^2 \phi$ is an automorphic cuspidal representation for $\GL(3)$.

This leads us to consider the more general case
\[
  \textrm{$f=1\boxplus g$, \;\; that is, \;\; $L(s,f)=\zeta(s)L(s,g)$},
\]
where $L(s,g)$ is a primitive L-function of degree $d-1$, that is, $L(s,g)$  cannot be decomposed into a product of L-functions of lower degrees.
When $d=2$ and $L(s,g)=\zeta(s)$, this is  the well-known classical divisor problem, in which case we can do better than the  exponent $1/3$ by using the theory of exponential sums (see e.g. Titchmarsh \cite[\S12.4]{titchmarsh1986theory}).
When $d=3$ and $L(s,g)=L(s,\phi)$, Friedlander--Iwaniec \cite[\S 4]{FI2005} showed that one can beat the exponent $1/2$ (see more discussion in Remark \ref{remark:d=3}). In this paper, we deal with the case  when $d=4$.

Let $\Phi$ be a Hecke--Maass cusp form for $\SL(3,\mathbb Z)$.
Let $A_\Phi(1,n)$ be the normalized Fourier coefficients of $\Phi$. The \emph{generalized Ramanujan conjecture} (GRC) for $\Phi$ asserts that $A_\Phi(1,n) \ll n^{o(1)}$.
Our method can be used to prove the following result under GRC.

\begin{theorem}\label{thm:1+3}
  With the notation as above. Assuming GRC for $\Phi$, then we have
  \begin{equation}\label{eqn:A=}
    \mathcal A(X,1\boxplus\Phi)  = L(1,\Phi) \, X + O_\Phi( X^{3/5-\delta+o(1)} ),
  \end{equation}
  for any $\delta\leq 1/560$. Furthermore, if $\Phi=\Sym^2 \phi$, then we don't need to assume GRC for $\Phi$.
\end{theorem}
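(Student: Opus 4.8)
The plan is to reduce the problem to bounding a sum of the form $\sum_{n\le X} A_\Phi(1,n)$, to which we apply essentially the same circle-method machinery that proves Theorem \ref{thm:RS}. Since $L(s,1\boxplus\Phi)=\zeta(s)L(s,\Phi)$, Dirichlet-convolving gives the coefficient $(1*A_\Phi(1,\cdot))(n)=\sum_{d\mid n}A_\Phi(1,d)$, so $\mathcal A(X,1\boxplus\Phi)=\sum_{m\le X}\sum_{n\le X/m}A_\Phi(1,n)$ (up to trivially small tails). The main term $L(1,\Phi)X$ will come from the $n$-contribution at its polar/residual part exactly as in the degree-four factorization $\zeta(s)L(s,\Sym^2\phi)$ used for the Rankin--Selberg problem; everything else must be shown to be $O(X^{3/5-\delta+o(1)})$.

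First I would set up the approximate functional equation for $L(s,\Phi)$ (degree three), so that $\sum_{n\le Y}A_\Phi(1,n)$ is controlled by smoothed sums $\sum_n A_\Phi(1,n) V(n/N)$ with $N\ll Y^{3/2+o(1)}$ (the degree-three conductor), plus a dual sum of comparable length after the functional equation. Next, to these $\GL(3)$ exponential sums I would apply the delta method (the DFI circle method, following Friedlander--Iwaniec \cite{FI2005} and the treatment alluded to in the remarks), opening the detection of $n\equiv 0$ and then invoking the $\GL(3)$ Voronoi summation formula to dualize the $n$-sum; this produces Kloosterman-type sums over the moduli together with arithmetic weights $A_\Phi(1,\cdot)$ on the dual side, where GRC for $\Phi$ is used to bound those coefficients by $n^{o(1)}$. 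After the usual stationary-phase analysis of the resulting oscillatory integrals, the remaining main input is a bound for the one-dimensional exponential sums that arise, which is exactly where the exponent pair enters: using the exponent pair $(1/30,13/15)$ coming from the fifth-derivative van der Corput estimate, the optimization $(57+52k-42\ell)/(97+82k-72\ell)$ recorded in the first remark yields the clean exponent $3/5-\delta$ with $\delta\le 1/560$, uniformly in the outer divisor variable $m$ after summing the geometric-type loss over $m\le X$.

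The hard part will be controlling everything uniformly in the extra divisor parameter $m$ and checking that GRC is genuinely the only place an unproven hypothesis is needed: in the Rankin--Selberg case the coefficients $\lambda_\phi(n)^2$ of $\Sym^2\phi$ obey Rankin--Selberg-type second-moment bounds on average even without Ramanujan, which is why the final sentence of the theorem can drop GRC when $\Phi=\Sym^2\phi$; for a general $\GL(3)$ form $\Phi$ one only has the weaker pointwise bound $A_\Phi(1,n)\ll n^{\theta_3+o(1)}$ with $\theta_3$ the best known exponent toward Ramanujan, and that loss is not affordable in the delta-method estimates, so GRC is invoked to set $\theta_3=0$. Concretely, I would prove the $\Phi=\Sym^2\phi$ case by replacing each pointwise use of GRC with the bounds $\sum_{n\le N}\lambda_\phi(n)^2\ll N^{1+o(1)}$ and $\sum_{n\le N}\lambda_\phi(n)^4\ll N^{1+o(1)}$ (Rankin--Selberg), applied after Cauchy--Schwarz at each step where dual-side coefficients are summed; then carry through the identical exponential-sum analysis. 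The one technical subtlety to watch is that $\Sym^2\phi$ satisfies the $\GL(3)$ Voronoi formula with the correct gamma factors (Gelbart--Jacquet cuspidality, cited in the excerpt), so the circle-method estimates apply verbatim to $L(s,\Sym^2\phi)$, giving Theorem \ref{thm:RS} as the special case $\mathcal S_2(X,\phi)=\mathcal A(X,1\boxplus\Sym^2\phi)$.
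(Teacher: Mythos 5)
Your proposal diverges from the paper at the very first step, and the divergence creates gaps that you would need to fill.

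The paper does \emph{not} open the Dirichlet convolution $\lambda_{1\boxplus\Phi}(n)=\sum_{d\mid n}A_\Phi(1,d)$ at the outset. Instead it smooths the sharp cutoff (and here, crucially, the nonnegativity $\lambda_{1\boxplus\Sym^2\phi}(n)=\lambda_{\phi\times\phi}(n)\ge 0$ is what lets one sandwich $\mathcal A(X)-\mathcal A(X/2)$ between two smoothed sums \emph{without} GRC --- not moment bounds $\sum\lambda_\phi(n)^4$ as you suggest), then takes a Mellin transform, shifts the contour to $\Re(s)=1/2$ to extract the main term $L(1,\Phi)\tilde W(1)X$ from the pole of $\zeta(s)$, and is left with the first integral moment $\mathcal I=\int V(t/T)L(1/2+it,1\boxplus\Phi)X^{it}\,\dd t$ over $T\ll X^{1+\varepsilon}/Y$. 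The central maneuver is then a stationary-phase evaluation of the $t$-integral after inserting the approximate functional equation for the \emph{degree-four} $L$-function $L(s,1\boxplus\Phi)$: the gamma-factor phase $(t/2\pi e)^{-4it}$ against $(nX)^{it}$ has stationary point $t_0=2\pi(nX)^{1/4}$, and this is the \emph{only} place the oscillating factor $e(-4(nX)^{1/4})$ appears, with $N\asymp T^4/X$. Only after this does the paper open $\lambda_{1\boxplus\Phi}(n)=\sum_{\ell m=n}A(1,m)$, and then it treats the two dyadic factors asymmetrically: van der Corput (the $(1/30,13/15)$ exponent pair) on the $\ell$-sum when $L$ is large, and Theorem~\ref{thm:3} (delta method $+$ $\GL(3)$ Voronoi for the analytically twisted sum $\sum_m A(1,m)e(T\varphi(m/M))$ with $\varphi(u)=u^{1/4}$) on the $m$-sum when $L$ is small; the optimization $(57+52k-42\ell)/(97+82k-72\ell)$ comes from balancing these two bounds at the cutoff in $L$.

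In your proposal, three things are missing or wrong. First, you say the main term ``will come from the $n$-contribution at its polar/residual part,'' but $L(s,\Phi)$ is entire for cuspidal $\Phi$ --- the $X L(1,\Phi)$ term comes from the pole of $\zeta(s)$, i.e.\ from approximating $\lfloor X/d\rfloor$ by $X/d$ in your hyperbola decomposition, not from $\Phi$. Second, and more seriously, you never identify the oscillating phase to which you would ``apply the delta method'' and the ``exponent pair $(1/30,13/15)$'': the smoothed sums $\sum_n A_\Phi(1,n)V(n/N)$ coming from an approximate functional equation carry no exponential twist by themselves, and the delta method in Theorem~\ref{thm:3} is designed precisely to bound $\sum_n A(1,n)e(T\varphi(n/N))V(n/N)$ with a nontrivial phase. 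Your route would produce the nonlinear phase $e(hX/d)$ (from the Fourier expansion of $\{X/d\}$), which is a genuinely different twist from the phase $e(-4(nX)^{1/4})$ the paper actually needs, and nothing in your write-up shows that this alternative phase leads to the same exponent or is compatible with the formula $(57+52k-42\ell)/(97+82k-72\ell)$ from Remark~1.2. Third, your mechanism for dropping GRC when $\Phi=\Sym^2\phi$ (Cauchy--Schwarz plus second/fourth moments of $\lambda_\phi$) is not what the paper does and is not obviously sufficient at the one spot GRC is used, namely controlling the $O(Y)$ ``edge'' contribution when replacing the sharp cutoff by a smooth one; the paper's nonnegativity sandwich handles exactly this and is a different (and sharper) idea. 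In short, the overall direction --- some AFE/circle-method combination with an exponent pair at the end --- is in the right spirit, but the load-bearing steps (Mellin $+$ contour shift, stationary phase in $t$ to \emph{produce} the $(nX)^{1/4}$ phase, the $\ell$-vs-$m$ dichotomy, and nonnegativity for the $\Sym^2\phi$ case) are not present.
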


\begin{remark}
  Let $\phi$ be a $\GL(2)$ Hecke--Maass cusp form for $\SL(2,\mathbb{Z})$. The \emph{Ramanujan conjecture} (RC) for $\phi$ says that $\lambda_\phi(n)\ll n^{o(1)}$.  In the above theorem we don't need to assume RC for $\phi$ (hence GRC for $\Sym^2 \phi$). The reason is that we have nonnegativity of the coefficients
  \begin{equation}\label{eqn:nonnegativity}
    \lambda_{1\boxplus\Sym^2\phi}(n) = \lambda_{\phi\times\phi}(n) = \sum_{\ell^2 m=n} \lambda_\phi(m)^2 \geq 0,
  \end{equation}
  by \eqref{eqn:RS=1+Sym2}. See more details in \S \ref{sec:dual}.
\end{remark}

\begin{remark}\label{remark:chi}
  As in Friedlander--Iwaniec \cite[\S 4]{FI2005}, we can prove similar result when we replace $1$ by a Dirichlet character $\chi$, that is, when $f=\chi\boxplus \Phi$.
\end{remark}

\begin{remark}\label{remark:d=3}
  In the case $f=1\boxplus \phi$ with degree $d=3$, under RC for $\phi$, the general theorem  (see e.g. \cite{FI2005}) will give us
  \[
    \mathcal A(X,1\boxplus \phi) = L(1,\phi) \, X + O_\phi(X^{1/2+o(1)}).
  \]
  A simple application of GRH will still give us the exponent $1/2$. Our method in this paper can be applied to this case, and we can show
  \[
    \mathcal A(X,1\boxplus \phi) = L(1,\phi) \, X + O_\phi(X^{1/2-\delta'+o(1)}),
  \]
  for some small positive $\delta'$, under RC.
  This slightly goes beyond a simple application of GRH.
  The same estimate holds for a holomorphic cusp form $\phi$. Note that in the holomorphic case, RC is known.
\end{remark}

There are two different methods to get $\frac{d-1}{d+1}$ if $f=1\boxplus g$ with $g$ primitive and $d\geq3$ as mentioned in Friedlander--Iwaniec \cite{FI2005}. We use the one with the contour of integral on the critical line. It is also possible to avoid this by shifting the contour to a vertical line with negative real part as done by Friedlander--Iwaniec.
Our approach leads to a new integral moment of L-functions
\[
  \int_{T}^{2T} L(1/2+it,1\boxplus\Phi) X^{it} \dd t.
\]
Finding good upper bounds for this integral moment is of independent interest and we want to highlight (see Proposition \ref{prop:I}).
By the approximate functional equation, the Cauchy inequality, and the integral mean value estimate, one can show the upper bound $O(T^{5/4+o(1)})$, which is good enough for small $T$'s. The most important case is when $T=X^{2/5+\delta}$. We seek for a better upper bound.
Our idea is to use moments of L-functions without absolute value, which reduces the problem to a dual sum of Fourier coefficients (cf. Huang \cite[\S7]{Huang}).

In order to prove Theorem  \ref{thm:1+3}, we will use a power saving for the analytic twisted sum of $\GL(3)$ Fourier coefficients.
Define
\[
  \mathscr{S}(N) := \sum_{n\geq1} A_\Phi(1,n) e\left(T \varphi\left(\frac{n}{N}\right)\right) V\left(\frac{n}{N}\right),
\]
where $T\geq1$ is a large parameter, $\varphi$ is some fixed real-valued smooth function, and $V\in C_c^\infty (\mathbb{R})$ with $\supp V\subset [1/2,1]$, total variation $\mathrm{Var}(V)\ll 1$ and satisfying that $V^{(k)} \ll P^k$ for all $k\geq0$ with $P\ll T^{\eta}$ for some small $\eta\in [0,1/10]$.

\begin{theorem}\label{thm:3}
  Assume $\varphi(u)=u^\beta$ with $\beta\in (0,1)$.
  Then we have
  \[
    \mathscr S(N) \ll_\Phi T^{3/10} N^{3/4+\varepsilon},
  \]
  if $T^{6/5} \leq N \leq T^{8/5-\varepsilon}$, and we have
  \[
    \mathscr S(N) \ll_\Phi T^{-1/2} N^{5/4+\varepsilon},
  \]
  if $T^{8/5-\varepsilon} \leq N \leq T^{2}$.
\end{theorem}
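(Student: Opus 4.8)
The plan is to run the $\GL(3)$ Voronoi summation formula against the (smooth but oscillatory) weight $n\mapsto e(T\varphi(n/N))V(n/N)$, analyse the resulting dual integral transform by stationary phase, and then estimate the dual sum of $\GL(3)$ Fourier coefficients by combining the Rankin--Selberg mean-value bound with the method of exponent pairs. Concretely, the first step is to apply Voronoi with trivial modulus, writing
\[
  \mathscr S(N)=\sum_{\pm}\sum_{m\geq1}\frac{A_\Phi(m,1)}{m}\,\Psi^{\pm}(m),\qquad
  \Psi^{\pm}(m)=\int_0^\infty e\!\left(T\varphi(y/N)\right)V(y/N)\,\mathcal J^{\pm}(my)\,\dd y,
\]
where $\mathcal J^{\pm}$ is the $\GL(3)$ Hankel kernel, whose large-argument expansion has the shape $\mathcal J^{\pm}(x)\sim x^{-1/3}\sum_{j\geq0}c_j^{\pm}x^{-j/3}e(\pm3x^{1/3})$.

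Second, I would substitute $y=Nu$ and feed this expansion into $\Psi^{\pm}(m)$, so the relevant phase becomes $Tu^{\beta}\pm3(mN)^{1/3}u^{1/3}$ for $u\in\supp V$. For the $+$ branch there is no stationary point, and repeated integration by parts — using $V^{(k)}\ll P^{k}$ with $P\ll T^{\eta}$, $\eta\leq1/10$ — gives an arbitrary power saving; the same holds for the $-$ branch unless $(mN)^{1/3}\asymp T$, i.e. unless $m\asymp M:=T^{3}/N$. For $m\asymp M$ the unique stationary point lies in $\supp V$, is non-degenerate with second derivative $\asymp T$, and stationary phase yields
\[
  \mathscr S(N)=\mathscr P(N,T)\sum_{m\asymp M}A_\Phi(m,1)\,e\!\left(\Psi_0(m)\right)W(m/M)+O_A(T^{-A}),
\]
where $W$ is $1$-inert, $\Psi_0(m)=c_{N,T}\,m^{\alpha}$ with $\alpha=\beta/(3\beta-1)$ (so $m\Psi_0'(m)\asymp T$, with the expected sizes for the higher derivatives), and $\mathscr P(N,T)$ is an explicit power of $T$ and $N$. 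The hypotheses $T^{6/5}\leq N$ and $N\leq T^{2}$ are exactly what keep $M$ in the range where this stationary-phase picture and the exponential-sum estimates below are valid (in particular $M\geq T$ and $P\ll T^{1/2}$).

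Third, bound the dual sum. When $N$ is large, $T^{8/5-\varepsilon}\leq N\leq T^{2}$, so that $M\leq T^{7/5}$ is short, the Rankin--Selberg estimate $\sum_{m\leq M}|A_\Phi(m,1)|^{2}\ll M^{1+\varepsilon}$ — which holds unconditionally, and is the reason no $\GL(3)$ Ramanujan hypothesis is needed when $\Phi=\Sym^{2}\phi$ — together with Cauchy--Schwarz gives $\sum_{m\asymp M}|A_\Phi(m,1)|\ll M^{1+\varepsilon}$ and hence $\mathscr S(N)\ll T^{-1/2}N^{5/4+\varepsilon}$. When $N$ is smaller, $T^{6/5}\leq N\leq T^{8/5-\varepsilon}$, the dual sum is longer and one must exploit the oscillation of $e(\Psi_0(m))$: split $[M,2M]$ into $\asymp M/M'$ intervals $I_j$ with $M'=MT^{-1/2}$, on each of which $\Psi_0$ is linear up to $O(1)$, so that $\sum_{m\in I_j}A_\Phi(m,1)e(\theta_j m)W$ is an additively twisted $\GL(3)$ sum; a uniform cancellation bound of Miller type for such sums (obtained from a further application of $\GL(3)$ Voronoi, Cauchy--Schwarz, and the Rankin--Selberg bound) of shape $\ll(M')^{3/4+\varepsilon}$ then produces, after summing over $j$ and inserting $\mathscr P(N,T)$, the bound $\mathscr S(N)\ll T^{3/10}N^{3/4+\varepsilon}$. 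Here the intervals play the role of van der Corput's $A$-process and the exponent $3/4$ is the $\GL(3)$ additive-twist exponent; a sharper treatment via the exponent pair $(1/30,13/15)$ is possible but not needed for the stated bound.

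The main obstacle is precisely this last step. The $\GL(3)$ Hecke coefficients $A_\Phi(m,1)$ do not factor as a divisor-type convolution, so one cannot feed $\sum_{m\asymp M}A_\Phi(m,1)e(\Psi_0(m))W(m/M)$ directly into the $k$-th derivative test; instead one must re-enter the Voronoi/Cauchy--Schwarz circle on the short intervals and verify that this still produces a genuine power saving over the trivial bound $M^{1+\varepsilon}$, uniformly in $N$, $T$, and the interval-dependent linear phases $\theta_j$. Balancing the three exponents involved — the Voronoi prefactor $\mathscr P(N,T)$, the number $M/M'$ of intervals, and the additive-twist bound $(M')^{3/4}$ — against the admissible range of $N$ is what forces the two regimes in the statement, and, downstream, in the deduction of Theorem~\ref{thm:RS}.
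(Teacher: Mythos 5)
Your approach is genuinely different from the paper's: you apply the $\GL(3)$ Voronoi formula directly with trivial modulus, pushing all the work onto the dual sum $\sum_{m\asymp M}A_\Phi(m,1)e(\Psi_0(m))$ with $M\asymp T^3/N$, whereas the paper runs the Duke--Friedlander--Iwaniec delta method first (Lemma \ref{lemma:delta}), introducing an auxiliary modulus $q\leq Q$, and only then applies Poisson in the smooth variable, $\GL(3)$ Voronoi at modulus $q/n_1$ in the arithmetic variable (Lemma \ref{lemma:VSF}), Cauchy--Schwarz to shed the $\GL(3)$ coefficients via \eqref{eqn:RS3}, and a second Poisson in \S\ref{subsec:Poisson}. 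Your large-$N$ branch is fine as stated: for $T^{8/5-\varepsilon}\leq N\leq T^2$ the stationary-phase evaluation of the Voronoi kernel gives $\Psi^{-}(m)\asymp (mN)^{2/3}T^{-1/2}$ on $m\asymp M$, and the Rankin--Selberg bound then yields $\mathscr S(N)\ll T^{3/2+\varepsilon}\leq T^{-1/2}N^{5/4+\varepsilon}$ with no further cancellation needed, which is a clean and somewhat more elementary route to that branch.

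However, there is a genuine gap in the small-$N$ branch, and it is exactly at the point you yourself flag as the ``main obstacle.'' Splitting $[M,2M]$ into $\asymp T^{1/2}$ intervals $I_j$ of length $M'=MT^{-1/2}$ and invoking a bound of the shape $\sum_{m\in I_j}A_\Phi(m,1)e(\theta_j m)\ll (M')^{3/4+\varepsilon}$, uniformly in $\theta_j$, is not a known result and does not follow from $\GL(3)$ Voronoi, Cauchy--Schwarz and \eqref{eqn:RS3}. What Miller's cancellation theorem (and the analogous Voronoi argument) actually gives is $\sum_{m\leq X}A_\Phi(m,1)e(\theta m)\ll X^{3/4+\varepsilon}$, so by differencing a subinterval $I_j\subset[M,2M]$ inherits the bound $M^{3/4+\varepsilon}$ in terms of the \emph{ambient} length $M$, not $(M')^{3/4+\varepsilon}$ in terms of the interval length. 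A short-interval additive-twist bound of the form you need would amount to nontrivial control of $\GL(3)$ Hecke eigenvalues in short intervals uniformly against all linear phases, which is essentially open. Substituting the available $M^{3/4+\varepsilon}$ into your bookkeeping gives $\mathscr S(N)\ll N^{1/4}T^{5/4+\varepsilon}$, which is \emph{weaker} than the trivial post-Voronoi bound $T^{3/2+\varepsilon}$ throughout $N\geq T$ and does not reach $T^{3/10}N^{3/4+\varepsilon}$ anywhere in $T^{6/5}\leq N\leq T^{8/5-\varepsilon}$. This is precisely why the paper does not attack the dual sum directly: the delta method inflates the problem with a modulus parameter $q$ whose optimal size $Q=N^{1/2}T^{-1/5}$ is chosen at the very end (\S\ref{subsec:Cauchy}), and the final cancellation in \S\ref{subsec:Poisson} comes from Kloosterman sums and elementary exponential integrals (Lemma \ref{lemma:J}), sidestepping any need for short-interval control of $A_\Phi(m,1)$. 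To make the direct-Voronoi route work in the small-$N$ regime you would need a different mechanism for the last step; as written the argument does not close.
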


In \cite{Munshi2015circleIII}, Munshi proved the first nontrivial result of this type for $\varphi(u)=-(\log u)/2\pi$ with $N\leq T^{3/2+\varepsilon}$, and got an application to the subconvexity bounds of $\GL(3)$ L-functions in the $T$-aspect. Recently, this was strengthened to the above bound for $\varphi(u)=-(\log u)/2\pi$ and $N\leq T^{3/2+\varepsilon}$ by Aggarwal \cite{Aggarwal} and for $\varphi(u)=u^{\beta}$ and $T=\alpha N^\beta$ by Kumar--Mallesham--Singh \cite{KMS} (with bounds depending on $\alpha$).
However, for Theorem  \ref{thm:1+3}, we need $\alpha$ to be quite large.
We also need the result for $N\geq T^{3/2+\varepsilon}$, which is unlike the subconvexity problem for $L(1/2+iT,\Phi)$. We will modify (and simplify) their methods to prove Theorem \ref{thm:3}. In fact, we use the Duke--Friedlander--Iwaniec delta method similar to what Munshi \cite{munshi2018} did, instead of the Kloosterman circle method.
We can deal with more general $\varphi$'s. For our main application, we only need $\varphi(u)=u^{1/4}$.

Similar result for $\GL(2)$ Fourier coefficients can be found in Jutila \cite{Jutila}. In \cite{munshi2018}, Munshi showed the first nontrivial result of this type for $\GL(3)\times \GL(2)$ Fourier coefficients with $\varphi(u)=-(\log u)/2\pi$. Recently, Lin--Sun \cite{LinSun} succeeded to treat the analytic twisted sum of $\GL(3)\times \GL(2)$ Fourier coefficients, and got an application to $\mathcal A(X,f)$ with $f=\Phi\times \phi$ under GRC. Here we follow Lin--Sun's formulation. See the introduction there for more discussions on this topics.

\begin{remark}
  In the case $f=1\boxplus (\Phi\times \phi)$ with degree $d=7$, under GRC for $\Phi$ and  $\phi$, the general theorem (see e.g. \cite{FI2005}) will give us
  \[
    \mathcal A(X,1\boxplus (\Phi\times \phi)) = L(1,\Phi\times \phi) \, X
    + O_{\Phi,\phi}(X^{3/4+o(1)}).
  \]
  Our method in this paper can also be applied to this case, and we can show
  \[
    \mathcal A(X,1\boxplus (\Phi\times \phi)) = L(1,\Phi\times \phi) \, X + O_{\Phi,\phi}(X^{3/4-\delta''+o(1)}),
  \]
  for some small positive $\delta''$, under GRC. To prove this, one needs to extend \cite[Theorem 1.1]{LinSun} to the case $N\geq t^{3+\varepsilon}$. The proof is similar (but more complicated), so we don't include it in this paper.
\end{remark}


%
%
%
%
%


\medskip
The plan of this paper is as follows.
In \S\ref{sec:preliminaries}, we recall some results on L-functions, the Voronoi summation formula, and the Duke--Friedlander--Iwaniec
delta method.
In \S\ref{sec:dual}, we apply smoothing, Mellin transform and the stationary phase to reduce to the dual sums. This gives a proof of Theorem \ref{thm:1+3} by assuming Theorem \ref{thm:3}.
Then we prove Theorem \ref{thm:RS} in \S \ref{sec:proofRS}.
Finally, in \S\ref{sec:proof3}, we proof Theorem \ref{thm:3}.
The method is relatively standard now thanks to Munshi and his collaborators. In \S\ref{subsec:delta}, we apply the delta method. In \S\ref{subsec:summation_formula}, we use the summation formulas to get the dual sums.
We first need to analyse the integrals in \S\ref{subsec:integrals}, and then apply Cauchy and Poisson in the generic case in \S\ref{subsec:Cauchy} and \S\ref{subsec:Poisson}.

\medskip
\textbf{Notation.}
Throughout the paper, $\varepsilon$ is an arbitrarily small positive number;
all of them may be different at each occurrence.
The weight functions $U,\ V,\ W$ may also change at each occurrence.
As usual, $e(x)=e^{2\pi i x}$.

\section{Preliminaries}\label{sec:preliminaries}

\subsection{Maass forms and L-functions} \label{subsec:L-functions}

Let $\Phi$ be a Hecke--Maass form of type $(\nu_1,\nu_2)$ for $\SL(3,\mathbb{Z})$ with the normalized Fourier coefficients $A(m,n)$ such that $A(1,1)=1$.
The Langlands parameters are defined as $\alpha_1 = -\nu_1 - 2\nu_2 +1$,
$\alpha_2 = -\nu_1 +\nu_2$, and $\alpha_3 = 2\nu_1 +\nu_2 -1$.
The Ramanujan--Selberg conjecture predicts that $\Re(\alpha_i) = 0$.
From the work of Jacquet and Shalika \cite{JacquetShalika}, we know (at least) that $|\Re(\alpha_i)| < 1/2$.
It is well known that by standard properties of the Rankin--Selberg L-function we have the Ramanujan conjecture on average
\begin{equation}\label{eqn:RS3}
  \mathop{\sum_{m\geq1}\sum_{n\geq1}}_{m^2 n \leq N} |A(m,n)|^2
  \ll_\Phi N^{1+\varepsilon}.
\end{equation}
The L-function associated with $\Phi$ is given by
$L(s,\Phi) = \sum_{n=1}^{\infty} A(1,n) n^{-s}$ in the domain $\Re(s)>1$. It extends to an entire function and satisfies the following functional equation
\[
  \gamma(s,\Phi) L(s,\Phi) = \gamma(1-s,\tilde\Phi) L(1-s,\tilde{\Phi}), \quad \gamma(s,\Phi) = \prod_{j=1}^{3} \pi^{-s/2} \Gamma\left(\frac{s-\alpha_j}{2}\right).
\]
Here $\tilde\Phi$ is the dual form having Langlands parameters $(-\alpha_3,-\alpha_2,-\alpha_1)$ and the Fourier coefficients $\overline{A(m,n)}$. See more information in Goldfeld \cite{goldfeld2006automorphic}.
%

\subsection{Approximate functional equation}\label{subsec:AFE}

Let $L(s,f)$ be an L-function of degree $d$ in the sense of \cite[\S5.1]{IwaniecKowalski2004analytic}. More precisely, we have
\[
  L(s,f) = \sum_{n\geq1} \frac{\lambda_f(n)}{n^s} = \prod_p \prod_{j=1}^d \left(1-\frac{\alpha_j(p)}{p^s}\right)^{-1},
\]
with $\lambda_f(1)=1$, $\lambda_f(n)\in\mathbb{C}$, $\alpha_j(p)\in\mathbb{C}$, such that the series and Euler products are absolutely convergent for $\Re(s)>1$.
The sequence $\{\lambda_f(n)\}_{n\geq1}$ are called coefficients of $L(s,f)$. We assume $|\alpha_j(p)|<p$ for all $p$.
There exist an integer $q(f)\geq1$ and a gamma factor
\[
  \gamma(s,f) = \pi^{-ds/2} \prod_{j=1}^{d} \Gamma\left(\frac{s-\kappa_j}{2}\right)
\]
with $\kappa_j\in\mathbb{C}$ and $\Re(\kappa_j)<1/2$ such that the complete L-function
\[
  \Lambda(s,f) = q(f)^{s/2} \gamma(s,f) L(s,f)
\]
admits an analytic continuation to a meromorphic function for $s\in\mathbb{C}$ of order 1 with poles at most at $s=0$ and $s=1$. Moreover we assume $L(s,f)$ satisfies the functional equation
\[
  \Lambda(s,f) = \varepsilon(f) \Lambda(1-s,\bar f),
\]
where $\bar f$ is the dual of $f$ for which $\lambda_{\bar{f}}(n) = \overline{\lambda_f(n)}$, $\gamma(s,\bar f)=\overline{\gamma(\bar s,f)}$, $q(\bar f)=q(f)$, and $\varepsilon(f)$ is the root number of $L(s,f)$ satisfying that $|\varepsilon(f)|=1$.
We further assume that $\lambda_f(n)$'s satisfy the Ramanujan bound on average, that is,
\begin{equation}\label{eqn:averagebound}
  \sum_{n\leq x} |\lambda_f(n)| \ll x^{1+\varepsilon}.
\end{equation}

As $|t|\to\infty$, Stirling's formula gives
\begin{align*}
  \Gamma(\sigma+it) = \sqrt{2\pi}  |t|^{\sigma-\frac12+it}
  \exp\big( -\tfrac{\pi}{2}|t| - it + i\text{sgn}(t)  \tfrac{\pi}{2}(\sigma-\tfrac12)\big)
  \big(1 +O(|t|^{-1}) \big).
\end{align*}
Hence for $|t|\in[T,2T]$ with $T$ large, and $\varepsilon\leq \Re(w)\ll 1 $, $|\Im(w)|\ll T^\varepsilon$, and $\kappa_j\ll1$, we have
\begin{equation}\label{eqn:Stirling}
  \begin{split}
  \frac{\Gamma(\frac{1/2+it+w-\kappa_j}{2})}{\Gamma(\frac{1/2+it-\kappa_j}{2})} &= \left(\frac{|t|}{2}\right)^{w/2} e^{i\pi w/2} \big(1+O(T^{-1}) \big), \\
  \frac{\Gamma(\frac{1/2-it-\overline{\kappa_j}}{2})}{\Gamma(\frac{1/2+it-\kappa_j}{2})} &= \left(\frac{|t|}{2e}\right)^{-it} \big(1+O(T^{-1}) \big).
\end{split}
\end{equation}
For $t\in[T,2T]$, by the approximate functional equation \cite[\S5.2]{IwaniecKowalski2004analytic} we have
\begin{multline*}
  L(1/2+it,f)  =  \sum_{n\geq1} \frac{\lambda_f(n)}{n^{1/2+it}} \frac{1}{2\pi i} \int_{(2)} \frac{\gamma(1/2+it+w,f)}{\gamma(1/2+it,w)} \frac{q^{w/2}}{n^w} \frac{G(w)}{w} \dd w \\
  + \varepsilon(f) q^{-it} \sum_{n\geq1} \frac{\overline{\lambda_f(n)}}{n^{1/2-it}} \frac{1}{2\pi i} \int_{(2)} \frac{\gamma(1/2-it+w,\bar f)}{\gamma(1/2+it,f)} \frac{q^{w/2}}{n^w} \frac{G(w)}{w} \dd w + O(T^{-A}),
\end{multline*}
where $G(w)=e^{w^2}$.
We can move the line of integration to $\Re(w)=\varepsilon$ and truncate at $|\Im(s)|\leq T^{\varepsilon}$ with a negligible error term.
By Stirling's formula \eqref{eqn:averagebound} and \eqref{eqn:Stirling}, we can truncate the $n$-sum at $n\ll T^{d/2+\varepsilon}$ for the first sum and at $n\ll T^{d/2+\varepsilon}$ for the second sum above with a negligible error.
Hence by a smooth partition of unity, we prove the following lemma.

\begin{lemma}\label{lemma:AFE}
With notation as above, we have
\begin{multline}\label{eqn:AFE}
  L(1/2+it,f) = \frac{1}{2\pi i} \int_{\varepsilon-i T^{\varepsilon}}^{\varepsilon+iT^\varepsilon} \sum_{\substack{N\leq T^{d/2+\varepsilon}\\ N \; \textrm{dyadic}}} \sum_{n\geq1} \frac{\lambda_f(n)}{n^{1/2+it+w}} V\left(\frac{n}{N}\right)  \left(\frac{\pi t}{2}\right)^{dw/2} e^{i\pi dw/2} q^{w/2} \frac{G(w)}{w} \dd w  \\
  + \frac{\varepsilon(f)}{2\pi i} \int_{\varepsilon-i T^{\varepsilon}}^{\varepsilon+iT^\varepsilon} \sum_{\substack{N\leq T^{d/2+\varepsilon}\\ N \; \textrm{dyadic}}} \sum_{n\geq1}  \frac{\overline{\lambda_f(n)}}{n^{1/2-it+w}}  V\left(\frac{n}{N}\right)  \left(\frac{t}{2\pi e}\right)^{-idt}  \left(\frac{t}{2\pi}\right)^{dw/2} e^{i\pi dw/2} q^{w/2-it} \frac{G(w)}{w} \dd w  \\
  + O\left( T^{d/4-1+\varepsilon}\right),
\end{multline}
where $V$ is a fixed smooth function with $\supp V \subset (1/2,1)$.
\end{lemma}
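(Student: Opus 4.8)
The plan is to derive the approximate functional equation of Lemma~\ref{lemma:AFE} from the standard one in \cite[\S5.2]{IwaniecKowalski2004analytic} by inserting the asymptotics of the gamma factors and organizing the $n$-sum dyadically. Concretely, I would start from the displayed approximate functional equation just above the lemma, in which $L(1/2+it,f)$ is written as a sum of two terms, each a Dirichlet series twisted by a contour integral $\frac{1}{2\pi i}\int_{(2)}(\cdots)\frac{G(w)}{w}\,\dd w$ with $G(w)=e^{w^2}$. The first step is to move the contour from $\Re(w)=2$ to $\Re(w)=\varepsilon$; there is no pole crossed since the only pole of the integrand in $0<\Re(w)<2$ is at $w=0$, which is cancelled --- wait, it is precisely the pole at $w=0$ that produces the main terms, so I must be careful: one keeps $\Re(w)=\varepsilon>0$ so that no pole is crossed at all and $L(1/2+it,f)$ equals exactly the two integrals (the main-term pole at $w=0$ stays to the left). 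Because $G(w)=e^{w^2}$ decays rapidly on vertical lines, I can then truncate each integral at $|\Im(w)|\leq T^\varepsilon$ with negligible error.

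Next I would substitute the Stirling asymptotics \eqref{eqn:Stirling} into the ratios of gamma factors. For the first term, $\gamma(1/2+it+w,f)/\gamma(1/2+it,f)$ is a product over $j=1,\dots,d$ of ratios $\Gamma(\tfrac{1/2+it+w-\kappa_j}{2})/\Gamma(\tfrac{1/2+it-\kappa_j}{2})$, each of which equals $(|t|/2)^{w/2}e^{i\pi w/2}(1+O(T^{-1}))$ by the first line of \eqref{eqn:Stirling}; multiplying the $d$ factors gives $(\pi t/2)^{dw/2}e^{i\pi dw/2}$ after absorbing the $\pi^{-dw/2}$ from the explicit $\pi$-powers in $\gamma$, with total error $O(T^{-1})$ relative. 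For the second term one additionally needs the ``swap'' ratio $\Gamma(\tfrac{1/2-it-\overline{\kappa_j}}{2})/\Gamma(\tfrac{1/2+it-\kappa_j}{2})$, which by the second line of \eqref{eqn:Stirling} contributes $(|t|/2e)^{-it}$ per factor, i.e.\ $(t/2\pi e)^{-idt}$ in total (again up to the $\pi$-powers), producing the oscillatory factor displayed in the lemma; one also picks up $q^{-it}$ from the functional equation and $q^{w/2}$ from the conductor, and $\varepsilon(f)$ out front. The relative error $O(T^{-1})$ from Stirling, multiplied against the trivially bounded remaining integral and Dirichlet sum (of size $O(T^{d/4+\varepsilon})$ by \eqref{eqn:averagebound} and the truncation $n\ll T^{d/2+\varepsilon}$), contributes $O(T^{d/4-1+\varepsilon})$, matching the stated error term.

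Then I would handle the truncation of the $n$-sums: on the line $\Re(w)=\varepsilon$, Stirling gives $|\gamma(1/2+it+w,f)/\gamma(1/2+it,f)| \asymp T^{d\varepsilon/2}$ while the factor $1/n^{\varepsilon}$ together with $G(w)/w$ forces rapid decay once $n \gg (q t^d)^{1/2+\varepsilon}\ll T^{d/2+\varepsilon}$ (since $q=q(f)$ is fixed); more precisely, shifting $w$ further to the right to $\Re(w)=A$ large shows the tail $n> T^{d/2+\varepsilon}$ contributes negligibly, using \eqref{eqn:averagebound} for the Dirichlet sum. Finally a smooth dyadic partition of unity $1 = \sum_{N \text{ dyadic}} V(n/N)$ with $\supp V\subset(1/2,1)$ splits each $n$-sum into $O(\log T)$ pieces $N\leq T^{d/2+\varepsilon}$, giving exactly the form \eqref{eqn:AFE}.

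The main obstacle --- really the only delicate point --- is bookkeeping the gamma-factor asymptotics uniformly: one must check that \eqref{eqn:Stirling} applies with the stated uniformity in $w$ (i.e.\ for $\varepsilon\leq\Re(w)\ll1$ and $|\Im(w)|\ll T^\varepsilon$) and in the shifts $\kappa_j\ll1$, and that raising each relative error $(1+O(T^{-1}))$ to the $d$-th power and integrating over the truncated contour does not inflate the error beyond $O(T^{d/4-1+\varepsilon})$; this is routine but requires care that the implied constants depend only on $f$ (through $d$, $q$, and the $\kappa_j$). Everything else --- contour shift, exponential decay of $G$, dyadic partition --- is standard.
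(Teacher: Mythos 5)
Your proposal is correct and follows essentially the same route as the paper's (one-paragraph) proof: start from the displayed approximate functional equation of Iwaniec--Kowalski, shift the $w$-contour to $\Re(w)=\varepsilon$ (no pole crossed, since the pole at $w=0$ that would produce the balanced AFE is deliberately left to the left of the contour), truncate $|\Im(w)|\leq T^{\varepsilon}$ using the super-exponential decay of $G(w)=e^{w^2}$, substitute \eqref{eqn:Stirling} for the gamma ratios, truncate the $n$-sum at $n\ll T^{d/2+\varepsilon}$ using \eqref{eqn:averagebound} plus a further rightward contour shift, and finish with a smooth dyadic partition of unity. Your explicit bookkeeping that the relative Stirling error $O(T^{-1})$ times the trivial size $O(T^{d/4+\varepsilon})$ of the remaining Dirichlet polynomial (via partial summation against \eqref{eqn:averagebound}) produces the stated $O(T^{d/4-1+\varepsilon})$ is exactly the intended accounting. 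The only quibble is cosmetic: absorbing the $\pi^{-dw/2}$ from the definition of $\gamma(s,f)$ into $(|t|/2)^{dw/2}$ actually yields $(t/(2\pi))^{dw/2}$ rather than the $(\pi t/2)^{dw/2}$ printed in the lemma; this looks like a typo in the paper, and since $\Re(w)=\varepsilon$ and $|\Im(w)|\leq T^{\varepsilon}$ both variants are $T^{O(\varepsilon)}$ and harmless for the error bound, so your argument is unaffected.
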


\begin{remark}
  We can obtain a better error term in the above approximate functional equation by using Stirling's formula with better error term. Since \eqref{eqn:AFE} is good enough for our purpose in this paper, we don't do it here.
\end{remark}

\subsection{Voronoi summation formula} \label{subsec:VSF}

Let $\psi$ be a smooth compactly supported function on $(0,\infty)$,
and let $\tilde{\psi}(s):=\int_{0}^{\infty}\psi(x)x^s\frac{\dd x}{x}$
be its Mellin transform.
For $\sigma>5/14$, we define
\begin{equation}\label{eqn:Psi}
    \Psi^{\pm}(z) := z \frac{1}{2\pi i} \int_{(\sigma)} (\pi^3z)^{-s} \gamma^\pm(s) \tilde{\psi}(1-s)\dd s ,
\end{equation}
with
\begin{equation}\label{eqn:gamma^pm}
  \gamma^\pm(s) := \prod_{j=1}^{3}
    \frac{\Gamma\left(\frac{s+\alpha_j}{2}\right)} {\Gamma\left(\frac{1-s-\alpha_j}{2}\right)}
    \pm \frac{1}{i} \prod_{j=1}^{3}
    \frac{\Gamma\left(\frac{1+s+\alpha_j}{2}\right)} {\Gamma\left(\frac{2-s-\alpha_j}{2}\right)},
\end{equation}
where $\alpha_j$ are the Langlands parameters of $\phi$ as above.
Note that changing $\psi(y)$ to $\psi(y/N)$ for a positive real number $N$ has the effect of
changing $\Psi^\pm(z)$ to $\Psi^\pm(zN)$.
The Voronoi formula on $\GL(3)$ was first proved by Miller--Schmid~\cite{miller2006automorphic}.
The present version is due to Goldfeld--Li~\cite{goldfeld2006voronoi} with slightly renormalized variables (see Blomer \cite[Lemma 3]{blomer2012subconvexity}).
\begin{lemma}\label{lemma:VSF}
  Let $c,d,\bar{d}\in\mathbb Z$ with $c\neq0$, $(c,d)=1$, and $d\bar{d}\equiv1\pmod{c}$.
  Then we have
  \begin{equation*}
    \begin{split}
         \sum_{n=1}^{\infty} A(1,n)e\left(\frac{n\bar{d}}{c}\right)\psi(n)
         = \frac{c\pi^{3/2}}{2} \sum_{\pm} \sum_{n_1|c} \sum_{n_2=1}^{\infty}
              \frac{A(n_2,n_1)}{n_1n_2} S\left(d,\pm n_2;\frac{c}{n_1}\right)
              \Psi^{\pm}\left(\frac{n_1^2n_2}{c^3}\right),
    \end{split}
  \end{equation*}
  where $S(a,b;c) := \mathop{{\sum}^*}_{d(c)} e\left(\frac{ad+b\bar{d}}{c}\right)$ is the classical Kloosterman sum.
\end{lemma}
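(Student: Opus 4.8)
The plan is to deduce this from the functional equation of the additively twisted $\GL(3)$ Dirichlet series, which is the standard route; in the paper itself I would simply cite Miller--Schmid \cite{miller2006automorphic}, Goldfeld--Li \cite{goldfeld2006voronoi} and Blomer \cite{blomer2012subconvexity} and verify that the integral transform matches \eqref{eqn:Psi}--\eqref{eqn:gamma^pm}. For a self-contained derivation, write $a=\bar d$, so $(a,c)=1$, and apply Mellin inversion $\psi(n)=\frac{1}{2\pi i}\int_{(\sigma_0)}\tilde\psi(s)\,n^{-s}\,\dd s$ with $\sigma_0>1$. Interchanging sum and integral, which is legitimate by \eqref{eqn:RS3} together with the super-polynomial decay of $\tilde\psi$ in vertical strips, gives
\[
  \sum_{n\geq1} A(1,n)\,e\left(\frac{na}{c}\right)\psi(n)
  =\frac{1}{2\pi i}\int_{(\sigma_0)}\tilde\psi(s)\,D_a(s)\,\dd s,
  \qquad
  D_a(s):=\sum_{n\geq1}\frac{A(1,n)\,e(na/c)}{n^{s}}.
\]

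The core step is the functional equation of $D_a(s)$. Here one first disentangles the part of $n$ that shares prime factors with $c$: using the $\GL(3)$ Hecke multiplicativity of $A(m,n)$ one rewrites the additive twist as a sum over divisors $n_1\mid c$ of untwisted Dirichlet series in the dual coefficients $A(n_2,n_1)$, each weighted by a Kloosterman sum $S(d,\pm n_2;c/n_1)$ -- this is exactly where the double sum and the factor $1/(n_1 n_2)$ originate, and it is morally the content of the local functional equations at the primes dividing $c$. Applying then the functional equation $\gamma(s,\Phi)L(s,\Phi)=\gamma(1-s,\tilde\Phi)L(1-s,\tilde\Phi)$ recalled in \S\ref{subsec:L-functions} sends $s\mapsto 1-s$, and splitting the archimedean ratio of $\Gamma$-factors into its even and odd parts under $s\mapsto 1-s$ produces precisely the kernel $\gamma^\pm(s)$ of \eqref{eqn:gamma^pm}, the power $(\pi^3 z)^{-s}$ with $z=n_1^2 n_2/c^3$, and the overall constant $c\pi^{3/2}/2$.

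Substituting this back into the Mellin integral and shifting the contour to $\Re(s)=1-\sigma_0$ (unobstructed, since $L(s,\Phi)$ is entire), the change of variable $s\mapsto 1-s$ turns $\tilde\psi(s)$ into $\tilde\psi(1-s)$ and identifies the remaining $s$-integral with $\Psi^{\pm}(n_1^2 n_2/c^3)$ as defined in \eqref{eqn:Psi}. Absolute convergence of the resulting triple sum and of $\Psi^\pm$ -- for $\sigma>5/14$, where the factors $\Gamma((s+\alpha_j)/2)$ are holomorphic by the available bound on $\Re(\alpha_j)$ -- follows from Stirling applied to $\gamma^\pm$, the Weil bound for the Kloosterman sums, and \eqref{eqn:RS3}. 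The main obstacle is precisely the second step: converting the single additive character $e(na/c)$ into the dual double sum over $n_1\mid c$ and $n_2\geq1$ with coefficients $A(n_2,n_1)/(n_1 n_2)$, i.e. establishing the twisted functional equation with the correct archimedean factor $\gamma^\pm$. Since this (in the present normalization) is carried out in the references cited above, I would invoke it and only check the normalization of $\Psi^\pm$.
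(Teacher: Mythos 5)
Your proposal matches the paper: the paper's ``proof'' of this lemma is simply a citation to Miller--Schmid, Goldfeld--Li, and Blomer (Lemma 3 there) for this particular normalization, which is exactly your opening sentence. Your supplementary sketch of the Mellin-inversion-plus-twisted-functional-equation route is a fair overview of what those references do, with the small caveat that the Kloosterman sum and the dual coefficients $A(n_2,n_1)$ do not arise purely from Hecke multiplicativity as you suggest (the actual mechanism in Goldfeld--Li is a reciprocity/additive-character rearrangement) --- but since you explicitly defer to the cited literature for that step, this imprecision does not affect the soundness of the proposal.
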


The function $\Psi^{\pm}(y)$  has the following properties.

\begin{lemma}\label{lemma:Psi=M+O}
  Suppose $\psi(y)$ is a smooth function, compactly supported on $[N,2N]$.
  Let $\Psi^\pm(z)$ be defined as in \eqref{eqn:Psi}.
  Then for any fixed integer $L\geq1$, and $zN\gg1$, we have
  \[
    \Psi^\pm(z) = z\int_0^\infty \psi(y) \sum_{\ell=1}^{L} \frac{\gamma_\ell}{(zy)^{\ell/3}}
    e\left(\pm3(zy)^{1/3}\right) \dd y + O\left((zN)^{1-L/3}\right),
  \]
  where $\gamma_\ell$ are constants depending only
  on $\alpha_1,\alpha_2,\alpha_3$, and $L$.
\end{lemma}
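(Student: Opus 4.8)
The plan is to separate the two variables, reducing the statement to a one-dimensional asymptotic expansion for the inverse Mellin transform of the archimedean factor $\gamma^\pm$, and then to extract that expansion by stationary phase. I would first unfold the Mellin transform: since $\tilde\psi(1-s)=\int_0^\infty\psi(y)\,y^{-s}\,\dd y$, substituting into \eqref{eqn:Psi} and interchanging the $y$-integral with the $s$-contour --- legitimate because $\psi\in C_c^\infty(0,\infty)$, so $\tilde\psi(1-s)$ is entire with faster-than-polynomial decay on $\Re(s)=\sigma$ while $\gamma^\pm(s)$ grows only polynomially there --- gives
\[
  \Psi^\pm(z)=z\int_0^\infty\psi(y)\,\Upsilon^\pm(\pi^3 zy)\,\dd y,\qquad
  \Upsilon^\pm(x):=\frac{1}{2\pi i}\int_{(\sigma)}x^{-s}\gamma^\pm(s)\,\dd s .
\]
Because $\supp\psi\subset[N,2N]$, the argument $\pi^3 zy$ runs over $[\pi^3 zN,2\pi^3 zN]$, so it is $\asymp zN\gg1$ throughout, and the lemma reduces to proving that, for real $\xi\gg1$,
\[
  \Upsilon^\pm(\pi^3\xi)=\sum_{\ell=1}^{L}\frac{\gamma_\ell}{\xi^{\ell/3}}\,e\!\big(\pm 3\xi^{1/3}\big)+O\!\big(\xi^{-L/3}\big);
\]
multiplying by $z\psi(y)$, setting $\xi=zy$, and integrating over $y\in[N,2N]$ then produces the claimed identity, the error term being $\ll z\cdot N\cdot(zN)^{-L/3}=(zN)^{1-L/3}$.

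For the one-variable asymptotics I would insert Stirling's formula into the six gamma functions in \eqref{eqn:gamma^pm}. Writing $s=\sigma+i\tau$ and using $\alpha_1+\alpha_2+\alpha_3=0$, one finds that $\gamma^\pm(\sigma+i\tau)$ is of polynomial size $\asymp(1+|\tau|)^{3\sigma-3/2}$ with an oscillatory phase, so that the integrand $\xi^{-s}\gamma^\pm(s)$ has phase $\asymp-\tau\log\xi+3\tau\log|\tau|+(\text{linear in }\tau)$ and a single stationary point at $|\tau|=\tau_0\asymp\xi^{1/3}$, where the second derivative is $\asymp\xi^{-1/3}$. Since the $\sigma$-dependence of the amplitude $\xi^{-\sigma}\tau_0^{3\sigma-3/2}\asymp\xi^{-1/2}$ at the stationary point cancels, the method of stationary phase yields a leading term of size $\xi^{-1/2}\cdot\xi^{1/6}=\xi^{-1/3}$ oscillating like $e(\pm\,\mathrm{const}\cdot\xi^{1/3})$; iterating the stationary-phase expansion (equivalently, carrying Stirling's series to higher order) produces the full descending series in powers of $\xi^{-1/3}$, with error $O(\xi^{-(L+1)/3})$ after $L$ terms, while the portion $|\tau|\ll1$ contributes only $O(\xi^{-\sigma})$, which is negligible after moving the contour slightly to the right when $L\geq2$ (permissible, as $\gamma^\pm$ is holomorphic for $\Re(s)$ large). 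The superscript $\pm$ is decided by which of the two stationary points $\pm\tau_0$ actually governs $\Upsilon^\pm$: using the reflection and duplication formulas one checks that $\gamma^\pm(s)$ decays exponentially along one of the two vertical directions --- the one carrying the ``wrong'' oscillation for that sign --- so that precisely the factor $e(\pm3\xi^{1/3})$ survives. As an alternative to this direct computation one may, by the same identities, identify $\Upsilon^\pm$ with a Meijer $G$-function (a ${}_0F_2$-type Bessel function) and invoke its classical large-argument expansion.

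The main obstacle is not conceptual but the care this last step demands: making precise sense of the only conditionally convergent integral defining $\Upsilon^\pm$ (for instance by an integration-by-parts regularization in $s$ before applying stationary phase); controlling $\gamma^\pm(\sigma+i\tau)$ uniformly in $\sigma$ and $\tau$, including the exponential decay along one direction that pins down the sign of the oscillation; and carrying the stationary-phase expansion out to order $L$ while correctly determining the constants $\gamma_\ell$, so as to confirm simultaneously the exponents $\ell/3$ of the descending powers and the precise oscillatory factor $e(\pm3(zy)^{1/3})$, uniformly for $zN\gg1$.
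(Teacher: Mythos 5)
Your sketch is correct, and it reconstructs essentially the argument that appears in the references the paper cites for this lemma (Li, Lemma~6.1, and Blomer, Lemma~6); the paper itself offers no proof beyond those citations. Unfolding $\tilde\psi(1-s)=\int_0^\infty\psi(y)y^{-s}\,\dd y$, swapping the $y$- and $s$-integrals, and then extracting the descending asymptotic series for the archimedean kernel $\frac{1}{2\pi i}\int_{(\sigma)}x^{-s}\gamma^{\pm}(s)\,\dd s$ via Stirling and stationary phase (with the single critical point at $|\tau|\asymp x^{1/3}$) is exactly the standard route, and your bookkeeping of the exponents --- amplitude $\asymp\xi^{-1/2}$, $h''\asymp\xi^{-1/3}$, hence leading size $\xi^{-1/3}$ and phase $e(\pm3\xi^{1/3})$ --- is right. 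The one place where your account is a bit informal is the claim that $\gamma^\pm(s)$ ``decays exponentially along one of the two vertical directions'': as you noted, $\gamma^\pm(\sigma+i\tau)$ is of polynomial size uniformly in $\tau$. What actually selects the sign is that, after applying the reflection and duplication formulas, $\gamma^\pm(s)$ decomposes (up to polynomial factors) into a combination of $e^{i\pi s/2}$- and $e^{-i\pi s/2}$-type pieces, and in the relevant combination only one survives on each half-line; equivalently, one identifies the kernel with a Meijer $G$-function and invokes its classical one-sided expansion, the alternative you mention. With that precision, the argument is complete.
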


\begin{proof}
  See Li~\cite[Lemma 6.1]{li2009central} and Blomer~\cite[Lemma 6]{blomer2012subconvexity}.
\end{proof}


\subsection{The delta method}
There are two oscillatory factors contributing to the convolution sums. Our method is based on separating these oscillations using the circle method. In the present situation we will use a version of the delta method of Duke, Friedlander and Iwaniec. More specifically we will use the expansion (20.157) given in  \cite[\S20.5]{IwaniecKowalski2004analytic}. Let $\delta:\mathbb{Z}\rightarrow \{0,1\}$ be defined by
$$
\delta(n)=\begin{cases} 1&\text{if}\;\;n=0;\\
0&\text{otherwise}.\end{cases}
$$
We seek a Fourier expansion which matches with $\delta(n)$.
\begin{lemma}\label{lemma:delta}
  Let $Q$ be a large positive number. Then we have
  \begin{align}\label{eqn:delta-n}
    \delta(n)=\frac{1}{Q}\sum_{1\leq q\leq Q} \;\frac{1}{q}\; \sideset{}{^\star}\sum_{a\bmod{q}}e\left(\frac{na}{q}\right)
    \int_\mathbb{R} g(q,x) e\left(\frac{nx}{qQ}\right)\mathrm{d}x,
  \end{align}
  where 
  $g(q,x)$ is a weight function satisfies that
  \begin{equation}\label{eqn:g-h}
    g(q,x)=1+O\left(\frac{Q}{q}\left(\frac{q}{Q}+|x|\right)^A\right),
    \quad
     g(q,x)\ll |x|^{-A}, \quad \textrm{for any $A>1$},
  \end{equation}
  and
  \begin{equation}\label{eqn:g^(j)}
    \frac{\partial^j}{\partial x^j} g(q,x) \ll  |x|^{-j} \min(|x|^{-1},Q/q) \log Q, \quad j\geq1.
  \end{equation}
  Here the $\star$ on the sum indicates that the sum over $a$ is restricted by the condition $(a,q)=1$.
\end{lemma}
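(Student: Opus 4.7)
The plan is to derive this identity by an explicit construction of the weight $g(q,x)$, following the DFI circle method as developed in \cite[\S 20.5]{IwaniecKowalski2004analytic}. Since the statement is essentially a repackaging of formula (20.157) of loc.\ cit., the proof is a verification that a carefully chosen $g(q,x)$ does the job; the real content lies in producing an expansion of $\delta(n)$ with both a modest outer range ($q \leq Q$) and analytically well-controlled weight.

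First, I would start from an auxiliary smooth compactly supported function $w:(0,\infty)\to\mathbb{R}$ (say, $\supp w\subset[1,2]$, suitably normalized) and use the combinatorial identity
$$\sum_{q\geq 1}\frac{1}{q}\bigl(w(q/Q)-w(|n|/(qQ))\bigr)=c\,\delta(n)$$
for $|n|\ll Q^2$. This is valid because for $n\neq 0$ the two bump profiles are designed to cancel exactly after the $q$-summation (the classical pairing $q\leftrightarrow |n|/q$), while for $n=0$ only the first term survives and the constant $c$ can be computed explicitly. This is the same mechanism underlying Heath--Brown's delta method.

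Next, I would reduce the $q$-sum to additive characters. Using Fourier inversion on $\mathbb{Z}/q\mathbb{Z}$ and the Ramanujan sum evaluation $\sideset{}{^\star}\sum_{a\bmod q} e(an/q)=c_q(n)=\sum_{d\mid (q,n)} d\,\mu(q/d)$, the implicit divisibility conditions in the step above are recast as additive character sums. The difference of bump profiles is then written as a single inverse Fourier integral against $e(nx/(qQ))$; after absorbing the normalizations and truncating the outer sum at $q\leq Q$, the resulting kernel in $x$ is precisely $g(q,x)/Q$. The contribution from $q>Q$ is absorbed by rapid decay arising from the second bump $w(|n|/(qQ))$, which begins to cancel the first once $q$ exceeds $Q$.

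Finally, I would verify \eqref{eqn:g-h} and \eqref{eqn:g^(j)}. The main-term statement $g(q,x)=1+O(\cdot)$ comes from evaluating the constructed inverse Fourier transform near the origin and estimating the truncation error in terms of $Q/q$ and $|x|$; the rapid decay in $|x|$ follows from repeated integration by parts exploiting the smoothness of $w$. The derivative bound \eqref{eqn:g^(j)} reflects two regimes: for $|x|$ large, smoothness of $g$ wins and each derivative costs $|x|^{-1}$, whereas for $q$ small the effective bandwidth of $g$ in $x$ is of order $Q/q$, giving the $\min(|x|^{-1},Q/q)$ saving. The main obstacle is keeping all bounds uniform in both $q$ and $x$, particularly in the transitional range $q\sim Q$ and $|x|\sim Q/q$ where neither regime is dominant; this is delicate but routine and is carried out in detail in Iwaniec--Kowalski.
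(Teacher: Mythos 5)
Your proposal tries to rebuild the DFI expansion from scratch, but the foundation as written does not hold. The displayed identity $\sum_{q\geq 1}\frac{1}{q}\bigl(w(q/Q)-w(|n|/(qQ))\bigr)=c\,\delta(n)$ is false: the exact cancellation for $n\neq 0$ via the pairing $q\leftrightarrow |n|/q$ only exists when the sum is restricted to divisors $q\mid n$ (that restriction is precisely what the additive characters later detect); over all positive integers $q$ there is no such involution and the left side is in general nonzero. Relatedly, your treatment of the range of $q$ is incompatible with the statement being proved: \eqref{eqn:delta-n} is an exact identity with no error term, so you cannot "truncate the outer sum at $q\leq Q$" and absorb the tail into rapid decay. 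In the DFI/Iwaniec--Kowalski construction the finiteness of the $q$-range is exact, arranged by choosing the redundant bump supported so that the weight vanishes identically for $q>Q$; this is a structural feature of the construction, not an estimate.

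The second, and in context more serious, gap concerns the derivative bound \eqref{eqn:g^(j)}. Your two-regime heuristic ("smoothness wins for large $|x|$, bandwidth $Q/q$ for small $q$") is not a proof, and your claim that the uniform verification "is carried out in detail in Iwaniec--Kowalski" is off target: IK's (20.158)--(20.159) give only \eqref{eqn:g-h}, and \eqref{eqn:g^(j)} is exactly the part that has to be supplied. The paper does this by starting from the explicit representation $g(q,x)=\int_{\mathbb{R}}\Delta_q(u)f(u)e\bigl(-ux/(qQ)\bigr)\,\mathrm{d}u$, differentiating under the integral (which brings down $\bigl(-2\pi i u/(qQ)\bigr)^j$), and then integrating by parts $j$ and $j+1$ times using the bounds of \cite[Lemma 20.17]{IwaniecKowalski2004analytic} on $\Delta_q$ and its derivatives; the two integrations by parts are what produce the $\min(|x|^{-1},Q/q)$ factor. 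Without an explicit formula for your constructed kernel and quantitative control of the analogue of $\Delta_q^{(j)}$, the bound \eqref{eqn:g^(j)} does not follow from your outline. Note also that the paper's own proof of \eqref{eqn:delta-n} and \eqref{eqn:g-h} is simply a citation of \cite[eq. (20.157)--(20.159)]{IwaniecKowalski2004analytic}; the only genuinely new content of the lemma is the derivative estimate, which is precisely the step your proposal leaves unproved.
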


\begin{proof}
By \cite[eq. (20.157)]{IwaniecKowalski2004analytic}, we have
\begin{align*}
\delta(n)=\frac{1}{Q}\sum_{1\leq q\leq Q} \;\frac{1}{q}\; \sideset{}{^\star}\sum_{a\bmod{q}}e\left(\frac{na}{q}\right) \int_\mathbb{R}g(q,x) e\left(\frac{nx}{qQ}\right)\mathrm{d}x,
\end{align*}
for $n\in\mathbb Z$.
Here
\[
  g(q,x) = \int_{\mathbb{R}} \Delta_q(u) f(u) e\left(-\frac{ux}{qQ}\right) \dd u,
\]
with a smooth $f$ such that $\supp f\in[-Q^2/2,Q^2/2]$ and $f^{(j)}(u)\ll Q^{-2j}$, $j\geq0$,  and a function $\Delta_q(u)$ satisfies that
\cite[Lemma 20.17]{IwaniecKowalski2004analytic}
\begin{equation}\label{eqn:Delta_q}
  \Delta_q(u) \ll \frac{1}{(q+Q)Q} + \frac{1}{|u|+qQ},
  \quad
  \Delta_q^{(j)}(u) \ll \frac{1}{qQ} (|u|+qQ)^{-j}, \quad j\geq1.
\end{equation}
Note that here we take $N$ in \cite{IwaniecKowalski2004analytic} to be $Q^2/4$.
We recall the following two properties (see (20.158) and (20.159) of \cite{IwaniecKowalski2004analytic})
\footnote{There is a typo in \cite[eq. (20.158)]{IwaniecKowalski2004analytic}.}
\begin{equation*}
g(q,x) =1+h(q,x),\;\;\;\text{with}\;\;\;h(q,x) =O\left(\frac{Q}{q}\left(\frac{q}{Q}+|x|\right)^A\right),
\quad
 g(q,x)\ll |x|^{-A}
\end{equation*}
for any $A>1$.
In particular the second property implies that the effective range of the integral in \eqref{eqn:delta-n} is $[-Q^\varepsilon, Q^\varepsilon]$.

We will also need bounds of the derivatives of $g(q,x)$ with respect to $x$.
Note that
\[
  \frac{\partial^j}{\partial x^j} g(q,x) = \left(\frac{-2\pi i}{qQ}\right)^j  \int_{\mathbb{R}} \Delta_q(u) f(u) u^j e\left(-\frac{ux}{qQ}\right) \dd u
\]
By \eqref{eqn:Delta_q} and by $j$ and $j+1$ times repeated integration by parts, we have
\begin{equation*}
  \frac{\partial^j}{\partial x^j} g(q,x) \ll \min( |x|^{-j}  Q/q,  |x|^{-j-1} \log Q/q )
  \ll  |x|^{-j} \min(|x|^{-1},Q/q) \log Q.
\end{equation*}
This completes the proof of Lemma \ref{lemma:delta}.
\end{proof}

In applications of \eqref{eqn:delta-n}, we can first restrict to $|x|\ll Q^\varepsilon$. If $q\gg Q^{1-\varepsilon}$, then by \eqref{eqn:g^(j)} we get $ \frac{\partial^j}{\partial x^j} g(q,x) \ll Q^\varepsilon |x|^{-j} $, for any $j\geq1$. If $q\ll Q^{1-\varepsilon}$ and $Q^{-\varepsilon} \ll |x| \ll Q^\varepsilon$, then by \eqref{eqn:g^(j)} we also have $ \frac{\partial^j}{\partial x^j} g(q,x) \ll Q^\varepsilon |x|^{-j} $, for any $j\geq1$. Finally, if $q\ll Q^{1-\varepsilon}$ and $|x| \ll Q^{-\varepsilon}$, then by \eqref{eqn:g-h}, we get replace $g(q,x)$ by 1 with a negligible error term.
So in all cases, we can view $g(q,x)$ as a nice weight function.

\begin{remark}
  We can further prove
  \begin{align}\label{eqn:delta}
    \delta(n)=\frac{1}{Q}\sum_{1\leq q\leq Q} \;\frac{1}{q}\; \sideset{}{^\star}\sum_{a\bmod{q}}e\left(\frac{na}{q}\right) \int_\mathbb{R} w(q,x) e\left(\frac{nx}{qQ}\right)\mathrm{d}x + O(Q^{-A}),
  \end{align}
  where $w(q,x)$ is a weight function such that $\supp w(q,\cdot) \subset [-Q^\varepsilon,Q^\varepsilon]$ and  $\frac{\partial^j}{\partial x^j} w(q,x) \ll_j Q^\varepsilon (|x|^{-j}+Q^{j\varepsilon})$.
\end{remark}

\subsection{Oscillatory integrals}

Let $\mathcal{F}$ be an index set and $X=X_T:\mathcal{F}\rightarrow \mathbb{R}_{\geq1}$ be a function of $T\in\mathcal{F}$. A family of $\{w_T\}_{T\in\mathcal{F}}$ of smooth functions supported on a product of dyadic intervals in $\mathbb{R}_{>0}^d$ is called $X$-inert if for each $j=(j_1,\ldots,j_d) \in \mathbb{Z}_{\geq0}^d$ we have
\[
  \sup_{T\in\mathcal{F}} \sup_{(x_1,\ldots,x_d) \in \mathbb{R}_{>0}^d}
  X_T^{-j_1-\cdots -j_d} \left| x_1^{j_1} \cdots x_d^{j_d} w_T^{(j_1,\ldots,j_d)} (x_1,\ldots,x_d) \right|
   \ll_{j_1,\ldots,j_d} 1.
\]
We will use the following stationary phase lemma several times.

\begin{lemma}\label{lemma:stationary_phase}
  Suppose $w=w_T$ is a family of $X$-inert in $\xi$ with compact support on $[Z,2Z]$,
  so that $w^{(j)}(\xi) \ll (Z/X)^{-j}$.
  Suppose that on the support of $w$, $h=h_T$ is smooth and satisfies that
  $ h^{(j)}(\xi) \ll \frac{Y}{Z^j}$, for all $j\geq0.$
  Let
  \[
    I = \int_{\mathbb{R}} w(\xi) e^{i h(\xi)}  \dd \xi.
  \]
  \begin{enumerate}
  \item [(i)] If $h'(\xi) \gg \frac{Y}{Z}$ for all $\xi \in \supp w$. Suppose $Y/X \geq1$. Then
      $I \ll_A Z (Y/X)^{-A}$ for $A$ arbitrarily large.
  \item [(ii)] If $h''(\xi) \gg \frac{Y}{Z^2}$
  for all $\xi \in \supp w$, and there exists $\xi_0 \in\mathbb{R}$ such that $h'(\xi_0)=0$.
  Suppose that $Y/X^2 \geq 1$. Then
  \[
    I
    = \frac{e^{i h(\xi_0)}}{\sqrt{h''(\xi_0)}}  W_T(\xi_0) + O_A(Z(Y/X^2)^{-A}),
    \quad
    \textrm{for any $A>0$,}
  \]
  for some $X$-inert family of functions $W_T$ (depending on $A$) supported on $\xi_0\asymp Z$.
  \end{enumerate}
\end{lemma}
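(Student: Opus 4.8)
Both parts are standard manipulations with $X$-inert weights; I indicate the structure and leave the (routine but lengthy) uniformity checks.

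\smallskip
\emph{Part (i)} is non-stationary-phase integration by parts. Since $w$ has compact support, iterating $\int u\,e^{ih}\,\dd\xi=-\int\frac{\dd}{\dd\xi}\!\big(\frac{u}{ih'}\big)e^{ih}\,\dd\xi$ gives $I=\int_{\mathbb{R}}(\mathcal{D}^{A}w)(\xi)\,e^{ih(\xi)}\,\dd\xi$ with $\mathcal{D}u=-\frac{\dd}{\dd\xi}(u/ih')$. From $h'\gg Y/Z$ and $h^{(j)}\ll Y/Z^{j}$ one gets $(1/h')^{(j)}\ll(Z/Y)Z^{-j}$ by the Leibniz and Fa\`a di Bruno rules, and then, using the $X$-inertness of $w$, an easy induction shows $\mathcal{D}^{A}w$ is $\ll_{A}(X/Y)^{A}$ times an $X$-inert function on $[Z,2Z]$. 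As $Y/X\ge1$, estimating the last integral trivially over an interval of length $\ll Z$ gives $I\ll_{A}Z(Y/X)^{-A}$.

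\smallskip
\emph{Part (ii).} If $\xi_{0}$ lies outside a fixed neighbourhood of $\supp w$ the claim reduces to Part (i) (then $|h'|\gg Y/Z$ on $\supp w$), so assume $\xi_{0}\asymp Z$. Translating $\xi\mapsto\xi_{0}+\xi$ and rescaling $\xi\mapsto Z\xi$ reduces to: $Z=1$, stationary point at $0$, $w$ is $X$-inert on an interval of length $\ll1$, the phase $\phi$ satisfies $\phi(0)=\phi'(0)=0$, $\phi''\asymp Y$, $\phi^{(j)}\ll Y$, $Y\ge X^{2}$, and the prefactor of $I$ becomes $Z\,e^{ih(\xi_{0})}$. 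Writing $\phi(\xi)=\xi^{2}g(\xi)$ with $g(\xi)=\int_{0}^{1}(1-\theta)\phi''(\theta\xi)\,\dd\theta\asymp Y$ (Taylor with integral remainder), the Morse substitution $u=\xi\sqrt{g(\xi)}$ is a smooth change of variables with $u'\asymp\sqrt{Y}$ and $u^{(j)}\ll\sqrt{Y}$, under which $\phi=u^{2}$; it turns $I$ into $Z\,e^{ih(\xi_{0})}\int_{\mathbb{R}}\widetilde w(u)\,e^{iu^{2}}\,\dd u$, where $\widetilde w(u)=w(\xi(u))\,\xi'(u)$ is supported on $|u|\ll\sqrt{Y}$ and satisfies $\widetilde w^{(m)}(u)\ll Y^{-1/2}(X/\sqrt{Y})^{m}$.

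\smallskip
Now split $\widetilde w=\widetilde w_{0}+\widetilde w_{\infty}$ with $\widetilde w_{0}$ supported on $|u|\le U_{0}:=(Y/X^{2})^{\varepsilon}$. A dyadic decomposition of $\widetilde w_{\infty}$ and Part (i) applied to the pieces $|u|\asymp U\ge U_{0}$ (its hypotheses hold because $\widetilde w$ becomes smoother on short intervals) bound that contribution by $\ll Y^{-1/2}(Y/X^{2})^{-A}$. For $\widetilde w_{0}$, Taylor-expanding at $u=0$ to order $M$ and integrating term by term, then completing each truncated moment to the exact Fresnel integral $\mu_{m}:=\int_{\mathbb{R}}u^{m}e^{iu^{2}}\,\dd u$ (which vanishes for odd $m$) at cost $O_{A}((Y/X^{2})^{-A})$ by Part (i), while the Taylor remainder contributes $\ll Y^{-1/2}(Y/X^{2})^{-A}$ once $M$ is large, gives
\[
  I=e^{ih(\xi_{0})}\sum_{m<M}\frac{\widetilde w^{(m)}(0)}{m!}\,\mu_{m}+O_{A}\!\big(Z(Y/X^{2})^{-A}\big).
\]
Undoing the normalization, the term $m=0$ equals $\sqrt{2\pi}\,e^{i\pi/4}\,e^{ih(\xi_{0})}w(\xi_{0})/\sqrt{h''(\xi_{0})}$ (using $\xi'(0)^{2}=1/g(0)=2/\phi''(0)$), and by Fa\`a di Bruno every $\widetilde w^{(m)}(0)$ is a polynomial in $w(\xi_{0}),w'(\xi_{0}),\dots$ and $h''(\xi_{0}),h'''(\xi_{0}),\dots$; hence the whole sum is $W_{T}(\xi_{0})/\sqrt{h''(\xi_{0})}$, where $W_{T}$ --- using $w^{(k)}\ll(Z/X)^{-k}$ and $h^{(k)}\ll Y/Z^{k}$ --- is $X$-inert and supported on $\xi_{0}\asymp Z$. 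This is the assertion.

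\smallskip
The real content is the uniformity bookkeeping: checking that the Morse substitution, the dyadic cutoffs and the Taylor truncation all preserve $X$-inertness uniformly over $\mathcal{F}$, and that Part (i) genuinely applies to every dyadic tail piece under only the hypothesis $Y\ge X^{2}$. These verifications are routine; fully detailed proofs of essentially this statement are in the literature on oscillatory integrals with uniformity in parameters (e.g.\ in work of Blomer, Khan and Young).
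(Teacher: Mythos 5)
The paper does not prove this lemma itself; its ``proof'' is the one-line pointer to \cite[\S 8]{BlomerKhanYoung} and \cite[\S 3]{KPY}. Your sketch correctly reproduces the content of those references rather than taking a genuinely different route: part (i) is the standard non-stationary-phase integration by parts with the estimate $(1/h')^{(j)}\ll (Z/Y)Z^{-j}$ feeding an induction on $\mathcal D^A w$, and part (ii) is the familiar chain of reductions (translate and rescale to $Z=1$ around the stationary point; the Morse change of variable $u=\xi\sqrt{g(\xi)}$ turning the phase into a pure Gaussian; a dyadic split of $\widetilde w$ into a core $|u|\le (Y/X^2)^\varepsilon$ plus a tail killed by part (i); Taylor expansion of the inert weight at $0$; completion of the truncated moments to Fresnel integrals, with Fa\`a di Bruno keeping track of inertness of the resulting $W_T$). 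So you are supplying the argument the paper delegates to the cited sources. Two points would need to be made explicit in a full write-up: the reduction ``assume $\xi_0\asymp Z$'' tacitly requires the derivative bounds on $h$ (in particular on $h''$) to hold on a neighbourhood of $\supp w$, not only on $\supp w$, so that $|h'|\gg Y/Z$ on $\supp w$ when $\xi_0$ is far; and the Morse substitution presupposes that $h''$ has a fixed sign, which one may assume by passing to the complex conjugate if $h''<0$. Neither is a gap in the outline --- both are handled in the cited proofs --- and with the uniformity bookkeeping carried out as you indicate, the sketch is sound and consistent with the lemma as stated.
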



\begin{proof}
  See \cite[\S 8]{BlomerKhanYoung} and \cite[\S 3]{KPY}.
\end{proof}

\section{The dual sum and Proof of Theorem \ref{thm:1+3} }\label{sec:dual}

In this section, we reduce the estimate of $\mathcal A(X,1\boxplus\Phi)$ to its dual sum.
To avoid the use of the Ramanujan conjecture in the case $\Phi=\Sym^2\phi$, we don't use Perron's formula as Friedlander--Iwaniec \cite{FI2005} did. Instead, we use smoothing and Mellin transform to reduce the problem to an estimate of a first integral moment of L-functions. Then we apply the stationary phase method to deal with the first moment which reduces the problem to an estimate of its dual sum.

\subsection{Smoothing and Mellin transform}
Let $0<Y\leq X/5$. Let $W_1$ be a smooth function with support $\supp W_1 \in [1/2-Y/X,1+Y/X]$ such that $W_1(u) = 1$ if $u\in[1/2,1]$, $W_1(u)\in[0,1]$ if $u\in[1/2-Y/X,1/2]\cup [1,1+Y/X]$.
Similarly, let $W_2$ be a smooth function with support $\supp W_2 \in [1/2,1]$ such that $W_2(u) = 1$ if $u\in[1/2+Y/X,1-Y/X]$, $W_2(u)\in[0,1]$ if $u\in[1/2,1/2+Y/X]\cup [1-Y/X,1]$. Assume  $W_j^{(k)}(u) \ll (X/Y)^{k}$ for any integer $k\geq0$ and $j\in\{1,2\}$. Then by \eqref{eqn:nonnegativity}, we have
\begin{multline*}
  \sum_{n\geq1} \lambda_{1\boxplus\Sym^2\phi}(n) W_{2}\left(\frac{n}{X}\right) \leq \mathcal A(X,1\boxplus\Sym^2\phi)-\mathcal A(X/2,1\boxplus\Sym^2\phi)
   \\
  \leq \sum_{n\geq1} \lambda_{1\boxplus\Sym^2\phi}(n) W_{1}\left(\frac{n}{X}\right).
\end{multline*}
In order to prove Theorem \ref{thm:1+3}, it suffices to prove for $W\in\{W_1,W_2\}$ and
\[
  \textrm{$Y=X^{3/5-\delta}$, for some $\delta\in[0,1/10)$,}
\]
we have
\begin{equation}\label{eqn:smoothed}
  \sum_{n\geq1} \lambda_{1\boxplus\Sym^2\phi}(n) W\left(\frac{n}{X}\right) = L(1,\Sym^2\phi) \tilde{W}(1) X  +  O(X^{3/5-\delta+o(1)}),
\end{equation}
where $\tilde{W}(s)$ 
is the Mellin transform of $W$. Note that $\tilde{W}(1)=1/2+O(Y/X)$.
Indeed, \eqref{eqn:smoothed} leads to
\[
  \mathcal A(X,1\boxplus\Sym^2\phi)-\mathcal A(X/2,1\boxplus\Sym^2\phi) = L(1,\Sym^2\phi) \frac{X}{2}  +  O(X^{3/5-\delta+o(1)}).
\]
Hence we have $\mathcal A(X,1\boxplus\Sym^2\phi) = L(1,\Sym^2\phi) \, X  +  O(X^{3/5-\delta+o(1)})$.

Let $\Phi$ be a Hecke--Maass cusp form for $\SL(3,\mathbb Z)$. Assuming GRC for $\Phi$, we have
\[
  \lambda_{1\boxplus\Phi}(n) = \sum_{\ell m = n} A_\Phi(1,m) \ll n^{o(1)}.
\]
Hence by taking $Y=X^{3/5-\delta}$ we have
\begin{multline*}
  \mathcal A(X,1\boxplus\Phi)-\mathcal A(X/2,1\boxplus\Phi) - \sum_{n\geq1} \lambda_{1\boxplus\Phi}(n) W_{1}\left(\frac{n}{X}\right) \\
  \ll \sum_{X/2-Y<n<X} |\lambda_{1\boxplus\Phi}(n)| + \sum_{X/2<n<X+Y} |\lambda_{1\boxplus\Phi}(n)| \ll X^{3/5-\delta+o(1)}.
\end{multline*}
Hence it suffices to show for $W=W_1$ and $Y=X^{3/5-\delta}$, we have
\begin{equation*}
  \sum_{n\geq1} \lambda_{1\boxplus\Phi}(n) W\left(\frac{n}{X}\right) = L(1,\Phi) \tilde{W}(1) X  +  O(X^{3/5-\delta+o(1)}).
\end{equation*}
\begin{remark}
  This is the only place where we need to assume GRC for $\Phi$ in order to prove Theorem \ref{thm:1+3}. In fact it suffices to assume GRC on average in short intervals for $\Phi$.
\end{remark}

By the inverse Mellin transform, we have
\[
  W(u) = \frac{1}{2\pi i} \int_{(2)} \tilde{W}(s) u^{-s} \dd s.
\]
Hence we get
\[
  \sum_{n\geq1} \lambda_{1\boxplus\Phi}(n) W\left(\frac{n}{X}\right)
  = \frac{1}{2\pi i} \int_{(2)} \tilde{W}(s) L(s,1\boxplus\Phi) X^s \dd s.
\]
Since $L(s,1\boxplus\Phi)=\zeta(s)L(s,\Phi)$, we have $\Res_{s=1}L(s,1\boxplus\Phi)=L(1,\Phi)$.
Shifting the contour of integration to the left, we get
\[
  \sum_{n\geq1} \lambda_{1\boxplus\Phi}(n) W\left(\frac{n}{X}\right)
  = L(1,\Phi) \tilde{W}(1) X +  \frac{1}{2\pi i} \int_{(1/2)} \tilde{W}(s) L(s,1\boxplus\Phi) X^s \dd s.
\]
By repeated integration by parts, we have
\begin{equation}\label{eqn:W=}
  \tilde{W}(s) = -\frac{1}{s} \int_{0}^{\infty} W'(u) u^{s} \dd u \ll \frac{1}{|s|^k} \left(\frac{X}{Y}\right)^{k-1}, \quad \textrm{for any $k\geq1$},
\end{equation}
since $\supp W^{(k)} \in [1/2-Y/X,1/2+Y/X]\cup [1-Y/X,1+Y/X]$.
This allows us to truncate the $s$-integral at $|s|\ll X^{1+\varepsilon}/Y$. In fact, we apply a smooth partition of unity, getting
\begin{multline*}
  \sum_{n\geq1} \lambda_{1\boxplus\Phi}(n) W\left(\frac{n}{X}\right)
  = L(1,\Phi) \tilde{W}(1) X + O(X^{-2020}) \\
   + O\bigg(X^{1/2} \sum_{\substack{T\leq X^{1+\varepsilon}/Y \\ T \; \textrm{dyadic}}} \left| \int_{\mathbb{R}} \tilde{W}(1/2+it) V\left(\frac{t}{T}\right) L(1/2+it,1\boxplus\Phi) X^{it} \dd t \right| \bigg).
\end{multline*}
By the first equality in \eqref{eqn:W=}, we have
\begin{multline*}
  \int_{\mathbb{R}} \tilde{W}(1/2+it) V\left(\frac{t}{T}\right) L(1/2+it,1\boxplus\Phi) X^{it} \dd t
  \\ = -
  \int_{0}^{\infty} W'(u) u^{1/2} \int_{\mathbb{R}} V\left(\frac{t}{T}\right) L(1/2+it,1\boxplus\Phi) (uX)^{it} \frac{\dd t}{1/2+it} \dd u.
\end{multline*}
Recall that $\tilde{W}(1)=1/2+O(Y/X)$. Hence
\begin{multline}\label{eqn:sum2integral}
  \sum_{n\geq1} \lambda_{1\boxplus\Phi}(n) W\left(\frac{n}{X}\right)
  = L(1,\Phi) X/2 + O(Y) \\
   + O\bigg( \sup_{u\in [1/3,2]} \sup_{T\ll X^{1+\varepsilon}/Y } \frac{X^{1/2}}{T} \left| \int_{\mathbb{R}} V\left(\frac{t}{T}\right) L(1/2+it,1\boxplus\Phi) (uX)^{it} \dd t \right| \bigg),
\end{multline}
for some fixed $V$ with compact support. Hence it suffices to consider
\[
  \mathcal{I} := \int_{\mathbb{R}} V\left(\frac{t}{T}\right) L(1/2+it,1\boxplus\Phi) X^{it} \dd t.
\]
We only consider the case $T\geq1$, since the case $T\leq -1$ can be done similarly and the case $-1\leq T\leq 1$ can be treated trivially. We will prove the following proposition.
\begin{proposition}\label{prop:I}
  Let $\Phi$ be a Hecke--Maass cusp form for $\SL_3(\mathbb{Z})$.
  We have
  \begin{itemize}
    \item [(i)] For any $X>0$ and $T\geq1$, we have $\mathcal{I} \ll T^{5/4+\varepsilon}$.
    \item [(ii)] If $X^{\varepsilon} \leq T \leq X^{2/5}$, then we have
        $ \mathcal{I} \ll T^{5/2+\varepsilon}X^{-1/2} + T^{1+\varepsilon}$.
    \item [(iii)]
  If $X^{5/13} \leq T \leq X^{5/12}$, then we have
  \[
    \mathcal{I} \ll T^{56/25+\varepsilon} X^{-2/5} .
  \]
  \end{itemize}
\end{proposition}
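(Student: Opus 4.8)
\medskip

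The three parts call for increasingly refined tools, all organised around the factorization $L(s,1\boxplus\Phi)=\zeta(s)L(s,\Phi)$.

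\emph{Part (i).} The plan is to argue directly, peeling off the $\zeta$-factor by the Cauchy--Schwarz inequality:
\[
  |\mathcal I|\ \le\ \Big(\int_{t\asymp T}|\zeta(\tfrac12+it)|^2\,\dd t\Big)^{1/2}\Big(\int_{t\asymp T}|L(\tfrac12+it,\Phi)|^2\,\dd t\Big)^{1/2}.
\]
The first integral is $\ll T^{1+\varepsilon}$ by the classical second moment of $\zeta$ (equivalently, Lemma~\ref{lemma:AFE} with $d=1$ followed by the integral mean value estimate \eqref{eqn:IMV}); the second is $\ll T^{3/2+\varepsilon}$, since Lemma~\ref{lemma:AFE} with $d=3$ writes $L(\tfrac12+it,\Phi)$ as a sum of Dirichlet polynomials of length $\ll T^{3/2+\varepsilon}$ and then \eqref{eqn:IMV} gives $\ll (T+T^{3/2})T^{\varepsilon}$. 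Multiplying the two square roots yields $\mathcal I\ll T^{1/2+\varepsilon}\cdot T^{3/4+\varepsilon}=T^{5/4+\varepsilon}$; this step is routine.

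\emph{A common reduction for (ii) and (iii).} Here I would instead apply Lemma~\ref{lemma:AFE} with $d=4$ directly to $L(\tfrac12+it,1\boxplus\Phi)$, split the $n$-sum dyadically into $n\asymp N\le T^{2+\varepsilon}$, and evaluate the $t$-integral by stationary phase. In the ``first'' sum of the approximate functional equation the phase is essentially $t\log(X/n)$, with no stationary point on $[T,2T]$; repeated integration by parts (Lemma~\ref{lemma:stationary_phase}(i)) makes this contribution negligible unless $X\ll T^{2+\varepsilon}$, in which case the trivial bound renders it $O(T^{1+\varepsilon})$ --- the same size as the Lemma~\ref{lemma:AFE} error term times the length $\asymp T$ of the $t$-integral. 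In the dual sum the phase is $h(t)=t\log(nX/q)-4t\log t+4t(1+\log 2\pi)$, so $h'(t)=\log\!\big(nX(2\pi)^4/(qt^4)\big)$ vanishes at $t_0=2\pi(nX/q)^{1/4}$; requiring $t_0\asymp T$ forces $n\asymp N:=T^4/X$ (hence the dual sum matters only for $T^2\ll X\ll T^4$), and since near $t_0$ one has $h'(t)=4\log(t_0/t)=O(1)$, $h''(t)=-4/t\asymp T^{-1}$ and $h^{(j)}(t)\ll T^{1-j}$ for $j\ge2$, Lemma~\ref{lemma:stationary_phase}(ii) turns the $t$-integral into $\asymp T^{1/2}\,e\!\big(4(nX/q)^{1/4}\big)\,W(n)$ for an $N$-supported inert weight $W$ (the leftover constant $h(t_0)=4t_0$ producing exactly the phase). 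As $(NX)^{1/4}=T$, we may write $4(nX/q)^{1/4}=cT(n/N)^{1/4}$ with $c=4q^{-1/4}$, and we arrive at
\[
  \mathcal I\ \ll\ T^{1+\varepsilon}\ +\ T^{1/2+\varepsilon}\,\bigg|\,\sum_{n\asymp N}\frac{\overline{\lambda_{1\boxplus\Phi}(n)}}{\sqrt n}\,e\!\big(cT(n/N)^{1/4}\big)\,W(n)\,\bigg|,\qquad N=\frac{T^4}{X}.
\]

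\emph{Parts (ii) and (iii).} In part (ii) we have $X\ge T^{5/2}$, so $N\le T^{3/2}$, and the trivial bound $\sum_{n\asymp N}|\lambda_{1\boxplus\Phi}(n)|/\sqrt n\ll N^{1/2+\varepsilon}$ --- available since $\lambda_{1\boxplus\Phi}=1\ast A_\Phi(1,\cdot)$ together with \eqref{eqn:RS3} gives $\sum_{n\le x}|\lambda_{1\boxplus\Phi}(n)|\ll x^{1+\varepsilon}$ --- immediately yields $\mathcal I\ll T^{1+\varepsilon}+T^{5/2+\varepsilon}X^{-1/2}$. In part (iii) we have $T^{12/5}\le X\le T^{13/5}$, so $T^{7/5}\le N\le T^{8/5}$, and the trivial bound is no longer good enough; the plan is to exploit oscillation through Theorem~\ref{thm:3}. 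Writing $\lambda_{1\boxplus\Phi}(n)=\sum_{\ell m=n}A_\Phi(1,m)$ and splitting into dyadic ranges $\ell\asymp L$, $m\asymp M$ with $LM\asymp N$, the crucial observation is that, for each fixed $\ell$, the inner sum
\[
  \sum_{m\asymp M}\overline{A_\Phi(1,m)}\,e\!\big(cT(\ell m/N)^{1/4}\big)\,W(\ell m)
\]
is exactly of the shape $\mathscr S(M)$ appearing in Theorem~\ref{thm:3} (for the dual form $\widetilde{\Phi}$), with $\varphi(u)=u^{1/4}$ and with large parameter $\asymp T$, because $cT(\ell m/N)^{1/4}=cT(\ell M/N)^{1/4}(m/M)^{1/4}$ and $\ell M\asymp N$. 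Inserting Theorem~\ref{thm:3} on the range $T^{6/5}\le M\le T^{8/5}$ (and the trivial bound once $M\le T^{6/5}$), summing over $\ell$, and optimising the split should give $\mathcal I\ll T^{56/25+\varepsilon}X^{-2/5}$.

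\emph{The main obstacle.} The delicate step is the summation over the divisor $\ell$ produced by the $\zeta$-factor. This variable carries no cancellation of its own, and once $M=N/\ell$ drops below $T^{6/5}$ the black-box Theorem~\ref{thm:3} gives no gain, so a bare insertion of its bound summed over all $\ell\ll N$ recovers only the trivial size $N^{1/2+\varepsilon}$ of the dual sum. Getting the true power saving therefore demands a careful balance --- choosing the split point in $\ell$, playing the two regimes of Theorem~\ref{thm:3} against each other, and very likely either supplying a mild nontrivial estimate for short $\GL(3)$ exponential sums or using Cauchy--Schwarz in $\ell$ --- so that nothing stronger than the stated bounds for $\mathscr S(M)$ is needed. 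By contrast, the moment inputs in (i) and the stationary-phase reduction underlying (ii)--(iii) are standard.
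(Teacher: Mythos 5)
Your treatment of part (i) and part (ii), and the stationary-phase reduction that turns $\mathcal I$ (for $T\gg X^{1/4}$) into
\[
\mathcal I\ \ll\ T^{1+\varepsilon}+T^{1/2+\varepsilon}N^{-1/2}\Big|\sum_{n}\lambda_{1\boxplus\Phi}(n)V\big(n/N\big)e\big(-4(nX)^{1/4}\big)\Big|,\qquad N\asymp T^4/X,
\]
coincide with the paper's argument, including the identification of the stationary point $t_0\asymp 2\pi(nX)^{1/4}$ forcing $N\asymp T^4/X$ and the observation that the trivial bound (via \eqref{eqn:RS3}) already gives $T^{5/2}X^{-1/2}+T^{1+\varepsilon}$. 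The decomposition $\lambda_{1\boxplus\Phi}=1\ast A_\Phi(1,\cdot)$ into dyadic blocks $\ell\asymp L$, $m\asymp M$, $LM\asymp N$, and the recognition that the $m$-sum is precisely the object $\mathscr S(M)$ of Theorem~\ref{thm:3} with $\varphi(u)=u^{1/4}$, are also exactly as in the paper.

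However, in part (iii) you have flagged the real obstacle --- the divisor variable $\ell$ --- but you have not closed it, and the two remedies you float are not the ones that work. The paper's resolution (Lemma~\ref{Lemma:FI} and Proposition~\ref{prop:B}) is to split at $L_0\asymp T^{44/25}X^{-3/5}$. When $L\ll L_0$, one has $M=N/L\gg T^{56/25}X^{-2/5}\ge T^{6/5}$ (using $T\ge X^{5/13}$) and $M\le N\ll T^{8/5}$ (using $T\le X^{5/12}$), so Theorem~\ref{thm:3} applies directly to the $m$-sum with a genuine saving, and summing trivially over $\ell\le L$ is affordable. When $L\gg L_0$ (so $M$ is too short for Theorem~\ref{thm:3}), one gives up on any cancellation from the $\GL(3)$ coefficients entirely: bound $A_\Phi(1,m)$ on average via \eqref{eqn:RS3}, and treat the inner $\ell$-sum
\[
\sum_{L<\ell\le 2L}e\Big(-4(\ell m X)^{1/4}+\tfrac{v'}{2\pi}\log\ell\Big)
\]
as a classical exponential sum in the smooth variable $\ell$. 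Since $|h^{(k)}(\ell)|\asymp (\ell m X)^{1/4}\ell^{-k}\asymp T L^{-k}$ (because $(LMX)^{1/4}\asymp T$), the fifth-derivative van der Corput estimate (the exponent pair $(1/30,13/15)$, Lemma~\ref{Lemma:FI}) yields $\ll T^{1/30}L^{5/6}+T^{-1}L$, and the two regimes balance at exactly $T^{187/50}X^{-9/10}$ for $\mathcal B(L,M;v')$, giving $\mathcal I\ll T^{56/25+\varepsilon}X^{-2/5}$. Neither ``short $\GL(3)$ exponential sums'' nor Cauchy--Schwarz in $\ell$ plays a role; the trick is the classical divisor-problem move of exploiting the smooth variable produced by the $\zeta$-factor. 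Without this elementary exponent-pair input your argument cannot beat the trivial bound once $M\le T^{6/5}$, which is the entire point of part (iii).
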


We will prove this proposition in the next subsection which will need Theorem \ref{thm:3}. We can prove nontrivial results in other ranges of $T$ by using the second claim in Theorem \ref{thm:3}. Since this is enough for our application, we don't pursue it here.  Now we can finish the proof of Theorem \ref{thm:1+3}.

\begin{proof}[Proof of Theorem  \ref{thm:1+3} by assuming Proposition \ref{prop:I}]
  Assume $\delta<1/60$. Then by Proposition \ref{prop:I} (iii), for $X^{5/13} \leq T \leq X^{1+\varepsilon}/Y = X^{2/5+\delta+\varepsilon}$, the contribution to \eqref{eqn:sum2integral} is bounded by
  \[
    O(X^{1/2}T^{-1} T^{56/25+\varepsilon} X^{-2/5})
    = O(X^{1/10} T^{31/25+\varepsilon}) = O(X^{(149+310 \delta)/250+\varepsilon}).
  \]
  Note that the other error term is $O(X^{3/5-\delta+\varepsilon})$, so the best choice is $\delta=1/560$.
  By Proposition \ref{prop:I} (i), for $T \leq X^{5/13}$, the contribution to \eqref{eqn:sum2integral} is bounded by
  \[
    O(X^{1/2}T^{-1} T^{5/4+\varepsilon})
    = O(X^{1/2}T^{1/4+\varepsilon}) = O(X^{31/52+\varepsilon}) = O(X^{3/5-\delta+\varepsilon}).
  \]
  This proves Theorem  \ref{thm:1+3}.
\end{proof}

\subsection{The integral first moment}

We will treat $\mathcal{I}$ differently depending on the magnitudes of  $T$ and $X$.

For any  $X>0$, 
we can use the fact $L(1/2+it,1\boxplus\Phi)=\zeta(1/2+it)L(1/2+it,\Phi)$. Applying the Cauchy--Schwarz inequality,  we get
\[
  \mathcal{I}  \ll \Big( \int_{\mathbb{R}} V\left(\frac{t}{T}\right) |\zeta(1/2+it)|^2 \dd t \Big)^{1/2} \Big( \int_{\mathbb{R}} V\left(\frac{t}{T}\right) |L(1/2+it,\Phi)|^2 \dd t \Big)^{1/2}.
\]
By the approximate functional equations (see Lemma \ref{lemma:AFE}) for both $f=1$ and $f=\Phi$, we have
\[
  \mathcal{I}  \ll T^\varepsilon \Big( \int_{0}^{T} \Big|\sum_{\ell\leq T^{1/2+\varepsilon}} a_\ell \ell^{it}\Big|^2 \dd t \Big)^{1/2} \Big( \int_{0}^{T} \Big|\sum_{m\leq T^{3/2+\varepsilon}} b_m m^{it}\Big|^2 \dd t \Big)^{1/2},
\]
for some $\{a_\ell\}$ and $\{b_m\}$ satisfying that $|a_\ell|\ll \ell^{-1/2+\varepsilon}$ and $\sum_{m\leq M} |b_m|^2 \ll M^{\varepsilon}$.
Here we have used \eqref{eqn:RS3}.
Now by the integral mean-value estimate (see e.g. \cite[Theorem 9.1]{IwaniecKowalski2004analytic})
\begin{equation}\label{eqn:IMV}
  \int_{0}^{T} \Big| \sum_{n\leq N} a_n n^{it} \Big|^2 \dd t \ll (T+N) \sum_{n\leq N} |a_n|^2,
\end{equation}
we obtain
\begin{equation}\label{eqn:I<<}
  \mathcal{I} \ll T^{5/4+\varepsilon}.
\end{equation}
This proves Proposition \ref{prop:I} (i).

If we assume $T\leq X^{2/5-4\delta+\varepsilon}$, we find the contribution from the error term in \eqref{eqn:sum2integral} is
\[
  O(X^{1/2+\varepsilon} T^{1/4}) = O(X^{3/5-\delta+o(1)}).
\]
\begin{remark}
  If we take $\delta=0$, then $X^{1+\varepsilon}/Y \leq X^{2/5+\varepsilon}$. Hence we repove the classical Rankin--Selberg estimate $\Delta_2(X,\phi) \ll X^{3/5+\varepsilon}$.
\end{remark}

From now on, we assume $X^\varepsilon  \leq T \leq X^{1/2-\varepsilon}$.
By the approximate functional equations (see Lemma \ref{lemma:AFE}) with $f=1\boxplus\Phi$ and $\Phi$ a Hecke--Maass cusp form for $\SL_3(\mathbb{Z})$, we have
\begin{multline*}
  \mathcal{I} \ll  T^\varepsilon \sup_{w\in[\varepsilon-iT^\varepsilon,\varepsilon+iT^\varepsilon]}
  \sup_{N\leq T^{2+\varepsilon}}
   \bigg(\bigg|
  \int_{\mathbb{R}} V\left(\frac{t}{T}\right) \sum_{n\geq1} \frac{\lambda_{1\boxplus\Phi}(n)}{n^{1/2+it+w}} V\left(\frac{n}{N}\right)  t^{2w} X^{it} \dd t\bigg|
  \\
  +  \bigg|
  \int_{\mathbb{R}} V\left(\frac{t}{T}\right) \sum_{n\geq1}  \frac{\overline{\lambda_{1\boxplus\Phi}(n)}}{n^{1/2-it+w}}  V\left(\frac{n}{N}\right)  \left(\frac{t}{2\pi e}\right)^{-i4t} t^{2w} X^{it} \dd t\bigg| \bigg)
  + O\left(T^{1+\varepsilon}\right).
\end{multline*}
We can absorb the factor $t^{2w}$ to the weight function $V(t/T)$ and $1/n^{1/2+w}$ to $V(n/N)$. Then the new $V_j$ ($j=1,2$) depends on $w$ and  satisfies that $\supp V_j\subset (1/2,1)$ and $V_j^{(k)} \ll T^{k\varepsilon}$ for $k\geq0$. Hence we have
\begin{equation*}
  \mathcal{I} \ll  T^\varepsilon N^{-1/2} \sup_{w\in[\varepsilon-iT^\varepsilon,\varepsilon+iT^\varepsilon]}
  \sup_{N\leq T^{2+\varepsilon}}  \big(\big| \mathcal{I}_1(N) \big|
  +  \big| \mathcal{I}_2(N) \big| \big)
  + O\left(T^{1+\varepsilon}\right),
\end{equation*}
where
\[
  \mathcal{I}_1(N) := \int_{\mathbb{R}} V_1\left(\frac{t}{T}\right) \sum_{n\geq1} \frac{\lambda_{1\boxplus\Phi}(n)}{n^{it}} V_2\left(\frac{n}{N}\right)  X^{it} \dd t
\]
and
\[
  \mathcal{I}_2(N) := \int_{\mathbb{R}} \overline{V_1}\left(\frac{t}{T}\right) \sum_{n\geq1}  \frac{\lambda_{1\boxplus\Phi}(n)}{n^{it}}  \overline{ V_2}\left(\frac{n}{N}\right)  \left(\frac{t}{2\pi e}\right)^{i4t} X^{-it} \dd t,
\]
where $\overline{V_j}(u)=\overline{V_j(u)}$ for $j=1,2$.

We first deal with $\mathcal{I}_1(N)$. Changing the order of integral and summation, and making a change of variable $t=T\xi$, we get
\[
  \mathcal{I}_1(N) = T \sum_{n\geq1} \lambda_{1\boxplus\Phi}(n) V_2\left(\frac{n}{N}\right) \int_{\mathbb{R}} V_1\left(\xi\right) e^{i\xi T\log X/n} \dd \xi.
\]
Since $n\ll N \ll T^{2+\varepsilon}  \ll X^{1-\varepsilon}$, we have $T\log X/n \gg T$. By repeated integration by parts, we obtain
\begin{equation}\label{eqn:I1<<}
  \mathcal{I}_1(N) = O(T^{-2020}).
\end{equation}
Now we consider $\mathcal{I}_2(N)$. Similarly, we arrive at
\[
  \mathcal{I}_2(N) = T \sum_{n\geq1} \lambda_{1\boxplus\Phi}(n)  \overline{V_2}\left(\frac{n}{N}\right) \int_{\mathbb{R}} \overline{V_1}\left(\xi\right) e^{i (4 \xi T \log \xi + \xi T \log \frac{T^4}{(2\pi e)^4 nX})}   \dd \xi.
\]
Let
\[
  h_1(\xi) = 4 \xi T \log \xi + \xi T \log \frac{T^4}{(2\pi e)^4 nX}.
\]
Then we have
\[
  h_1'(\xi) = 4 T \log e \xi + T \log \frac{T^4}{(2\pi e)^4 nX}  = 4 T \log \frac{T \xi}{2\pi (nX)^{1/4}},
\]
\[
  h_1''(\xi) = 4T /\xi, \quad h_1^{(j)}(\xi) \asymp_j T, \quad j\geq 2.
\]
The solution of $h_1'(\xi)=0$ is $\xi_0=\frac{2\pi (nX)^{1/4}}{T}$.
Note that $h_1(\xi_0)= -8\pi (nX)^{1/4}$.
By the stationary phase method for the $\xi$-integral (see Lemma \ref{lemma:stationary_phase} (ii)), we get
\[
  \mathcal{I}_2(N) = T^{1/2} \sum_{n\geq1}  \lambda_{1\boxplus\Phi}(n)  \overline{V_2}\left(\frac{n}{N}\right) e\left(-4(nX)^{1/4}\right) V_3\left(\frac{(nX)^{1/4}}{T}\right) + O(T^{-2020}),
\]
where $V_3$ is some smooth function such that $\supp V_3\subset (1/20,20)$ and $V_3^{(k)} \ll T^{k\varepsilon}$ for $k\geq0$.
Hence we only need to consider $N\asymp T^4/X$, otherwise the contribution is negligibly small.
Thus if $T\leq X^{1/4-\varepsilon}$, then we have
\begin{equation}\label{eqn:I2<<s}
  \mathcal{I}_2(N) = O(T^{-2020}).
\end{equation}

Now we assume $T\geq X^{1/4-\varepsilon}$.
When $N\asymp T^4/X$, we can remove the weight function $V_3$ by a Mellin inversion, getting
\[
  \mathcal{I}_2(N) = T^{1/2} \frac{1}{2\pi}\int_{\mathbb{R}} \sum_{n\geq1}  \lambda_{1\boxplus\Phi}(n)  \overline{V_2}\left(\frac{n}{N}\right) e\left(-4(nX)^{1/4}\right) \tilde{V_3}(iv)  \left(\frac{(nX)^{1/4}}{T}\right)^{-iv} \dd v + O(T^{-2020}).
\]
By repeated integration by parts, we can truncate $v$-integral at $|v|\leq T^\varepsilon$ with a negligible error term.
Hence we have
\begin{equation}\label{eqn:I2<<}
  \mathcal{I}_2(N) \ll  T^{1/2+\varepsilon} \sup_{|v|\leq T^\varepsilon} \Big| \sum_{n\geq1}  \lambda_{1\boxplus\Phi}(n) V\left(\frac{n}{N}\right) e\left(-4(nX)^{1/4}\right) \Big| + T^{-2020},
\end{equation}
for some smooth function $V$ (depending on $v$) such that $\supp V\subset (1/20,20)$ and $V^{(k)} \ll T^{k\varepsilon}$ for $k\geq0$.
Thus by \eqref{eqn:I1<<} and \eqref{eqn:I2<<}, we get
\begin{equation}\label{eqn:I<<sum}
  \mathcal{I} \ll  T^\varepsilon \sup_{w\in[\varepsilon-iT^\varepsilon,\varepsilon+iT^\varepsilon]}
  \sup_{N\asymp T^{4}/X} \sup_{|v|\leq T^\varepsilon}
   \frac{T^{1/2}}{ N^{1/2}}
  \Big| \sum_{n\geq1}  \lambda_{1\boxplus\Phi}(n) V\left(\frac{n}{N}\right) e\left(-4(nX)^{1/4}\right) \Big|
  + O\left(T^{1+\varepsilon}\right).
\end{equation}
By \eqref{eqn:RS3}, we get
\begin{equation}\label{eqn:I2<<m}
  \mathcal{I} \ll T^{5/2+\varepsilon}X^{-1/2} + T^{1+\varepsilon}.
\end{equation}
This proves Proposition \ref{prop:I} (ii).

To prove Proposition \ref{prop:I} (iii), we need to find a power saving in the dual sum
\[
  \mathcal B(N) := \sum_{n\geq1}  \lambda_{1\boxplus\Phi}(n) V\left(\frac{n}{N}\right) e\left(-4(nX)^{1/4}\right)
\]
if $X^{5/13} \leq T \leq X^{5/12}$.
Now we should use the fact $\lambda_{1\boxplus\Phi}(n) = \sum_{\ell m=n}A(1,m)$. Applying a dyadic partition of the $\ell$-sum and a smooth partition of unity for the $m$-sum, we get
\[
  \mathcal B(N) = \sum_{\substack{L\ll N \\ L \; \textrm{dyadic}}} \sum_{\substack{M\ll N \\ M \; \textrm{dyadic}}} \sum_{L<\ell\leq 2L} \sum_{m\geq1}  A(1,m) W\left(\frac{m}{M}\right) V\left(\frac{\ell m}{N}\right) e\left(- 4(\ell mX)^{1/4}\right).
\]
Here  $W$ is a fixed smooth function such that $\supp W\subset [1/4,2]$ and $W^{(k)} \ll 1$ for $k\geq0$.
Because of $\supp V\subset (1/20,20)$, we only need to consider the case $LM \asymp N$, in which case we can remove the weight function $V$ by a Mellin inversion as above.
Hence we obtain
\begin{equation}\label{eqn:B(N)<<B(LM)}
  \mathcal{B}(N) \ll T^\varepsilon \sup_{\substack{L\ll N, \ M\ll N \\ LM\asymp N}} \sup_{-T^\varepsilon \leq   v' \leq T^\varepsilon} |\mathcal{B}(L,M;v')|,
\end{equation}
where
\[
  \mathcal B(L,M;v') := \sum_{L<\ell\leq 2L}  \sum_{m\geq1} A(1,m) W\left(\frac{m}{M}\right)  e\left(- 4(\ell mX)^{1/4}\right) \ell^{iv'} m^{iv'}.
\]

We have the following estimates.
\begin{proposition}\label{prop:B}
  Let $X^{5/13} \leq T \leq X^{5/12}$, $N\asymp T^4/X \asymp LM$. Assume $-T^\varepsilon \leq   v' \leq T^\varepsilon$. Then we have
  \[
    \mathcal B(L,M;v') \ll T^{187/50+\varepsilon} X^{-9/10}.
  \]
\end{proposition}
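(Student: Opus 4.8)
The plan is to estimate $\mathcal{B}(L,M;v')$ by splitting the dyadic parameters $L,M\geq1$ (with $LM\asymp N=T^4/X$ forced by $\supp V$) into three ranges according to the size of $L$: for small $L$ I would invoke Theorem~\ref{thm:3} on the inner $m$-sum, for intermediate $L$ I would apply the Cauchy--Schwarz inequality to pull out the $\GL(3)$ coefficients and reduce to a diagonal, and for large $L$ I would estimate the $\ell$-sum directly by the second derivative test. Throughout one uses that on the support of the weights $(\ell m X)^{1/4}\asymp(NX)^{1/4}\asymp T$, and that $|v'|\leq T^\varepsilon$, so the twists $\ell^{iv'}$ and $m^{iv'}$ perturb the relevant phases by only $O(T^\varepsilon)$ in each derivative and can be absorbed into $T^\varepsilon$-inert weights.

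\emph{Range I: $L\leq T^{13/25}X^{-1/5}$.} Then $M=N/L\geq T^{87/25}X^{-4/5}$, which for $T\geq X^{5/13}$ (indeed already for $T\geq X^{20/57}$) exceeds $T^{6/5}$, so Theorem~\ref{thm:3} applies to $\mathcal{S}_\ell(M):=\sum_m A(1,m)W(m/M)m^{iv'}e(-4(\ell m X)^{1/4})$ for each fixed $\ell\asymp L$: after complex conjugation and passing to the dual form $\widetilde\Phi$ one writes the exponential as $e(4(\ell MX)^{1/4}\varphi(m/M))$ with $\varphi(u)=u^{1/4}$ and $4(\ell MX)^{1/4}\asymp T$, noting that $W(\cdot)(\cdot)^{-iv'}$ is a $T^\varepsilon$-inert weight supported in a fixed dyadic interval. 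Theorem~\ref{thm:3} then yields $\mathcal{S}_\ell(M)\ll T^{3/10}M^{3/4+\varepsilon}$ (its second bound $\ll T^{-1/2}M^{5/4+\varepsilon}$ handles the inessential edge $M\geq T^{8/5-\varepsilon}$ arising only when $T$ is very close to $X^{5/12}$). Summing trivially over the $\asymp L$ values of $\ell$ gives
\[
  \mathcal{B}(L,M;v')\ll L\,T^{3/10}M^{3/4+\varepsilon}=T^{3/10}N\,M^{-1/4+\varepsilon}\ll T^{343/100+\varepsilon}X^{-4/5},
\]
and one checks $T^{343/100}X^{-4/5}\leq T^{187/50}X^{-9/10}$ precisely because $T\geq X^{10/31}$, which follows from $T\geq X^{5/13}$.

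\emph{Range II: $T^{13/25}X^{-1/5}<L\leq T^{1-\varepsilon}$.} Apply Cauchy--Schwarz to extract $A(1,m)$, using \eqref{eqn:RS3} with $m=1$ in the form $\sum_{m\asymp M}|A(1,m)|^2\ll M^{1+\varepsilon}$; with a fixed smooth majorant $\widetilde{W}$ of $|W|^2$ this gives
\[
  |\mathcal{B}(L,M;v')|^2\ll M^{1+\varepsilon}\sum_{m}\widetilde{W}\!\left(\frac{m}{M}\right)\left|\sum_{\ell\asymp L}e\!\left(-4(\ell mX)^{1/4}\right)\ell^{iv'}\right|^2.
\]
Opening the square and switching the order of summation, the $m$-sum attached to a pair $(\ell_1,\ell_2)$ has phase essentially $-8\pi X^{1/4}(\ell_1^{1/4}-\ell_2^{1/4})m^{1/4}$, whose derivative on $m\asymp M$ is $\asymp(T/L)|\ell_1-\ell_2|$; since $L\leq T^{1-\varepsilon}$ this exceeds $T^{\varepsilon/2}$ whenever $\ell_1\neq\ell_2$, so repeated integration by parts makes every off-diagonal term negligible. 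The diagonal contributes $\ll LM$, hence $|\mathcal{B}(L,M;v')|^2\ll L M^{2+\varepsilon}$, i.e. $\mathcal{B}(L,M;v')\ll N L^{-1/2+\varepsilon}$, which is $\ll T^{187/50+\varepsilon}X^{-9/10}$ exactly when $L\gg T^{13/25}X^{-1/5}$ — the defining condition of this range. For the remaining \emph{Range III: $L>T^{1-\varepsilon}$}, for each fixed $m\asymp M$ the phase $f(\ell)=-8\pi(mX)^{1/4}\ell^{1/4}+v'\log\ell$ satisfies $|f''(\ell)|\asymp TL^{-2}$ on $\ell\asymp L$, so the second derivative test gives $\sum_{\ell\asymp L}e(f(\ell))\ll T^{1/2}+T^{-1/2}L$; summing $\sum_{m\asymp M}|A(1,m)|\ll M^{1+\varepsilon}$ (Cauchy--Schwarz and \eqref{eqn:RS3}) then gives $\mathcal{B}(L,M;v')\ll M^{1+\varepsilon}(T^{1/2}+T^{-1/2}L)\ll N T^{-1/2+\varepsilon}\ll T^{7/2+\varepsilon}X^{-1}$, comfortably below the target. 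Since $T^{13/25}X^{-1/5}\leq T^{1/25}\ll T^{1-\varepsilon}$ throughout $X^{5/13}\leq T\leq X^{5/12}$, Ranges I--III partition $1\leq L\leq N$, and combining the three bounds proves Proposition~\ref{prop:B}.

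The genuinely deep ingredient is Theorem~\ref{thm:3}; within the present argument the main things requiring care are (a) verifying the hypotheses of Theorem~\ref{thm:3} — the support and $T^\varepsilon$-inertness of $W(\cdot)(\cdot)^{-iv'}$, and the uniformity $4(\ell MX)^{1/4}\asymp T$ over $\ell\asymp L$ — and (b) the off-diagonal analysis in Range II, where one should either start from smooth dyadic weights in $\ell$ and $m$ or insert a short argument to pass from the sharp cutoff $L<\ell\leq2L$ to a smooth one before applying integration by parts. Finally, the numerology must be tracked exactly: all three ranges meet at $L\asymp T^{13/25}X^{-1/5}$, and it is this balance point (via the diagonal bound $NL^{-1/2}$ of Range II) that produces the exponent $T^{187/50}X^{-9/10}$, with Ranges I and III having strictly more room to spare.
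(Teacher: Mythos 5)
The proposal takes a genuinely different route from the paper, and it has a gap in the middle range. The paper splits only at $L\asymp T^{44/25}X^{-3/5}$: for $L$ below this it applies Theorem~\ref{thm:3} to the $m$-sum, and for $L$ above it applies a \emph{fifth}-derivative van der Corput estimate (Lemma~\ref{Lemma:FI}, exponent pair $(1/30,13/15)$) to the $\ell$-sum for each fixed $m$. You instead use Theorem~\ref{thm:3} for small $L$, a Cauchy--Schwarz/diagonal argument for intermediate $L$, and the second derivative test for $L>T^{1-\varepsilon}$. Your Ranges~I and~III are sound (and Range~I, pushed to the paper's threshold $L\asymp T^{44/25}X^{-3/5}$, gives exactly $T^{187/50}X^{-9/10}$), but Range~II does not close.

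The problem is the off-diagonal analysis. You claim the $m$-phase derivative is $\asymp(T/L)|\ell_1-\ell_2|$ and that it always exceeding $T^{\varepsilon/2}$ makes the off-diagonal negligible by integration by parts. In fact the pointwise derivative of $g(m)=-4(mX)^{1/4}(\ell_1^{1/4}-\ell_2^{1/4})$ is
\[
g'(m)\asymp X^{1/4}L^{-3/4}M^{-3/4}\,|\ell_1-\ell_2| \asymp \frac{T}{N}\,|\ell_1-\ell_2|,
\]
so what you computed as the ``derivative'' is really $M\cdot g'(m)=(T/L)|\ell_1-\ell_2|$. For the $m$-sum to be negligible one needs both $M\,g'\gg T^\varepsilon$ (which $L\leq T^{1-\varepsilon}$ does give) \emph{and} $g'\ll 1$, so that Poisson summation has no resonating frequencies. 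But $g'$ exceeds $1$ as soon as $|\ell_1-\ell_2|\gg N/T=T^3/X$; since $T^3/X\leq T^{3/5}<T^{1-\varepsilon}$ in the stated range, such pairs do occur throughout Range~II. For those pairs the $m$-sum is genuinely of size $\asymp(M\,g')^{1/2}$, and a correct second-derivative estimate of the off-diagonal contributes $\ll M^{3/4}L^{5/4}(T/N)^{1/4}\asymp N^{1/2}L^{1/2}T^{1/4}$ to $\mathcal{B}$, which at $L\asymp T^{1-\varepsilon}$ is $\asymp T^{11/4}X^{-1/2}$; this exceeds the target $T^{187/50}X^{-9/10}$ by a factor $T^{-99/100}X^{2/5}\geq X^{1/52}$ when $T$ is near $X^{5/13}$. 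Restricting Range~II to $L\leq N/T$ (where the off-diagonal really is negligible) and starting Range~III where $NT^{1/2}/L\leq T^{187/50}X^{-9/10}$, i.e.\ $L\geq T^{19/25}X^{-1/10}$, leaves an uncovered interval $N/T<L<T^{19/25}X^{-1/10}$ whenever $T<X^{45/112}$, in particular near $T=X^{5/13}$. Filling this gap is precisely what the paper's use of the exponent pair $(1/30,13/15)$ (five applications of van der Corput's inequality, Lemma~\ref{Lemma:FI}) accomplishes, and some such higher-derivative input on the $\ell$-sum seems essential here.
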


In order to prove Proposition \ref{prop:B}, we need the following van der Corput type estimate of exponential sums.
\begin{lemma}\label{Lemma:FI}
  Let $h$ be a smooth function on the interval $[L,2L]$ with derivatives satisfying that $|h^{(k)}|\asymp FL^{-k}$ for $1\leq k\leq 5$.  Then for any subinterval $I$ of $[L,2L]$ we have
  \[
    \sum_{\ell\in I} e(h(\ell)) \ll F^{1/30} L^{5/6} + F^{-1} L.
  \]
\end{lemma}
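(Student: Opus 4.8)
The plan is to prove the van der Corput estimate by combining the classical $B$-process (Poisson summation / the exponent pair $A$- and $B$-operations) with the fifth-derivative van der Corput estimate, following Titchmarsh \cite[\S5]{titchmarsh1986theory} and the discussion of exponent pairs in Graham--Kolesnik \cite[Chap.~3]{GK}. Recall that the pair $(k,\ell)=(1/30,13/15)$ can be obtained from the trivial pair $(0,1)$ by applying the $A$-process twice and then the $B$-process once, using the van der Corput estimate for the fifth derivative; equivalently, it arises directly from the estimate
\[
  \sum_{\ell\in I} e(h(\ell)) \ll F^{1/30} L^{5/6} + \text{(lower order)}
\]
valid whenever $|h^{(5)}|\asymp F L^{-5}$ on $I\subset[L,2L]$. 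So the first step is simply to cite or reproduce this: by the theory of exponent pairs, since $(1/30,13/15)$ is an exponent pair, for any smooth $h$ with $h'\asymp F/L$ (so that $h$ behaves like a model phase of size $F$ on an interval of length $L$), one has $\sum_{\ell\in I} e(h(\ell)) \ll (F/L)^{1/30} L^{13/15} = F^{1/30} L^{5/6}$.

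The technical point to address is that the hypothesis $|h^{(k)}|\asymp FL^{-k}$ for $1\le k\le5$ is slightly stronger than, but certainly sufficient for, invoking the exponent pair machinery: the derivative conditions are exactly the regularity conditions needed to apply the $A$- and $B$-processes in succession (each process requires control of one further derivative, and we need up to the fifth). I would therefore state explicitly that on each dyadic (or finer) subinterval these conditions let us apply Lemma~2.1 or the $B$-process lemma of \cite[Chap.~3]{GK} iteratively. The additive term $F^{-1}L$ in the conclusion accounts for two effects: first, when $F\ll1$ the sum is trivially $\ll L \ll F^{-1}L$ and the van der Corput bound is vacuous; second, in the $B$-process the "zero frequency'' or endpoint terms and the transition ranges contribute at most $O(F^{-1}L)$ after unwinding, which is dominated by $F^{-1}L$ in the relevant ranges. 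One can also simply note that the stated bound is trivial unless $L^{1/5}\ll F\ll L^{5}$, and in that intermediate range the exponent-pair estimate is clean.

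The main obstacle — really the only nontrivial point — is bookkeeping the error terms through the three successive processes so that everything is absorbed into the two displayed terms $F^{1/30}L^{5/6}+F^{-1}L$; this is routine but must be done with care about the ranges of $F$ relative to powers of $L$, since the $A$-process requires $F$ not too small and the $B$-process requires $F$ not too large, and outside those ranges one falls back on the trivial bound or on a lower-order exponent pair. A clean alternative, which I would adopt to keep the proof short, is to reduce to the case $L^{\varepsilon}\le F\le L^{5}$ by the trivial bound, split $I$ into $O(1)$ pieces on which $h''$ has constant sign, and then cite the fifth-derivative test directly: if $|h^{(5)}|\asymp FL^{-5}$ then by \cite[Theorem~5.11]{titchmarsh1986theory} (the van der Corput estimate with the fifth derivative, i.e. $q=5$, giving exponent $1/(2^q-2)=1/30$) one has $\sum_{\ell\in I}e(h(\ell)) \ll F^{1/30}L^{1-5/30}+F^{-1}L = F^{1/30}L^{5/6}+F^{-1}L$, which is exactly the claim. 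This makes the proof a one-line application once the normalization of the derivative hypotheses is matched to the statement of the fifth-derivative test.
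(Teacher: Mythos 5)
Your strategy coincides with the paper's, whose entire proof is ``This is Theorem~2.9 in \cite{GK} with $q=3$'' --- the $q$-th derivative van der Corput estimate stated precisely under the hypothesis $|h^{(k)}|\asymp FL^{-k}$ for $1\le k\le q+2$, which with $q=3$ gives $F^{1/(2^5-2)}L^{1-5/(2^5-2)}+F^{-1}L=F^{1/30}L^{5/6}+F^{-1}L$ in one line. Two small corrections to your write-up: the exponent pair $(1/30,13/15)$ is $A^3B(0,1)$, i.e.\ one $B$-process followed by \emph{three} $A$-processes (note $A$ fixes the trivial pair $(0,1)$, so ``$A^2$ then $B$'' just reproduces $(1/2,1/2)$); and the bare fifth-derivative test under the single hypothesis $|h^{(5)}|\asymp FL^{-5}$ yields a second term of shape $F^{-1/30}L^{25/24}$ rather than $F^{-1}L$, which is why the lemma assumes control of all derivatives $1\le k\le5$ and why GK Theorem~2.9 (which exploits the full range) is the cleanest reference for exactly the displayed bound.
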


\begin{proof}
  This is Theorem 2.9 in \cite{GK} with $q=3$.
\end{proof}

\begin{proof}[Proof of Proposition \ref{prop:B} by assuming Theorem \ref{thm:3}]
  If $L\gg T^{44/25}X^{-3/5}$, then by Lemma \ref{Lemma:FI} with $F=T$, the Cauchy--Schwarz inequality for the $m$-sum, and \eqref{eqn:RS3}, we have
  \begin{align*}
    \mathcal B(L,M;v') & \leq  \sum_{m\asymp M}  |A(1,m)|  \Big| \sum_{L<\ell\leq 2L}  e\left(- 4(\ell mX)^{1/4} + \frac{v'}{2\pi} \log \ell \right) \Big| \\
    & \ll T^\varepsilon M (T^{1/30} L^{5/6} + T^{-1} L) \\
    & \ll T^\varepsilon \frac{T^4}{X} (T^{1/30} L^{-1/6} + T^{-1})
    \ll T^{187/50+\varepsilon} X^{-9/10}.
  \end{align*}
  If $L\ll T^{44/25}X^{-3/5}$, then
  \[
    M\asymp N/L \gg (T^{4}/X)(T^{44/25}X^{-3/5})^{-1} \asymp T^{56/25}/X^{2/5} \geq T^{6/5}
  \]
  provided $T\geq X^{5/13}$. Note that $M\leq N \ll T^{4}/X \ll T^{8/5}$ provided $T\leq X^{5/12}$.
  By the first claim in Theorem \ref{thm:3} we have
  \begin{align*}
    \mathcal B(L,M;v') & \leq  \sum_{L<\ell\leq 2L} \Big| \sum_{m\geq1}  A(1,m) m^{iv'} W\left(\frac{m}{M}\right)  e\left(- 4(\ell mX)^{1/4}\right) \Big| \\
    & \ll L T^{3/10+\varepsilon} M^{3/4} \ll T^{3/10+\varepsilon}L^{1/4} N^{3/4}
    \ll T^{187/50+\varepsilon} X^{-9/10}.
  \end{align*}
  This completes the proof of Proposition \ref{prop:B}.
\end{proof}

\begin{proof}[Proof of Proposition \ref{prop:I} (iii)]
  For $X^{5/13} \leq T \leq X^{5/12}$, by \eqref{eqn:I<<sum}, \eqref{eqn:B(N)<<B(LM)}, and Proposition \ref{prop:B}, we have
  \[
    \mathcal I \ll T^\varepsilon
    \sup_{N\asymp T^{4}/X}
    \frac{T^{1/2}}{ N^{1/2}}
    T^{187/50+\varepsilon} X^{-9/10}
    + T^{1+\varepsilon}
    \ll T^{56/25+\varepsilon} X^{-2/5}
  \]
  as claimed.
\end{proof}

\section{Proof of Theorem \ref{thm:RS}}\label{sec:proofRS}

The proof is standard once we have Theorem \ref{thm:1+3}. For completeness, we include the proof here.
Let $\phi$ be a $\GL(2)$ Hecke--Maass cusp form for $\SL(2,\mathbb{Z})$ with its $n$-th Hecke eigenvalue $\lambda_\phi(n)$.
Note that $\phi\times \phi = 1\boxplus \Sym^2 \phi$.
By Theorem \ref{thm:1+3} with $\Phi=\Sym^2 \phi$,  we have
\begin{equation}\label{eqn:SumRS}
  \sum_{n\leq X} \lambda_{\phi\times\phi}(n) = L(1,\Sym^2 \phi) X + O(X^{3/5-\delta+o(1)}).
\end{equation}
Moreover, by \eqref{eqn:RS=1+Sym2}, we have
\[
  \lambda_\phi(n)^2 = \mathop{\sum\sum}_{\ell^2m=n} \mu(\ell) \lambda_{\phi\times\phi}(m).
\]
Hence by \eqref{eqn:SumRS} we have
\begin{align*}
  \sum_{n\leq X} \lambda_\phi(n)^2
  & = \mathop{\sum\sum}_{\ell^2m\leq X} \mu(\ell) \lambda_{\phi\times\phi}(m) \\
  & = \sum_{\ell\leq X^{1/2}} \mu(\ell)  \left( L(1,\Sym^2 \phi) \frac{X}{\ell^2} + O(X^{3/5-\delta+o(1)}\ell^{-6/5+2\delta}) \right) \\
  & = \frac{L(1,\Sym^2 \phi)}{\zeta(2)} X + O(X^{3/5-\delta+o(1)}),
\end{align*}
provided $\delta<1/10$. This completes the proof of Theorem \ref{thm:RS}.

\section{Proof of Theorem \ref{thm:3}}\label{sec:proof3}

In this section we prove Theorem \ref{thm:3}. We will not use the exact expression of $\varphi$ until the end of the proof.

\subsection{Applying the delta method}\label{subsec:delta}

We apply \eqref{eqn:delta-n} to $\mathscr S(N)$ as a device to separate the variables.
By \eqref{eqn:delta-n} with some large $Q$,  we get that $\mathscr S(N)$ is equal to
\begin{multline*}
    \sum_{m\geq1} \sum_{n\geq1} A(1,n) e\left(T \varphi\left(\frac{m}{N}\right)\right)V\Big(\frac{m}{N}\Big) W\Big(\frac{n}{N}\Big) \delta(m-n)
  \\
  =
   \sum_{m\geq1} \sum_{n\geq1} A(1,n) e\left(t \varphi\left(\frac{m}{N}\right)\right)V\Big(\frac{m}{N}\Big) W\Big(\frac{n}{N}\Big) \\
 \cdot \frac{1}{Q}\sum_{1\leq q\leq Q} \;\frac{1}{q}\; \sideset{}{^\star}\sum_{a\bmod{q}}e\left(\frac{(m-n)a}{q}\right) \int_\mathbb{R}g(q,x) e\left(\frac{(m-n) x}{qQ}\right)\mathrm{d}x .
\end{multline*}
Here $W$ is a fixed smooth function such that $W(x)=1$ if $x\in[1/2,1]$, $\supp W\subset [1/4,2]$, and $W^{(k)}\ll 1$.
By \eqref{eqn:RS3} and \eqref{eqn:g-h}, we have
\begin{multline*}
  \mathscr S(N)
   = \int_{\mathbb{R}} U\left(\frac{x}{T^\varepsilon}\right) \frac{1}{Q}\sum_{1\leq q\leq Q} \;\frac{g(q,x)}{q}\; \sideset{}{^\star}\sum_{a\bmod{q}} \sum_{m\geq1} e\left(T \varphi\left(\frac{m}{N}\right)\right) e\left(\frac{ma}{q}\right) V\Big(\frac{m}{N}\Big) e\left(\frac{m x}{qQ}\right)
  \\
 \cdot \sum_{n\geq1} A(1,n) e\left(\frac{-na}{q}\right) W\Big(\frac{n}{N}\Big) e\left(\frac{-nx}{qQ}\right)
 \dd x + O(T^{-A}),
\end{multline*}
where $U$ is a smooth positive function with $U(x)=1$ if $x\in[-1,1]$, supported in $[-2,2]$ and satisfying $U^{(j)}(x)\ll_j 1$.

\subsection{Applications of summation formulas} \label{subsec:summation_formula}

Now we apply the Poisson summation formula to the $m$-sum in $\mathscr S(N)$, getting
\begin{align*}
  \textrm{$m$-sum}
  & = \sum_{b \bmod{q}}e\left(\frac{ab}{q}\right)  \sum_{m\equiv b\bmod{q}} e\left(T \varphi\left(\frac{m}{N}\right)\right)
   V\left(\frac{m}{N}\right) e\left(\frac{m x}{qQ}\right) \\
  & = \frac{N}{q} \sum_{m\in\mathbb{Z}} \sum_{b \bmod{q}} e\left(\frac{(m+a)b}{q}\right)  \int_{\mathbb{R}} V\left(y\right) e\left(T\varphi(y)+\frac{Nxy}{qQ}\right) e\left(-\frac{mN}{q}y\right) \dd y \\
  & = N \sum_{m \equiv a \bmod{q}} \mathcal V (m,q,x),
\end{align*}
where
\[
   \mathcal V (m,q,x) := \int_{\mathbb{R}} V\left(y\right) e\left(T\varphi(y)+\frac{Nxy}{qQ}+\frac{mN}{q}y\right) \dd y.
\]
Assume $Q \geq NT^{2\varepsilon-1}$. Then for $x\ll T^\varepsilon$, we have $\frac{Nx}{qQ} \ll T^{1-\varepsilon}$.
 Since $\varphi'(y)\varphi''(y) \neq 0$ if $y\in(1/4,2)$, by repeated integration by parts we know that we can truncate $m$-sum at $|m| \asymp q T /N$, in which case
 we have $(T\varphi(y)+\frac{Nxy}{qQ}+\frac{mN}{q}y)'' = T\varphi''(y)\gg T$, and hence
\begin{equation}\label{eqn:V<<}
  \mathcal V (m,q,x) \ll T^{-1/2},
\end{equation}
by the second derivative test (see e.g. Huxley \cite[Lemma 5.1.3]{Huxley}).
Hence we can restrict $q$-sum with $N/T \ll q \leq Q$.

Now we consider the $n$-sum. Note that we have $q\gg N/T$.
Let $\psi_x(n)=W\left(\frac{n}{N}\right) e\left(\frac{-n x}{qQ}\right)$.
By Lemma \ref{lemma:VSF}, we have
\begin{equation*}
  \textrm{$n$-sum} = \frac{q\pi^{3/2}}{2} \sum_{\pm} \sum_{n_1|q} \sum_{n_2=1}^{\infty}
  \frac{A(n_2,n_1)}{n_1n_2}
  S\left(-\bar{m},\pm n_2;\frac{q}{n_1}\right)
  \Psi_x^{\pm}\left(\frac{n_1^2n_2}{q^3}\right),
\end{equation*}
where $\Psi_x^{\pm}$ is defined as in \eqref{eqn:Psi} with $\psi=\psi_x$.
Hence we have
\begin{multline*}
  \mathscr S(N)
   = \int_{\mathbb{R}} U\left(\frac{x}{T^\varepsilon}\right) \frac{N}{Q}\sum_{1\leq q\leq Q} g(q,x) \sum_{\substack{|m| \asymp q T /N\\(m,q)=1}} \mathcal V (m,q,x)
  \\
 \cdot \frac{\pi^{3/2}}{2} \sum_{\pm} \sum_{n_1|q} \sum_{n_2=1}^{\infty}
  \frac{A(n_2,n_1)}{n_1n_2}
  S\left(-\bar{m},\pm n_2;\frac{q}{n_1}\right)
  \Psi_x^{\pm}\left(\frac{n_1^2n_2}{q^3}\right)
 \dd x + O(T^{-A}).
\end{multline*}

\subsection{Analysis of the integrals} \label{subsec:integrals}

In this subsection we want to consider $\Psi^\pm_x(z)$. We prove the following lemma.
\begin{lemma}\label{lemma:Psi}
  Let $Y\in\mathbb{R}$ and $N\geq1$. Let $T\geq1$ be sufficiently large. Let $\psi(n)=W\left(n/N\right)e\left(-Yn/N\right)$, where $W$ is a fixed smooth function, compactly supported on $[1,2]$. Define $\Psi^{\pm}$ as in \eqref{eqn:Psi}. Then we have
  \begin{itemize}
    \item [(i)] If $zN\gg T^{\varepsilon}$, then $\Psi^\pm$ is negligibly small unless $\sgn(Y) = \pm$ and $\pm Y \asymp (zN)^{1/3}$, in which case we have
        \begin{equation}\label{eqn:Psi=1}
            \Psi^\pm(z) = e\left(\pm 2 \frac{(zN)^{1/2}}{(\pm Y)^{1/2}}\right) (zN)^{1/2} w\left(\frac{(zN)^{1/2}}{(\pm Y)^{3/2}}\right) + O(T^{-A}) \ll (zN)^{1/2},
        \end{equation}
        where $w$ is a certain compactly supported $1$-inert  function depending on $A$.
    \item [(ii)] If $zN\ll T^{\varepsilon}$ and $Y\gg T^\varepsilon$, then $\Psi^\pm(z) \ll_A T^{-A}$ for any $A>0$.
    \item [(iii)] If $zN\ll T^{\varepsilon}$ and $Y\ll T^\varepsilon$, then $\Psi^\pm(z) \ll T^{\varepsilon}$.
  \end{itemize}
\end{lemma}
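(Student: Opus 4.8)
\textbf{Proof proposal for Lemma \ref{lemma:Psi}.}

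The plan is to analyze $\Psi^\pm(z)$ directly from its defining Mellin--Barnes integral \eqref{eqn:Psi}, substituting the explicit Mellin transform $\tilde\psi(1-s)$ of $\psi(n)=W(n/N)e(-Yn/N)$ and then resolving the resulting oscillatory integral by stationary phase. First I would compute $\tilde\psi(s) = \int_0^\infty W(x/N)e(-Yx/N) x^{s}\,\tfrac{\dd x}{x}$; scaling $x\mapsto Nx$ gives $\tilde\psi(s) = N^s \int_0^\infty W(x) e(-Yx) x^s\,\tfrac{\dd x}{x}$, a one-dimensional exponential integral with phase $-2\pi Y x$ against a bump on $[1,2]$. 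When $|Y|\ll T^\varepsilon$ this is $O(N^{\Re(s)})$ with no oscillation; when $|Y|\gg T^\varepsilon$, repeated integration by parts shows it decays faster than any power of $|Y|$ unless there is a stationary point, which there is not for the pure exponential — so in fact $\tilde\psi(1-s)$ itself is small in case (ii), but more care is needed because the $s$-integral and the Gamma factors $\gamma^\pm(s)$ also carry size. The honest approach is to \emph{not} split off $\tilde\psi$ first but rather to open it up, interchange the $s$ and $x$ integrals, and evaluate the $s$-integral (which is essentially an inverse Mellin transform of the $\GL(3)$ Gamma factor, i.e.\ a Bessel-type kernel) using Stirling, exactly as in the proof of Lemma \ref{lemma:Psi=M+O}. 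Concretely, combining the asymptotic $\Psi^\pm(z) = z\int_0^\infty \psi(y)\sum_{\ell} \gamma_\ell (zy)^{-\ell/3} e(\pm 3(zy)^{1/3})\,\dd y + O((zN)^{1-L/3})$ from Lemma \ref{lemma:Psi=M+O} with $\psi(y)=W(y/N)e(-Yy/N)$ and scaling $y\mapsto Ny$ reduces everything to
\[
  \Psi^\pm(z) = zN \int_0^\infty W(y) e\big(\pm 3(zNy)^{1/3} - Yy\big) \sum_{\ell=1}^{L} \frac{\gamma_\ell}{(zNy)^{\ell/3}}\,\dd y + O\big((zN)^{1-L/3}\big),
\]
valid for $zN\gg 1$.

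The heart of the matter is then the stationary phase analysis of $g(y) = \pm 3(zN)^{1/3} y^{1/3} - Yy$ on $[1,2]$. We have $g'(y) = \pm (zN)^{1/3} y^{-2/3} - Y$ and $g''(y) = \mp \tfrac{2}{3}(zN)^{1/3} y^{-5/3}$. For case (i), $zN\gg T^\varepsilon$: if $\pm = \sgn(Y)$ fails, or $|Y|$ is not of size $(zN)^{1/3}$, then $|g'(y)|\gg \max(|Y|,(zN)^{1/3})$ uniformly on $[1,2]$ and the higher derivatives are controlled, so Lemma \ref{lemma:stationary_phase}(i) (with $X = T^{O(\varepsilon)}$, or just plain non-stationary-phase integration by parts) gives negligible contribution; the $O((zN)^{1-L/3})$ tail is made negligible by taking $L$ large since $zN\gg T^\varepsilon$. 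If $\pm=\sgn(Y)$ and $\pm Y\asymp (zN)^{1/3}$, the unique stationary point is $y_0 = ((zN)^{1/3}/(\pm Y))^{3/2} = (zN)^{1/2}(\pm Y)^{-3/2}\asymp 1$, with $g(y_0) = \pm 2(zN)^{1/3} y_0^{1/3} = \pm 2 (zN)^{1/2}(\pm Y)^{-1/2}$ and $|g''(y_0)|\asymp (zN)^{1/3}\asymp |Y|$. Applying Lemma \ref{lemma:stationary_phase}(ii) to the $2\pi$-scaled phase (so the relevant $Y$-parameter there is $\asymp (zN)^{1/3}$ and $X = T^{O(\varepsilon)}$, giving $Y/X^2\geq 1$ as required since $zN\gg T^\varepsilon$), the main term is $zN \cdot (zN)^{-1/4} \cdot ((zN)^{1/3})^{-1/2} \cdot (\text{inert})$, and tracking the powers together with the prefactor $(zNy)^{-\ell/3}$ (the $\ell=1$ term dominating, the rest being lower order and absorbable into the inert function or into $O(T^{-A})$) yields exactly the claimed shape $e(\pm 2(zN)^{1/2}(\pm Y)^{-1/2}) (zN)^{1/2} w((zN)^{1/2}(\pm Y)^{-3/2}) + O(T^{-A})$, with the inert function supported where $y_0\asymp 1$, i.e.\ where $(zN)^{1/2}(\pm Y)^{-3/2}\asymp 1$; the crude bound $\ll (zN)^{1/2}$ follows. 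For case (ii), $zN\ll T^\varepsilon$ and $Y\gg T^\varepsilon$: here Lemma \ref{lemma:Psi=M+O} is not directly available (it needs $zN\gg1$), so I would go back to \eqref{eqn:Psi} directly, noting $\tilde\psi(1-s) = N^{1-s}\int W(x)e(-Yx)x^{-s}\,\tfrac{\dd x}{x}$ which by repeated integration by parts is $O_A((N/|Y|^{?})\cdot |Y|^{-A})$... more precisely $\ll_A N^{1-\Re(s)}|Y|^{-A}(1+|s|)^{A}$ since each integration by parts in $x$ gains a factor $|Y|^{-1}$ at the cost of $|s|$; then moving the $s$-contour and bounding $\gamma^\pm(s)$ by Stirling (polynomial in $|s|$, with the $|t|^{\cdots}$ growth offset by the $(\pi^3 z)^{-s}$ with $zN\ll T^\varepsilon$, i.e.\ $z\ll T^\varepsilon/N$) and truncating $|\Im s|\ll T^\varepsilon$ gives a bound $\ll (zN)\cdot|Y|^{-A}\cdot T^{O(\varepsilon)}\ll_A T^{-A}$. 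For case (iii), $zN\ll T^\varepsilon$ and $Y\ll T^\varepsilon$: the same direct estimate on \eqref{eqn:Psi} but now without oscillation in $x$ gives $\tilde\psi(1-s)\ll N^{1-\Re(s)}$, and bounding $\gamma^\pm(s)$ trivially on a truncated vertical contour (with $\sigma = \Re(s)$ chosen, say, slightly above $5/14$, and using $zN\ll T^\varepsilon$ to control $(\pi^3 z)^{-\sigma} = z^{-\sigma}\ll (T^\varepsilon/N)^{-\sigma}$, which combines with $N^{1-\sigma}$ to give $N (zN)^{-\sigma}\ll N \cdot T^{O(\varepsilon)}$ — here one must be a little careful that $z$ could be tiny, but $\Psi^\pm(z)$ carries an overall factor $z$, and choosing $\sigma$ close to $0$ or even pushing the contour using the known analytic continuation keeps everything $O(T^\varepsilon)$) yields $\Psi^\pm(z)\ll T^\varepsilon$.

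The main obstacle I anticipate is \textbf{case (ii)} (and the tail in case (iii)): when $zN\ll T^\varepsilon$ one cannot invoke the clean Bessel asymptotic of Lemma \ref{lemma:Psi=M+O}, so one must argue directly from the Mellin--Barnes integral \eqref{eqn:Psi}, and the bookkeeping of where to place the $s$-contour, how the polynomial growth of the three Gamma ratios in $\gamma^\pm(s)$ interacts with the $(\pi^3 z)^{-s}$ factor when $z$ is extremely small, and how many times one can profitably integrate by parts in $x$ before the powers of $|s|$ overwhelm the gain from $|Y|^{-1}$ — all require some care. A clean way to sidestep part of this is to observe that the effective range of $s$ is $|\Im s|\ll T^\varepsilon$ (from $\tilde\psi$ and $G$-type convergence), so $|s|^A\ll T^{A\varepsilon}$ and we may integrate by parts in $x$ roughly $A/\varepsilon$ times, beating any fixed power of $T$; this turns case (ii) into routine estimation. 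The second genuine (but standard) point is verifying the inertness hypotheses of Lemma \ref{lemma:stationary_phase} for the composite weight in case (i) — one needs that $W(y)(zNy)^{-\ell/3}$, together with the implied cutoffs from the non-stationary regions, forms a $T^{O(\varepsilon)}$-inert family in $y$ on $[1,2]$, which is immediate since $zN$ and $Y$ are fixed parameters and all $y$-derivatives of $y^{-\ell/3}$ on $[1,2]$ are $O(1)$.
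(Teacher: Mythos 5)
Your treatment of case (i) matches the paper's: both invoke Lemma \ref{lemma:Psi=M+O} and then resolve the resulting phase $\pm3(zNy)^{1/3}-Yy$ by non-stationary and stationary phase, and your tracking of the stationary point $y_0=(zN)^{1/2}(\pm Y)^{-3/2}$, the value $g(y_0)$, and the amplitude is correct. Your case (iii) is also essentially the paper's argument (there the effective $\tau$-range genuinely is $\ll T^\varepsilon$, because with $|Y|\ll T^\varepsilon$ the $\xi$-integral has no stationary point for $|\tau|\gg T^\varepsilon$).

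The gap is in case (ii), and it is exactly where you flagged the danger but then waved it away. The ``clean sidestep'' — asserting that the effective range of $s$ in \eqref{eqn:Psi} is $|\Im s|\ll T^\varepsilon$ ``from $\tilde\psi$ and $G$-type convergence'' — is false. There is no $G$-factor in \eqref{eqn:Psi}; you may be conflating it with the approximate functional equation. And $\tilde\psi(1-s)=N^{1-s}\int_1^2 W(\xi)e(-Y\xi)\xi^{-s}\,\dd\xi/\xi$ does not decay until $|\Im s|\gg|Y|$: the combined $\xi$-phase $-2\pi Y\xi-\tau\log\xi$ has a stationary point precisely when $\sgn(\tau)=-\sgn(Y)$ and $|\tau|\asymp|Y|$, so the effective $\tau$-range is $|\tau|\asymp|Y|\gg T^\varepsilon$, not $|\tau|\ll T^\varepsilon$. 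In that range $\tilde\psi(1-s)\asymp N^{1-\sigma}|Y|^{-1/2}$ (your bound $N^{1-\Re s}|Y|^{-A}(1+|s|)^A$ is then vacuous), and a trivial bound on the $\tau$-integral with $|\gamma^\pm(1/2+i\tau)|\asymp1$ produces $\Psi^\pm(z)\ll(zN)^{1/2}|Y|^{1/2}$, nowhere near $T^{-A}$. To actually obtain negligibility you must exploit the \emph{oscillation} of $\gamma^\pm(1/2+i\tau)$, not merely its size. That is what the paper does: after the first stationary phase in $\xi$ localizes $\tau\asymp|Y|$ and produces the phase $e^{i\tau-i\tau\log(-\tau/(2\pi Y))}$, one uses Stirling to write $\gamma^\pm(1/2+i\tau)=(|\tau|/2e)^{3i\tau}\Upsilon^\pm(\tau)+O(T^{-A})$, and then observes the total $\tau$-phase $h_4(\tau)=-\tau+\tau\log\frac{\tau}{2\pi Y}+\tau\log(\pi^3 zN)-3\tau\log\frac{\tau}{2e}$ has $h_4'(\tau)=\log\frac{4\pi^2 e\,zN}{Y\tau^2}$, which has absolute value $\gg\varepsilon\log T$ because $zN\ll T^\varepsilon$ while $Y\tau^2\gg T^{3\varepsilon}$. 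This second non-stationary phase is the step that actually delivers the $O_A(T^{-A})$ in (ii), and it has no analogue in your proposal.
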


\begin{proof}
If $zN\gg T^{\varepsilon}$, then  by Lemma \ref{lemma:Psi=M+O}  we have
\begin{align*}
  \Psi^\pm(z) & = z \int_0^\infty \psi(\xi) \sum_{\ell=1}^{L} \frac{\gamma_\ell}{(z \xi)^{\ell/3}}
  e\left(\pm3(z \xi)^{1/3}\right) \dd \xi
  + O\left((z N)^{1-L/3}\right) \\
  & = (zN)^{2/3} \int_0^\infty
  W\left(\xi\right) \sum_{\ell=1}^{L} \frac{\gamma_\ell (z N)^{(1-\ell)/3}}{\xi^{\ell/3}}
  e\left(-Y \xi \pm3(z N \xi)^{1/3}\right) \dd \xi
  + O\left((z N)^{1-L/3}\right).
\end{align*}
Let
\[
  h_2(\xi) = - 2\pi Y \xi \pm  6\pi (z N \xi)^{1/3}.
\]
Then we have
\[
  h_2'(\xi) = - 2\pi Y  \pm  2\pi (z N)^{1/3}  \xi^{-2/3},
\]
\[
  h_2''(\xi) = \mp \frac{4}{3} \pi (z N)^{1/3}  \xi^{-5/3}, \quad
  h_2^{(j)}(\xi) \asymp_j (z N)^{1/3} , \quad j\geq 2.
\]
For $\xi\asymp 1$, if $\pm \sgn(Y) = -1$, then $h_2'(\xi)\gg (zN)^{1/3}\gg T^\varepsilon$. By repeated integration by parts we obtain $\Psi^\pm \ll T^{-A}$ for any $A>0$. Assume $\sgn(Y)=\pm$. The solution of $h_2'(\xi)=0$ is $\xi_0=\frac{(zN)^{1/2}}{(\pm Y)^{3/2}}$.
Note that $h_2(\xi_0)=\pm 4\pi \frac{(zN)^{1/2}}{(\pm Y)^{1/2}}$.
By the stationary phase method (Lemma \ref{lemma:stationary_phase}), we prove Lemma \ref{lemma:Psi} (i).


Now we consider the case  $zN\ll T^\varepsilon$ and $Y \gg T^{\varepsilon}$. Note that
\begin{align*}
  \Psi^\pm(z) & = z \frac{1}{2\pi i} \int_{(1/2)} (\pi^3 z)^{-s} \gamma^\pm(s)  \int_{0}^{\infty} W\left(\frac{u}{N}\right) e\left(\frac{-u Y}{N}\right) u^{-s} \dd u  \dd s \\
  & = (zN)^{1/2} \frac{1}{2\pi^{5/2} } \int_{\mathbb{R}} (\pi^3 zN)^{-i\tau} \gamma^\pm(1/2+i\tau) \int_{0}^{\infty} W\left(\xi\right) e\left(-Y \xi \right) \xi^{-1/2-i\tau} \dd \xi  \dd \tau.
\end{align*}
We first consider the $\xi$-integral above:
\[
  \int_{0}^{\infty} W\left(\xi\right) e\left(-Y \xi \right) \xi^{-1/2-i\tau} \dd \xi
  =
  \int_{0}^{\infty} W\left(\xi\right)\xi^{-1/2}  e^{i(-2\pi Y \xi -\tau \log \xi)} \dd \xi.
\]
Let
\[
  h_3(\xi) = -2\pi Y \xi -\tau \log \xi.
\]
Then we have
\[
  h_3'(\xi) = -2\pi Y  -\tau/\xi, \quad
  h_3''(\xi) = \tau/\xi^2, \quad
  h_3^{(j)}(\xi) \asymp_j |\tau|, \quad j\geq 2.
\]
By repeated integration by parts we obtain $\Psi^\pm \ll T^{-A}$ for any $A>0$ unless $\sgn(\tau)=-\sgn(Y)$ and $|\tau|\asymp |Y|$ which we assume now.
The solution of $h_3'(\xi)=0$ is $\xi_0=\frac{-\tau}{2\pi Y}$.
Note that $h_3(\xi_0)=\tau-\tau \log \frac{-\tau}{2\pi Y}$.
By Lemma \ref{lemma:stationary_phase} (ii) for the $\xi$-integral, we obtain
\[
  \textrm{$\xi$-integral} = \frac{e^{i\tau - i\tau \log \frac{-\tau }{2\pi Y}}} {\sqrt{Y}} w_1\left(\frac{-\tau }{Y}\right) + O(T^{-A}),
\]
where $w_1$ is a compactly supported $1$-inert function.
Since $\tau \gg T^{\varepsilon}$, by Stirling's formula, we have
\[
  \gamma^\pm(1/2+i\tau) = \left(\frac{|\tau|}{2e}\right)^{3i\tau} \Upsilon^\pm(\tau) + O(T^{-A}),
\]
where $(\Upsilon^\pm)^{(k)}(\tau)\ll |\tau|^{-k}$ for $k\geq0$.
Hence to bound $\Psi^\pm(z)$, we need to consider (making a change of variable $\tau\rightsquigarrow -\tau$)
\[
  \int_{\mathbb{R}} w_1\left(\frac{\tau}{Y}\right) \Upsilon^\pm(-\tau) e^{-i\tau + i\tau \log \frac{\tau}{2\pi Y}+i\tau \log (\pi^3 zN) - 3i\tau \log \frac{\tau}{2e} } \dd \tau.
\]
Let
\[
  h_4(\tau) = -\tau + \tau \log \frac{\tau}{2\pi Y}+ \tau \log (\pi^3 zN) - 3 \tau \log \frac{\tau}{2e}.
\]
Then we have
\[
  h_4'(\tau) = \log \frac{\tau}{2\pi Y}
  + \log (\pi^3 zN) - 3 \log \frac{\tau}{2e} - 2
  = \log \frac{4 \pi^2 e  zN }{Y\tau^2} \gg \varepsilon \log T,
\]
\[
  h_4''(\tau) = -2/\tau, \quad   h_4^{(j)}(\tau) \ll_j  |\tau|^{1-j}, \quad j\geq2.
\]
Hence $\Psi^\pm(z)$ is negligibly small by Lemma \ref{lemma:stationary_phase} (i).

Finally we handle the case $zN\ll T^\varepsilon$ and  $Y \ll T^{\varepsilon}$. By the first derivative test for the $\xi$-integral, we know that
$\Psi^\pm(z)$ is negligible unless $|\tau|\ll T^\varepsilon$. Hence $\Psi^\pm(z) \ll T^\varepsilon$.
\end{proof}

We now break the $q$-sum into dyadic segments $R< q \leq 2R$ with $N/T\ll R \ll Q$ and insert a smooth partition of unity for the $x$-integral and absorb the weight function $U(x/T^\varepsilon)$, getting
\[
  \mathscr S(N) \ll N^\varepsilon \sup_{T^{-100}\leq X \leq T^\varepsilon}
     \sup_{N/T\ll R \ll Q}  |\mathscr S^\pm(N;X,R)| + T^{-20},
\]
where
\begin{multline*}
  \mathscr S^\pm(N;X,R) :=
   \int_{\mathbb{R}} W\left(\frac{\pm x}{X}\right)\frac{N}{Q}\sum_{q\sim R} g(q,x)
   \sum_{\substack{|m| \asymp q T /N\\(m,q)=1}} \mathcal V (m,q,x)
  \\
 \cdot  \sum_{\pm} \sum_{n_1|q} \sum_{n_2=1}^{\infty}
  \frac{A(n_2,n_1)}{n_1n_2}
  S\left(-\bar{m},\pm n_2;\frac{q}{n_1}\right)
  \Psi_x^{\pm}\left(\frac{n_1^2n_2}{q^3}\right)
 \dd x,
\end{multline*}
for some compactly supported $1$-inert function $W$.

If $R\leq Q T^{-\varepsilon}$ and $X\leq T^{-\varepsilon}$, then by \eqref{eqn:g-h} we can replace $g(q,x)$ by 1 with a negligible error term.
Hence we obtain
\begin{align}\label{eqn:Sneq_error1}
  \mathscr S^\pm(N;X,R) = \mathscr S_1^\pm(N;X,R)
  + O(T^{-A})  ,
\end{align}
where
\begin{multline}\label{eqn:S1}
  \mathscr S_1^\pm(N;X,R) :=
   \frac{N}{Q}\sum_{q\sim R} \int_{\mathbb{R}} W_q\left(\frac{\pm x}{X}\right) \sum_{\substack{m\asymp qT/N \\ (m,q)=1}} \mathcal V (m,q,x)
  \\
 \cdot \sum_{\sigma\in\{\pm\}} \sum_{n_1|q} \sum_{n_2=1}^{\infty}
  \frac{A(n_2,n_1)}{n_1n_2}
  S\left(-\bar{m},\sigma n_2;\frac{q}{n_1}\right)
  \Psi_x^{\sigma}\left(\frac{n_1^2n_2}{q^3}\right)
 \dd x,
\end{multline}
with $W_q\left(\frac{\pm x}{X}\right)=W\left(\frac{\pm x}{X}\right)$ if $R\leq Q T^{-\varepsilon}$ and $X\leq T^{-\varepsilon}$, and
$W_q\left(\frac{\pm x}{X}\right)=W\left(\frac{\pm x}{X}\right)g(q,x)$ otherwise.

We first assume that  $NX/RQ\gg T^{2\varepsilon}$.
If $n_1^2n_2N/q^3 \gg T^\varepsilon$, then by \eqref{eqn:Psi=1} we have
\[
  \Psi_x^{\sigma}\left(\frac{n_1^2n_2}{q^3}\right)
  = e\left(2 \sigma \frac{(n_1^2n_2 Q)^{1/2}}{q(\sigma x)^{1/2}}\right) \left(\frac{n_1^2n_2 N}{q^3}\right)^{1/2} w_{2}\left(\frac{(n_1^2n_2 N)^{1/2}}{(\sigma Nx/Q)^{3/2}}\right) + O(T^{-A}),
\]
for some compactly supported $1$-inert  function $w_2$.
Hence the contribution to $\mathscr S_1^\pm(N;X,R)$ is negligible unless $\sigma = \sgn (x)$.
Thus in this case, up to a negligible error term, the contribution to $\mathscr S_1^\pm(N;X,R)$ is equal to
\begin{multline*}
   \frac{N^{3/2}}{Q}
   \sum_{q\sim R} \int_{\mathbb{R}} W_q\left(\frac{\pm x}{X}\right) \;\frac{1}{q^{3/2}}\; \sum_{\substack{m\asymp qT/N \\ (m,q)=1}} \mathcal V (m,q,x)
  \\
 \cdot \sum_{n_1|q} \sum_{n_2=1}^{\infty}
  \frac{A(n_2,n_1)}{n_2^{1/2}}
  S\left(-\bar{m},\pm n_2;\frac{q}{n_1}\right)
  e\left(\pm 2\frac{(n_1^2n_2 Q)^{1/2}}{q(\pm x)^{1/2}}\right) w_{2}\left(\frac{(n_1^2n_2 N)^{1/2}}{(\pm Nx/Q)^{3/2}}\right)
 \dd x.
\end{multline*}
Making a change of variable $\pm x / X \mapsto v$, we arrive at
\begin{multline*}
   \pm \frac{N^{3/2} X}{Q}
   \sum_{q\sim R} \int_{\mathbb{R}} W_q\left(v\right) \;\frac{1}{q^{3/2}}\; \sum_{\substack{m\asymp qT/N \\ (m,q)=1}}
   \int_{\mathbb{R}} V\left(y\right) e\left(T\varphi(y)\pm\frac{NXvy}{qQ}+\frac{mN}{q}y\right) \dd y
  \\
 \cdot \sum_{n_1|q} \sum_{n_2=1}^{\infty}
  \frac{A(n_2,n_1)}{n_2^{1/2}}
  S\left(-\bar{m},\pm n_2;\frac{q}{n_1}\right)
  e\left(\pm 2\frac{(n_1^2n_2 Q)^{1/2}}{q(Xv)^{1/2}}\right) w_{2}\left(\frac{(n_1^2n_2 N)^{1/2}}{(NXv/Q)^{3/2}}\right)
 \dd v,
\end{multline*}
which is equal to
\begin{multline}\label{eqn:S+-}
   \pm \frac{N^{3/2} X}{Q}
   \sum_{q\sim R}  \;\frac{1}{q^{3/2}}\; \sum_{\substack{m\asymp qT/N \\ (m,q)=1}} \int_{\mathbb{R}} V\left(y\right) e\left(T\varphi(y)+\frac{mN}{q}y\right)
   \\
 \cdot
 \sum_{n_1|q} \sum_{n_2 \asymp N^2 X^3/n_1^2 Q^3}
  \frac{A(n_2,n_1)}{n_2^{1/2}}
  S\left(-\bar{m},\pm n_2;\frac{q}{n_1}\right)
  \\
 \cdot
  \int_{\mathbb{R}} W_q\left(v\right)
  e\left(\pm\frac{NXvy}{qQ} \pm 2\frac{(n_1^2n_2 Q)^{1/2}}{q(Xv)^{1/2}}\right)
  w_{2}\left(\frac{(n_1^2n_2 N)^{1/2}}{(NXv/Q)^{3/2}}\right)
 \dd v \dd y ,
\end{multline}
Let
\[
  h_5(v) = \pm\frac{NXvy}{qQ} \pm 2\frac{(n_1^2n_2 Q)^{1/2}}{q(Xv)^{1/2}}.
\]
Then we have
\[
  h_5'(v) = \pm\frac{NXy}{qQ} \mp \frac{(n_1^2n_2 Q)^{1/2}}{q X^{1/2}} v^{-3/2},
\]
\[
  h_5''(v) = \pm  \frac{3(n_1^2n_2 Q)^{1/2}}{2q X^{1/2}} v^{-5/2}, \quad
  h_5^{(j)}(v) \asymp_j \frac{(n_1^2n_2 Q)^{1/2}}{q X^{1/2}}, \quad j\geq2.
\]
By repeated integration by parts we know that the $v$-integral is negligibly small unless
$\frac{NX}{RQ} \asymp  \frac{(n_1^2n_2 Q)^{1/2}}{R X^{1/2}}$, i.e., $n_1^2n_2 \asymp \frac{N^2 X^3}{Q^3}$, which we assume now.
The solution of $h_5'(v)=0$ is $v_0 = \frac{(n_1^2n_2)^{1/3} Q}{X (Ny)^{2/3}} $.
Note that  $h_5(v_0) = \pm 3 \frac{ (n_1^2 n_2 Ny)^{1/3} }{q}$.
By the stationary phase method (Lemma \ref{lemma:stationary_phase} (ii)), we have
\[
  \textrm{$v$-integral} =
    w\left(\frac{(n_1^2n_2)^{1/3} Q}{N^{2/3}X y^{2/3}}\right)
  \frac{(qQ)^{1/2}}{(NXy)^{1/2}}
   e\left(\pm 3 \frac{ (n_1^2 n_2 Ny)^{1/3} }{q}\right) + O(T^{-A}),
\]
where the function $w$ is some compactly supported $T^\varepsilon$-inert function depending on $q$ and $A$.
Thus it suffices to consider
\begin{equation}\label{eqn:S+-<<}
  \frac{N X^{1/2}}{Q^{1/2}}
   \sum_{q\sim R}  \;\frac{1}{q}\; \sum_{\substack{m\asymp qT/N \\ (m,q)=1}}
 \sum_{n_1|q} \sum_{n_2 \asymp N^2 X^3/n_1^2 Q^3}
  \frac{A(n_2,n_1)}{n_2^{1/2}}
  S\left(-\bar{m},\pm n_2;\frac{q}{n_1}\right)
  \mathcal W(m,n,q),
\end{equation}
where
\begin{equation}\label{eqn:W}
  \mathcal W(m,n_2,q) := \int_{\mathbb{R}} \frac{V\left(y\right)}{y^{1/2}} e\left(T\varphi(y)+\frac{mN}{q}y \pm 3 \frac{ (n_1^2 n_2 Ny)^{1/3} }{q}\right)
  w\left(\frac{(n_1^2n_2)^{1/3} Q}{N^{2/3}X y^{2/3}}\right)
  \dd y .
\end{equation}
Note that $\frac{ (n_1^2 n_2 N)^{1/3} }{q}\asymp NX/RQ \ll T^\varepsilon N/RQ \ll T^{1+\varepsilon}/Q$. By the second derivative test (see e.g. Huxley \cite[Lemma 5.1.3]{Huxley}), we get
\begin{equation}\label{eqn:W<<}
  \mathcal W(m,n_2,q) \ll T^{-1/2+\varepsilon}.
\end{equation}

\subsection{Applying the Cauchy--Schwarz inequality}\label{subsec:Cauchy}

By applying the Cauchy--Schwarz inequality tothe $(n_1,n_2)$-sum and \eqref{eqn:RS3}, we know that \eqref{eqn:S+-<<} is bounded by
\begin{equation}\label{eqn:S<<T}
   \frac{N X^{1/2}}{Q^{1/2}}  \frac{N X^{3/2}}{Q^{3/2}}
   \mathcal T^{\pm}(N;X,R;M,W) ^{1/2}   ,
\end{equation}
where $\mathcal T^{\pm} = \mathcal T^{\pm}(N;X,R;M,W)$ is given by
\begin{equation*}
  \sum_{n_1=1}^{\infty} \sum_{n_2=1}^{\infty}
   \frac{1}{n_2} W\left(\frac{n_1^2n_2}{M}\right)
   \Big|
   \sum_{\substack{R<q\leq  2R \\ n_1\mid q}} \;\frac{1}{q}\; \sum_{\substack{m\asymp qT/N \\ (m,q)=1}}
  S\left(-\bar{m},\pm n_2;\frac{q}{n_1}\right)
  \mathcal W(m,n_2,q)
    \Big|^2,
\end{equation*}
for some $M\asymp N^2X^3/Q^{3}$ and smooth functions $W$ with $\supp W\subset [1,2]$.

\begin{lemma}\label{lemma:T}
  We have
  \[
  \mathcal T^{\pm} \ll T^\varepsilon \left( \frac{R}{N}
  +  \frac{T Q^{6}}{N^{7/2}X^{5/2}} \right).
  \]
\end{lemma}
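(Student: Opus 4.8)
The plan is to open up the square in $\mathcal T^{\pm}$, swap the order of summation so that the modulus-$q$ pair $(q,q')$ is on the outside and the pair $(n_1,n_2)$ with $n_1^2 n_2\asymp M$ is on the inside, and then apply the Poisson summation formula to the $n_2$-sum. First I would use $|S(-\bar m,\pm n_2;q/n_1)|\leq q/n_1$ trivially where needed, but more usefully I would keep the Kloosterman sum and, after expanding $|\cdots|^2$, write the $n_2$-sum (for fixed $n_1\mid q$, $n_1\mid q'$, and fixed $m\asymp qT/N$, $m'\asymp q'T/N$) as a congruence sum modulo $[q/n_1,q'/n_1]$ against the smooth weight $n_2^{-1}W(n_1^2 n_2/M)\mathcal W(m,n_2,q)\overline{\mathcal W(m',n_2,q')}$, whose $n_2$-derivatives are controlled by \eqref{eqn:W<<} together with the inertness of the weight $w$ in \eqref{eqn:W} (each $n_2$-derivative costs at most $T^\varepsilon/M$, the essential support length in $n_2$). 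Poisson in $n_2$ then produces a zero frequency (the diagonal) and nonzero frequencies.

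The diagonal term: the new $n_2$-variable being $0$ forces the standard main term, which after summing over $n_1$, $m$, $m'$, $q$, $q'$ and using $M\asymp N^2X^3/Q^3$, $R\ll Q$, $m\asymp qT/N\asymp RT/N$ should contribute $\ll T^\varepsilon R/N$ — this is the first term in the claimed bound. Here the gain is that the character sum over $m,m'$ collapses: the Kloosterman sums $S(-\bar m,\pm n_2;\cdot)$ with $n_2$ replaced by $0$-frequency contribution, combined with the $m$-averaging, give either a clean evaluation or an additional saving; I would track the dependence carefully so that the $q$-sum of length $R$ contributes linearly and the $m$-sum of length $RT/N$ is what survives in the numerator against the $N$ and $M$ normalisations.

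For the off-diagonal (nonzero dual frequency, call it $\tilde n$): the modulus is $[q/n_1,q'/n_1]\asymp R^2/n_1$ generically, so the dual variable ranges over $\tilde n\ll T^\varepsilon R^2/(n_1 M)$, and for each such $\tilde n$ the $n_2$-Poisson sum produces a complete exponential sum — a twisted Kloosterman-type sum in the numerator — of size $O((R/n_1)^{1+\varepsilon})$ by Weil, times the length $M/n_1^2$ over $n_1^2 n_2$, times the analytic weight of size $T^{-1+\varepsilon}$ from $|\mathcal W|^2$. Collecting the powers of $R$, $M$, $N$, $Q$, $T$ and summing over $n_1\mid q$ (divisor bound), over $q,q'\sim R$, and over $m,m'$ of length $RT/N$, I expect this to assemble into $\ll T^\varepsilon T Q^6/(N^{7/2}X^{5/2})$ after substituting $M\asymp N^2X^3/Q^3$ and $R\ll Q$; the high power of $Q$ comes precisely from the short length $M$ of the $n_2$-variable, which makes the dual sum long.

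The main obstacle I anticipate is the bookkeeping in the off-diagonal: one must be careful that the weight $\mathcal W(m,n_2,q)$ genuinely behaves like a $T^\varepsilon$-inert function in $n_2$ of essential length $M$ (this relies on $(n_1^2n_2 N)^{1/3}/q\ll T^{1+\varepsilon}/Q$ and on the second-derivative bound \eqref{eqn:W<<}, i.e. on the nonvanishing of $\varphi'\varphi''$), and that the $m$- and $m'$-averages are handled without losing the full length $RT/N$ — ideally one should also exploit the $m$-sum via its own Poisson/reciprocity step, but for the stated bound a clean Weil bound on the complete sum in $n_2$ after Poisson, together with trivial treatment of $m$, should suffice. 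A secondary point is to confirm that the boundary regime $NX/RQ\asymp T^{2\varepsilon}$ and the complementary regime $NX/RQ\ll T^{2\varepsilon}$ (treated separately before this lemma) do not feed back here, so that throughout we may assume $M\asymp N^2X^3/Q^3$ with $M\gg 1$.
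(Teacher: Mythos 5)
Your plan matches the paper's opening moves: open the square, interchange sums, and apply Poisson summation to the $n_2$-variable modulo $qq'/n_1^2$. The diagonal term analysis (giving $R/N$ via the condition $q=q'$, $n=0$ and the evaluation of the character sum $\mathfrak{C}$) and the character-sum bound $\mathfrak{C}(n,m,m';\hat q,\hat q')\ll \hat q\hat q'(\hat q,\hat q',n)$ are also consistent with what the paper does.

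However, the treatment of the off-diagonal has a genuine gap that makes the claimed bound unreachable by your route. Two problems compound. First, the dual frequency after Poisson does \emph{not} range only over $\tilde n\ll T^\varepsilon R^2/(n_1M)$: the weight $\mathcal W(m,n_2,q)$ in \eqref{eqn:W} is not inert in $n_2$ but oscillates through the term $e(\pm 3(n_1^2n_2Ny)^{1/3}/q)$ whose total phase variation is $\asymp NX/RQ\gg T^{2\varepsilon}$, so the dual range actually extends to $n\ll RQ^2/(NX^2)$, larger by the factor $NX/RQ$ (this is Lemma~\ref{lemma:J}(i) in the paper, proved by integration by parts on the $\xi$-integral $\mathfrak{J}$). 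Second, and more importantly, you estimate the analytic weight attached to each dual frequency by the trivial $|\mathcal W|^2\ll T^{-1+\varepsilon}$. The paper instead performs a full stationary-phase analysis of the dual integral $\mathfrak{J}(n,m,m';q,q')$: for $n$ in the bulk range $T^\varepsilon R^2Q^3/(N^2X^3)\ll n\ll RQ^2/(NX^2)$ it proves (Lemma~\ref{lemma:J}(ii)) the much stronger bound $\mathfrak{J}\ll T^{-1}(nM/R^2)^{-1/2}$, which requires evaluating $\mathcal W$ by stationary phase in $y$ using $\varphi(u)=u^\beta$ explicitly (expanding the critical point $y_0+y_1+y_2+y_3$ in powers of $B/A$ to third order), then doing a second stationary phase in the Poisson variable $\xi$. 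Without this extra $(nM/R^2)^{-1/2}$ saving, your bookkeeping yields $\mathcal T^\pm\ll T^\varepsilon\bigl(R/N + R^3TQ^2/(N^3X^2)\bigr)$, and after optimizing $Q$ this gives $\mathscr S(N)\ll N^{3/4}T^{3/8+\varepsilon}$ rather than $N^{3/4}T^{3/10+\varepsilon}$ — indeed, $N^{3/4}T^{3/8}$ is not even nontrivial at the bottom of the range $N\asymp T^{6/5}$. So the key missing idea is not a Poisson/reciprocity step in $m$ (as you speculate) but the explicit two-stage stationary-phase evaluation of $\mathfrak{J}$, which is where the paper actually expends most of its effort.
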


\begin{proof}[Proof of Theorem \ref{thm:3}]
By \eqref{eqn:S<<T}, we know the contribution to  $\mathscr S_1^\pm(N;X,R)$ is bounded by
\begin{equation}\label{eqn:S^pm<<l}
  T^\varepsilon \frac{N^{2}X^2}{Q^{2}}
  \left(\frac{R}{N} + \frac{T Q^{6}}{N^{7/2}X^{5/2}} \right)^{1/2}
  \ll T^\varepsilon \frac{N^{3/2}}{Q^{3/2}}
  + T^\varepsilon N^{1/4}Q T^{1/2}
  \ll  N^{3/4} T^{3/10+\varepsilon},
\end{equation}
by taking $Q = N^{1/2}/T^{1/5}$ if $T^{6/5}\leq N \leq T^{8/5-\varepsilon}$.
If $T^{8/5-\varepsilon}\leq N \leq T^{2}$, then we take $Q=NT^{\varepsilon-1}$ and show the contribution is bounded by $O(T^{-1/2}N^{5/4+\varepsilon})$.

For the case  $NX/RQ\gg T^{2\varepsilon}$ and $n_1^2n_2N/q^3 \ll T^\varepsilon$, or  the case  $NX/RQ\ll T^{2\varepsilon}$ and $n_1^2n_2N/q^3 \gg T^\varepsilon$, then the contribution to  $\mathscr S_1^\pm(N;X,R)$ is negligibly small by Lemma \ref{lemma:Psi}.

For the case  $NX/RQ\ll T^{2\varepsilon}$ and $n_1^2n_2N/q^3 \ll T^\varepsilon$, then by \eqref{eqn:RS3}, \eqref{eqn:V<<}, Lemma \ref{lemma:Psi}, and Weil bounds for Kloosterman sums, we can bound terms in \eqref{eqn:S1} trivially, showing the contribution to  $\mathscr S_1^\pm(N;X,R)$ is bounded by
\[
  \ll X\frac{N}{Q} R \frac{RT}{N} T^{-1/2} R^{1/2}
  \ll \frac{Q^{7/2}T^{1/2}}{N} \ll N^{3/4} T^{3/10+\varepsilon},
\]
by taking $Q = N^{1/2}/T^{1/5}$.
This completes the proof of Theorem \ref{thm:3}.
\end{proof}

\subsection{Applying Poisson again}\label{subsec:Poisson}

In this subsection we prove Lemma \ref{lemma:T}.
\begin{proof}[Proof of Lemma \ref{lemma:T}]
Opening the absolute square and interchanging the order of summations,
we get
\begin{equation}\label{eqn:T<<Sigma}
  \mathcal T^{\pm} \leq
   \sum_{n_1\leq 2R}
   \sum_{\substack{R<q\leq  2R \\ n_1\mid q}} \frac{1}{q}
   \sum_{\substack{m\asymp qT/N \\ (m,q)=1}}  \\
   \sum_{\substack{R<q'\leq  2R \\ n_1\mid q'}} \frac{1}{q'}
   \sum_{\substack{m'\asymp q'T/N \\ (m',q')=1}}
   | \Sigma |,
\end{equation}
where
\begin{equation*}
  \Sigma :=   \sum_{n_2=1}^{\infty}
   \frac{1}{n_2} W\left(\frac{n_1^2n_2}{M}\right)
  S\left(-\bar{m},\pm n_2;\frac{q}{n_1}\right)
  S\left(\bar{m}',\mp n_2;\frac{q'}{n_1}\right)
  \mathcal W(m,n_2,q) \overline{ \mathcal W(m',n_2,q')}.
\end{equation*}

Breaking  the $n_2$-sum modulo $qq'/n_1^2$, we obtain
\begin{multline*}
  \Sigma =
  \sum_{b \mod qq'/n_1^2}  \sum_{n_2\in\mathbb{Z}}
   \frac{W\left(\frac{n_1^2 b+n_2qq'}{M}\right)}{b+n_2qq'/n_1^2}
  S\left(-\bar m,\pm b;\frac{q}{n_1}\right)
  S\left(\bar m',\mp b;\frac{q'}{n_1}\right)
    \\
  \cdot
  \mathcal W(m,b+n_2qq'/n_1^2,q) \overline{ \mathcal W(m',b+n_2qq'/n_1^2,q')}.
\end{multline*}
Applying the Poisson summation formula to the $n_2$-sum, we get
\begin{multline*}
  \Sigma =  \sum_{b \mod qq'/n_1^2}
   S\left(-\bar m,\pm b;\frac{q}{n_1}\right)
  S\left(\bar m',\mp b;\frac{q'}{n_1}\right)
   \sum_{n\in\mathbb{Z}} \int_{\mathbb{R}}
   \frac{W\left(\frac{n_1^2b+ \xi qq'}{M}\right)}{b+ \xi qq'/n_1^2}
    \\
  \cdot
  \mathcal W(m,b+\xi qq'/n_1^2,q)
  \overline{ \mathcal W(m',b+\xi qq'/n_1^2,q')}
    e(-n \xi) \dd \xi.
\end{multline*}
Making a change of variables $\frac{n_1^2b+\xi qq'}{M} \mapsto \xi$, we have
\begin{equation*}
  \Sigma = \frac{n_1^{2}}{qq'}
   \sum_{n\in\mathbb{Z}} \mathfrak{C}(n,m,m';q/n_1,q'/n_1)
    \mathfrak{J}(n,m,m';q,q') ,
\end{equation*}
where
\[
   \mathfrak{C}(n,m,m';\hat q,\hat q')
   := \sum_{b \mod qq'/n_1^2}
   S\left(-\bar m,\pm b;\hat{q} \right)
  S\left(\bar m',\mp b;\hat q'\right)
    e\left(\frac{n b}{\hat q \hat q'}\right),
\]
and
\begin{equation*}
   \mathfrak{J}(n,m,m';q,q'):= \int_{-\infty}^{\infty}
   \frac{W\left(\xi\right)}{\xi}
  \mathcal W\Big(m,\frac{M}{n_1^2}\xi,q\Big)
  \overline{ \mathcal W\Big(m',\frac{M}{n_1^2}\xi,q'\Big)}
    e\left(-n \frac{M\xi}{qq'} \right) \dd \xi.
\end{equation*}

The character sum $\mathfrak{C}$ has already appeared in many places, see e.g. \cite[Lemma 11]{Munshi2015circleIII}. We have the following bounds.
\begin{lemma}
  We have
  \[
    \mathfrak{C}(n,m,m';\hat q,\hat q')
    \ll \hat{q} \hat{q}' (\hat{q},\hat{q}',n).
  \]
  Moreover if $n=0$ then we get $\mathfrak{C}(n,m,m';\hat q,\hat q') =0$ unless $\hat{q}=\hat{q}'$, in which case we have
  \[
    \mathfrak{C}(n,m,m';\hat q,\hat q')
    \ll \hat{q}^2 (\hat{q},m-m').
  \]
\end{lemma}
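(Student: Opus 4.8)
The claim to establish is the bound for the character sum
\[
\mathfrak{C}(n,m,m';\hat q,\hat q') := \sum_{b \bmod qq'/n_1^2}
S\left(-\bar m,\pm b;\hat q\right) S\left(\bar m',\mp b;\hat q'\right) e\left(\frac{nb}{\hat q\hat q'}\right),
\]
namely that it is $\ll \hat q\hat q'(\hat q,\hat q',n)$ in general, and that it vanishes unless $\hat q=\hat q'$ when $n=0$, with the refined bound $\ll \hat q^2(\hat q,m-m')$ in that case.

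\medskip

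The plan is to open the two Kloosterman sums by their definition, $S(\alpha,\beta;c) = \sideset{}{^\star}\sum_{x\bmod c} e((\alpha x+\beta\bar x)/c)$, introducing inner variables $x\bmod \hat q$ and $y\bmod \hat q'$. This turns $\mathfrak C$ into a triple sum over $x,y,b$, where the $b$-sum runs over a complete residue system modulo $\hat q\hat q'$ (using $qq'/n_1^2=\hat q\hat q'$) and appears only linearly in the exponential, through the factor $e\big(b(\pm\bar x\mp\bar y+n)/(\hat q\hat q')\big)$ once we collect the $b$-dependent terms; here I should be careful to combine the two $b$-contributions over a common modulus via CRT when $(\hat q,\hat q')=1$, and more carefully via a lifting argument in general. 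Executing the complete $b$-sum gives $\hat q\hat q'$ times the indicator that $\hat q\hat q'\mid(\pm\bar x\mp\bar y+n)$ (in the coprime case), collapsing the problem to counting pairs $(x,y)$ with a congruence relating $\bar x$, $\bar y$ and $n$. In the coprime case this congruence splits by CRT into $\bar x\equiv \mp n\bmod{\hat q}$-type conditions, pinning $x$ and $y$ essentially uniquely and leaving only the residual phases in $m,m'$, which are bounded trivially by $1$; hence $\mathfrak C\ll \hat q\hat q'$. The factor $(\hat q,\hat q',n)$ enters exactly in the non-coprime case: writing $g=(\hat q,\hat q')$ and handling the $b$-sum over the non-coprime part introduces a factor of $g$ when $g\mid n$ and produces cancellation otherwise, so one picks up $(\hat q,\hat q',n)$ as claimed. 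This is the standard manipulation, e.g.\ as in \cite[Lemma 11]{Munshi2015circleIII}, so I would cite that and only indicate the CRT bookkeeping.

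\medskip

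For the special case $n=0$: after executing the complete $b$-sum one is left (in the coprime part) with the condition $\pm\bar x\mp\bar y\equiv 0$, i.e.\ $\bar x\equiv\bar y$ modulo the relevant modulus, which forces $\hat q$ and $\hat q'$ to share that modulus; pushing this through shows $\mathfrak C=0$ unless $\hat q=\hat q'=:\hat q$. When $\hat q=\hat q'$ and $n=0$, the $b$-sum modulo $\hat q^2$ of $e(b(\bar x-\bar y)/\hat q^2)$ after opening both Kloosterman sums forces $\bar x\equiv \bar y\bmod\hat q$, i.e.\ $x\equiv y\bmod\hat q$, collapsing the double $(x,y)$-sum to a single sum over $x\bmod\hat q$ with residual phase $e((m-m')\bar x/\hat q)$ roughly speaking (tracking signs $-\bar m x$ versus $+\bar m' x$ carefully). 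That single exponential sum is a Ramanujan-type sum, bounded by $(\hat q,m-m')$ via the standard estimate, and the overall $\hat q$ from the complete $b$-sum gives $\mathfrak C\ll \hat q^2(\hat q,m-m')$.

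\medskip

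The main technical obstacle I anticipate is the non-coprime bookkeeping: when $(\hat q,\hat q')=g>1$, the moduli $\hat q$, $\hat q'$, and $\hat q\hat q'$ do not split cleanly under CRT, so one must factor out the common part, treat the $b$-sum prime-by-prime (or via a Hensel-lifting / stratification argument at the primes dividing $g$), and verify that the resulting count of $(x,y)$ pairs, together with the partial $b$-sum, yields exactly the factor $(\hat q,\hat q',n)$ rather than a larger power of $g$. The inversions $\bar m$, $\bar m'$ are harmless since they are units; the only real care is in the multiplicativity structure at bad primes. Everything else — opening Kloosterman sums, executing linear exponential sums, trivial bounds on unit-phase factors — is routine, so I would present the coprime case in full and then indicate the modifications at non-coprime primes, referring to \cite{Munshi2015circleIII} for the detailed verification.
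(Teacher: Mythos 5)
Your sketch is correct and coincides with the standard argument: the paper itself does not prove this lemma but simply cites Munshi \cite[Lemma 11]{Munshi2015circleIII}, which is precisely the reference and the mechanism (open both Kloosterman sums, execute the complete linear $b$-sum mod $\hat q\hat q'$, count $(x,y)$ via the resulting divisibility condition, with CRT in the coprime case and prime-by-prime care at the common part) that you describe. One small imprecision to fix if you were to write it out: after collapsing $y\equiv x\pmod{\hat q}$ in the $n=0$, $\hat q=\hat q'$ case, the residual single sum is $\sideset{}{^\star}\sum_{x\bmod\hat q}e\big((\bar m'-\bar m)x/\hat q\big)$, a Ramanujan sum in $\bar m'-\bar m$ rather than in $m-m'$; this is harmless because $\bar m'-\bar m\equiv\overline{mm'}(m-m')\pmod{\hat q}$, so $(\hat q,\bar m'-\bar m)=(\hat q,m-m')$, but the phase is not literally $e\big((m-m')\bar x/\hat q\big)$ as written.
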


We also need bounds for the integrals $\mathfrak{J}(n,m,m';q,q')$.
We will prove the following lemma.
\begin{lemma}\label{lemma:J}
  We have
  \begin{itemize}
    \item [(i)]  If $n \geq C RQ^2/NX^2$ for some large constant $C>0$, then $\mathfrak{J}(n,m,m';q,q') \ll n^{-2} T^{-2020}$.
    \item [(ii)] Assume  $V$ is $P$-inert for some $P\ll T^{\eta}$ with some positive constant  $\eta<1/10$. Assume $Q\geq T^{1/3+\varepsilon} P^{2/3}$. If $T^\varepsilon R^2Q^3/N^2X^3  \ll n \ll RQ^2/NX^2$, then $$\mathfrak{J}(n,m,m';q,q') \ll T^{-1} (nM/R^2)^{-1/2}.$$
    \item [(iii)] If $n \ll T^\varepsilon R^2Q^3/N^2X^3$, then $\mathfrak{J}(n,m,m';q,q') \ll T^{-1+\varepsilon}$.
  \end{itemize}
\end{lemma}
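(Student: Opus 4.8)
The plan is to substitute the definition \eqref{eqn:W} of $\mathcal W$ into $\mathfrak J(n,m,m';q,q')$ and analyse the resulting oscillatory integral by stationary phase, separating the regimes (i)--(iii) according to the size of the explicit linear oscillation $e(-nM\xi/qq')$ against the oscillation carried by the terms $\pm 3(n_1^2n_2Ny)^{1/3}/q$ inside $\mathcal W$. Writing $n_1^2n_2=M\xi$, the $\xi$-dependence of $\mathcal W(m,\tfrac{M}{n_1^2}\xi,q)$ sits in the phase $\pm 3(M\xi Ny)^{1/3}/q$ and in the $T^\varepsilon$-inert cutoff $w$; differentiating under the $y$-integral and invoking the second derivative test as in \eqref{eqn:W<<}, one gets, in the regime $NX/QR\gg T^{2\varepsilon}$ relevant here, the bounds $\partial_\xi^{\,j}\mathcal W(m,\tfrac{M}{n_1^2}\xi,q)\ll (NX/QR)^jT^{-1/2+\varepsilon}$, since $(MN)^{1/3}\asymp NX/Q$ and $q\asymp R$. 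Thus the amplitude of the $\xi$-integral defining $\mathfrak J$ has size $\ll T^{-1+\varepsilon}$ and $j$-th derivative $\ll (NX/QR)^jT^{-1+\varepsilon}$, while $e(-nM\xi/qq')$ oscillates with frequency $\asymp nM/R^2$.

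Part (iii) is then immediate: when $n\ll T^\varepsilon R^2Q^3/N^2X^3$ we have $nM/R^2\ll T^\varepsilon$, so the linear oscillation is inert and the trivial estimate $\mathfrak J\ll\int_{\xi\asymp1}\bigl|W(\xi)/\xi\bigr|\,|\mathcal W|^2\,\dd\xi\ll T^{-1+\varepsilon}$ suffices. Part (i) follows by repeated integration by parts in $\xi$: each step contributes a factor $\asymp \frac{R^2}{nM}\cdot\frac{NX}{QR}=\frac{RQ^2}{nNX^2}$, which is $\leq 1/C$ once $n\geq CRQ^2/NX^2$; performing sufficiently many integrations by parts, and bounding two of the gained factors crudely using that $RQ^2/NX^2\ll T^{O(1)}$, produces the stated bound $\mathfrak J\ll n^{-2}T^{-2020}$.

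The substantive case is (ii), where $T^\varepsilon\ll nM/R^2\ll NX/QR$. Here I would substitute \eqref{eqn:W} and carry out the $\xi$-integral first. Its phase is $\Xi_{y,y'}(\xi)=2\pi\bigl(\pm3(M\xi Ny)^{1/3}/q\mp3(M\xi Ny')^{1/3}/q'-nM\xi/qq'\bigr)$, with $\Xi^{(j)}_{y,y'}\ll NX/QR$ for $j\geq1$ (the linear term only affecting $j=1$). When $\Xi'_{y,y'}$ has no zero on the support one integrates by parts, obtaining a negligible contribution unless $|\Xi'_{y,y'}|\asymp nM/R^2\gg T^\varepsilon$ throughout, in which case one integrates by parts against that term instead. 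When $\Xi'_{y,y'}(\xi_0)=0$ for some $\xi_0\asymp1$, the key point is that at $\xi_0$ the $(M\xi)^{1/3}$-terms are forced to balance $nM/qq'$, which \emph{pins} the second derivative: $\Xi''_{y,y'}(\xi_0)=-\tfrac{4\pi}{3}nM\xi_0^{-1}/qq'\asymp nM/R^2$. Applying Lemma \ref{lemma:stationary_phase}(ii) to the $\xi$-integral then yields a factor $\ll(nM/R^2)^{-1/2}$; in the subrange where $n$ is near the lower end and the cubic (and higher) terms of $\Xi_{y,y'}$, of size $NX/QR$, dominate the quadratic term over the bump of width $(nM/R^2)^{-1/2}$, one instead obtains an Airy-type bound $\ll(NX/QR)^{-1/3}$, which is again $\ll(nM/R^2)^{-1/2}$ in precisely that subrange. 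The residual double integral in $(y,y')$ is then treated by the second derivative test, the phase in each variable having second derivative $\gg T$ coming from $T\varphi''$, contributing $T^{-1/2}$ in each; altogether $\mathfrak J\ll T^{-1}(nM/R^2)^{-1/2}$. This step is where the hypothesis $Q\geq T^{1/3+\varepsilon}P^{2/3}$ is invoked: it is what guarantees that the amplitude of the $\xi$-integral — which inherits the $P$-inertness of $V$ only through the weakly varying stationary point and through the cutoffs $w$ — is inert on a scale longer than the relevant oscillation, so that the stationary phase and van der Corput estimates apply legitimately.

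The main obstacle is the bookkeeping in part (ii): locating the stationary points of the coupled three-variable oscillatory integral, controlling the several $n$-subregimes (genuine stationary phase in $\xi$, Airy-type behaviour, and non-stationarity), keeping track of the cross-derivatives that appear in the residual $(y,y')$-integral once $\xi$ has been integrated out, and verifying in each case that the $P$-dependent amplitude is sufficiently inert for the relevant lemma to apply. The most delicate point is obtaining exactly $\mathfrak J\ll T^{-1}(nM/R^2)^{-1/2}$, rather than something slightly weaker, in the degenerate subrange near $nM/R^2\asymp T^\varepsilon$ via the Airy-type estimate, and confirming that the constraint $Q\geq T^{1/3+\varepsilon}P^{2/3}$ is indeed the right sufficient condition there.
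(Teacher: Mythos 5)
Parts (i) and (iii) follow the paper's reasoning. For (ii), however, you reverse the order of integration relative to the paper: you perform the $\xi$-integral first (for fixed $y,y'$). Your observation that the stationary-phase second derivative in $\xi$ is pinned to $\Xi''(\xi_0)=-\tfrac{4\pi}{3}\tfrac{nM}{qq'}\xi_0^{-1}\asymp nM/R^2$ by the linear term is correct and nice, and it legitimately yields a factor $(nM/R^2)^{-1/2}$ from the $\xi$-integral. The gap is in the next step: you assert that the residual $(y,y')$-integral can be handled by the second derivative test because "the phase in each variable has second derivative $\gg T$ coming from $T\varphi''$." This is not justified. The $\xi$-stationary point contributes a new phase $\Xi(\xi_0(y,y'))\asymp d^{3/2}/(3c)^{1/2}$ with $d = \pm 3(MNy)^{1/3}/q\mp 3(MNy')^{1/3}/q'$ and $c = nM/(qq')$; its $y$-second derivative is $\asymp (\partial_y d)^2/(c\,\xi_0^{1/3}) + \xi_0^{1/3}\partial_y^2 d \asymp (NX/QR)^2/(nM/R^2)$. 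Near the lower end $nM/R^2\asymp T^{\varepsilon}$ with $Q$ near $T^{1/3}$ this is of size up to $T^{4/3+\varepsilon}$, which exceeds $T$. Worse, the dominant term $(\partial_y d)^2/(c\,\xi_0^{1/3})$ is positive while $T\varphi''(y)$ is negative for $\varphi(u)=u^{\beta}$, $0<\beta<1$, so the two can cancel, and the total $y$-second derivative can vanish. Your approach therefore has no lower bound on $\partial_y^2 F$ in the whole regime of (ii), so the second derivative test cannot be invoked as stated, and the Airy-type fallback you invoke pertains to the $\xi$-integral only, not to this degeneracy.

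The paper avoids exactly this by doing the $y$- and $y'$-integrals first: the phase $h_7(y)=Ty^{\beta}-Ay+3By^{1/3}$ has $h_7''\asymp T$ cleanly dominating $B\ll T/Q$, and its stationary point is expanded perturbatively in $B/A\ll T^{\varepsilon}/Q$ (this is where $Q\geq T^{1/3}$ first enters, to control $B^4/A^3$). Only then is the $\xi$-integral performed, with the explicit phase $h_8(\xi)$, and $(nM/R^2)^{-1/2}$ is produced. Also, your account of where $Q\geq T^{1/3+\varepsilon}P^{2/3}$ is used does not match the paper's mechanism: the paper uses it to absorb the $O(PT^{\varepsilon}/(T^{1/2}Q))$ error from the $y$-stationary-phase expansion into $T^{-1}(nM/R^2)^{-1/2}$ via $(nM/R^2)^{1/2}\ll T^{1/2+\varepsilon}/Q^{1/2}\ll Q/P$, not for inertness of a $\xi$-amplitude, which in your arrangement never sees $V$ at all.
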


\begin{proof}
If $n \ll T^\varepsilon R^2Q^3/N^2X^3$, then by \eqref{eqn:W<<}, we get $\mathfrak{J}(n,m,m';q,q') \ll T^{-1+\varepsilon}$. This proves Lemma \ref{lemma:J} (iii).

\medskip

If $n \geq C RQ^2/NX^2$ for some large constant $C>0$, then we have $\frac{Mn}{qq'} \geq C' \frac{NX}{RQ}$ for some large constant $C'>0$.
By \eqref{eqn:W}, we have
\begin{multline*}
   \mathfrak{J}(n,m,m';q,q')=
  \int_{\mathbb{R}} \frac{V\left(y\right)}{y^{1/2}}  e\left(T\varphi(y)+\frac{mN}{q}y \right)
  \int_{\mathbb{R}} \frac{V\left(y'\right)}{y'^{1/2}} e\left(-T\varphi(y')-\frac{m'N}{q'}y' \right)    \\
  \cdot
   \int_{-\infty}^{\infty}
   \frac{W\left(\xi\right)}{\xi} w\left(\frac{(M \xi)^{1/3} Q}{N^{2/3}X y^{2/3}}\right) w\left(\frac{(M \xi)^{1/3} Q}{N^{2/3}X y'^{2/3}}\right)   \\
  \cdot e\left(\pm 3 \frac{ (M Ny\xi)^{1/3} }{q} \mp 3 \frac{ (M Ny'\xi)^{1/3} }{q'} -n \frac{M\xi}{qq'} \right) \dd \xi
   \dd y \dd y'.
\end{multline*}
Note that by $M\asymp N^2X^3/Q^{3}$ we have $\frac{(M Ny)^{1/3} }{q} \ll \frac{NX}{RQ}$.
Let
\[
  h_6(\xi)= \pm 3 \frac{ (M Ny\xi)^{1/3} }{q} \mp 3 \frac{ (M Ny'\xi)^{1/3} }{q'} -n \frac{M\xi}{qq'}.
\]
Then we have
\[
  h_6'(\xi)= \pm \frac{ (M Ny)^{1/3} }{q} \xi^{-2/3} \mp \frac{ (M Ny')^{1/3} }{q'} \xi^{-2/3}
   -n \frac{M}{qq'} \gg \frac{NX}{RQ},
\]
\[
  h_6''(\xi)= \mp \frac{2 (M Ny)^{1/3} }{3q} \xi^{-5/3} \pm \frac{2 (M Ny')^{1/3} }{3 q'} \xi^{-5/3} \ll \frac{NX}{RQ},
\]
\[
  h_6^{(j)}(\xi) \ll_j \frac{NX}{RQ}, \quad j\geq2.
\]
Note that the weight function $\frac{W\left(\xi\right)}{\xi} w\left(\frac{(M \xi)^{1/3} Q}{N^{2/3}X y^{2/3}}\right) w\left(\frac{(M \xi)^{1/3} Q}{N^{2/3}X y'^{2/3}}\right)$ in $\xi$ is $T^\varepsilon$-inert.  By  Lemma \ref{lemma:stationary_phase} (i), we prove Lemma \ref{lemma:J} (i).

\medskip
Now we assume $Q\geq T^{1/3}$ and $T^\varepsilon R^2Q^3/N^2X^3  \ll n \ll RQ^2/NX^2$.
We also assume $V$ is $P$-inert and $\varphi(u)=u^\beta$.
We write $\mathcal W\left(m,\frac{M}{n_1^2}\xi,q\right)$ as
\begin{equation*}
   \int_{\mathbb{R}} V_1\left(y\right) e\left(T y^\beta +\frac{mN}{q}y \pm 3 \frac{ (M N \xi y)^{1/3} }{q}\right)
   \dd y ,
\end{equation*}
where $V_1(y) = \frac{V\left(y\right)}{y^{1/2}} w\left(\frac{(M \xi)^{1/3} Q}{N^{2/3}X y^{2/3}}\right) $ is a $P$-inert function.
Let
\[
  A=-mN/q, \quad
  B=\pm \frac{ (M N \xi)^{1/3} }{q}, \quad
  \textrm{and} \quad
  h_7(y)=T y^\beta -Ay +3B y^{1/3}.
\]
Note that  $B \asymp NX/RQ \ll T^{1+\varepsilon}/Q \leq T^{2/3+\varepsilon}$.
We have
\[
  h_7'(y) =  \beta T y^{\beta-1} -A + B y^{-2/3} \gg T + |A|,
\]
unless  $A\asymp T$. Note that $h_7^{(j)}(y) \asymp_{\beta,j} T$ for $y\asymp 1$.
Hence by Lemma \ref{lemma:stationary_phase} (i), $\mathcal{W}$ is negligibly small unless $A\asymp T$.
Since $Q\geq T^{1/3}$, we have $B^4/A^3 =O(T^\varepsilon/Q)$.
The solution to $h_7'(y)=0$, i.e., $T\beta y^{\beta-1}-A+By^{-2/3}=0$ in the support of $V_1$ will be $y_0+y_1+y_2+y_3$ where
\[
  y_0 = \left(\frac{A}{T\beta}\right)^{\frac{1}{(\beta-1)}}, \
  y_1 
  = \frac{y_0^{1/3}}{1-\beta} \frac{B}{A} ,
\]
\[
  y_2 
  = \frac{(2-3\beta) }{6 (\beta-1)^2 y_0^{1/3}} \frac{B^2}{A^2},
 \quad
  y_j \ll \left(\frac{B}{A}\right)^j  , \quad 0\leq j\leq3.
\]
Here $y_0$ satisfies that  $T\beta y_0^{\beta-1}-A=0$, $y_1$ satisfies that
$T\beta y_0^{\beta-1} (1+(\beta-1)y_1/y_0) -A+By_0^{-2/3}=0$, and $y_2$ satisfies that
$T\beta y_0^{\beta-1} \big(1+(\beta-1)\frac{y_1+y_2}{y_0}+\frac{(\beta-1)(\beta-2)}{2} \frac{y_1^2}{y_0^2}\big) - A + By_0^{-2/3} (1-\frac{2}{3}\frac{y_1}{y_0})=0$.
Note that
\begin{multline*}
  0 = T\beta (y_0+y_1+y_2+y_3)^{\beta-1}-A+B(y_0+y_1+y_2+y_3)^{-2/3} \\
  =
  T\beta y_0^{\beta-1} \left( 1+(\beta-1)\frac{y_1+y_2+y_3}{y_0}+\frac{(\beta-1)(\beta-2)}{2} \frac{y_1^2}{y_0^2} + O((B/A)^3)\right) \\
  - A
  + By_0^{-2/3} \left(1-\frac{2}{3}\frac{y_1}{y_0} + O((B/A)^2)\right) \\
  = T\beta y_0^{\beta-1}  (\beta-1)\frac{y_3}{y_0}  +
    O(B^3/A^2).
\end{multline*}
This give $y_3 \ll (B/A)^3$.
Hence
\begin{multline*}
  h_7(y_0+y_1+y_2+y_3)  \\
  = T y_0^\beta (1+y_1/y_0+y_2/y_0+y_3/y_0)^{\beta} -A(y_0+y_1+y_2+y_3) +3B (y_0+y_1+y_2)^{1/3} +O(B^4/A^3) \\
  = \left(\frac{1}{\beta} -1\right)Ay_0
  + 3y_0^{1/3} B
  + \frac{1}{2(1-\beta) y_0^{1/3}} \frac{B^2}{A}
  - \frac{\beta}{6(\beta-1)^2 y_0} \frac{B^3}{A^2} + O(T^\varepsilon/Q).
\end{multline*}
Note that $h_7''(y)=\beta(\beta-1) T y^{\beta-2} - \frac{2}{3} B y^{-5/3}$. We have
\[
  h_7''(y_0+y_1+y_2+y_3) = \beta(\beta-1) T y_0^{\beta-2} (1+O(B/A)).
\]
By the stationary phase method (see Lemma \ref{lemma:stationary_phase} (ii)), we get
\[
  \mathcal W\Big(m,\frac{M}{n_1^2}\xi,q\Big) = \frac{ e\left(h_7(y_0+y_1+y_2+y_3)\right)}{\sqrt{T}}
  w_1(y_0+y_1+y_2+y_3) (1+O(B/A)) + O(T^{-2020}),
\]
for certain $P$-inert function $w_1$. Note that $y_1+y_2+y_3 \ll B/A \ll T^\varepsilon /Q$, we have $w_1(y_0+y_1+y_2+y_3)=w_1(y_0)+O(P T^\varepsilon /Q)$. Hence we obtain
\[
  \mathcal W\Big(m,\frac{M}{n_1^2}\xi,q\Big) = \frac{1}{\sqrt{T}}
  e\left(3y_0^{1/3} B
  + \frac{1}{2(1-\beta) y_0^{1/3}} \frac{B^2}{A}
  - \frac{\beta}{6(\beta-1)^2 y_0} \frac{B^3}{A^2} \right) w_2(y_0)
  + O\left( \frac{PT^{\varepsilon}}{T^{1/2} Q}\right),
\]
where $w_2(y_0)=e((\frac{1}{\beta} -1)Ay_0)w_1(y_0)$.
Hence
\begin{multline*}
   \mathfrak{J}(n,m,m';q,q') \ll
   \bigg| \frac{1}{T} \int_{-\infty}^{\infty}
   \frac{W\left(\xi\right)}{\xi}
  e\left(  3y_0^{1/3} B
  + \frac{1}{2(1-\beta) y_0^{1/3}} \frac{B^2}{A}
  - \frac{\beta}{6(\beta-1)^2 y_0} \frac{B^3}{A^2} \right)
  \\ \cdot
  e\left( -3y_0'^{1/3} B'
  - \frac{1}{2(1-\beta) y_0'^{1/3}} \frac{B'^2}{A'}
  + \frac{\beta}{6(\beta-1)^2 y_0'} \frac{B'^3}{A'^2} \right)
    e\left(-n \frac{M\xi}{qq'} \right) \dd \xi \bigg|
  + \frac{P T^\varepsilon}{TQ},
\end{multline*}
where $A',B',y_0'$ are defined the same as $A,B,y_0$ but with $m,q$ replaced by $m',q'$.
Note that we assume $n \ll RQ^2/NX^2$ and $Q\geq T^{1/3+\varepsilon}P^{2/3}$, hence we have
\[
  (nM/R^2)^{1/2} \ll (NX/RQ)^{1/2} \ll T^{1/2+\varepsilon}/Q^{1/2} \ll Q/P,
\]
that is,
\[
  T^\varepsilon
   P T^{-1}Q^{-1} \ll T^{-1} (nM/R^2)^{-1/2}.
\]

Now we deal with the $\xi$-integral.
Let
\begin{multline*}
  h_8(\xi) =  3y_0^{1/3} B
  + \frac{1}{2(1-\beta) y_0^{1/3}} \frac{B^2}{A}
  - \frac{\beta}{6(\beta-1)^2 y_0} \frac{B^3}{A^2}
  \\  -3y_0'^{1/3} B'
  - \frac{1}{2(1-\beta) y_0'^{1/3}} \frac{B'^2}{A'}
  + \frac{\beta}{6(\beta-1)^2 y_0'} \frac{B'^3}{A'^2}
   -n \frac{M\xi}{qq'}.
\end{multline*}
Recall that $B=\pm \frac{ (M N \xi)^{1/3} }{q}$ and $B'=\pm \frac{ (M N \xi)^{1/3} }{q'}$.
By $R\gg N/T$ and $NX/RQ\gg T^\varepsilon$, we get $X\gg QT^{\varepsilon-1}$. So we obtain for $|n|\geq1$,
\[
  nM/qq' \gg \frac{N^2 X^3}{R^2 Q^3 } \gg T^\varepsilon \frac{N^2X^2}{R^2Q^2 T} \asymp T^\varepsilon B^2/T.
\]
Hence
\begin{multline*}
  h_8'(\xi) = \pm y_0^{1/3} \frac{ (M N)^{1/3} }{q} \xi^{-2/3}
  + \frac{1}{2(1-\beta) y_0^{1/3}} \frac{1}{A} \frac{ (M N)^{2/3} }{q^2} \xi^{-1/3}
  \mp \frac{\beta}{6(\beta-1)^2 y_0} \frac{1}{A^2} \frac{M N}{q^3}
  \\  \mp y_0'^{1/3} \frac{ (M N)^{1/3} }{q'} \xi^{-2/3}
  - \frac{1}{2(1-\beta) y_0'^{1/3}} \frac{1}{A'} \frac{ (M N)^{2/3} }{q'^2} \xi^{-1/3}
  \pm \frac{\beta}{6(\beta-1)^2 y_0'} \frac{1}{A'^2} \frac{M N}{q'^3}
   -n \frac{M}{qq'} \\
   = \pm \left( y_0^{1/3} \frac{ (M N)^{1/3} }{q} -
    y_0'^{1/3} \frac{ (M N)^{1/3} }{q'} \right) \xi^{-2/3}
   -n \frac{M}{qq'} + O\left(\frac{B^2}{T}\right) ,
\end{multline*}
and
\[
  h_8''(\xi) = \mp \frac{2}{3} \left( y_0^{1/3} \frac{ (M N)^{1/3} }{q} -
    y_0'^{1/3} \frac{ (M N)^{1/3} }{q'} \right) \xi^{-5/3}
    + O\left(\frac{B^2}{T}\right).
\]
Since $nM/qq' \gg T^\varepsilon$, by repeated integration by parts we know that $\mathfrak{J}(n,m,m';q,q')$ is negligibly small unless
\begin{equation}\label{eqn:restriction}
  \bigg( y_0^{1/3} \frac{ (M N)^{1/3} }{q} -
    y_0'^{1/3} \frac{ (M N)^{1/3} }{q'} \bigg)
   \asymp \frac{nM}{R^2},
\end{equation}
in which case we have $h_8^{(j)}(\xi) \asymp_j nM/R^2$ for $j\geq2$,
and hence by Lemma \ref{lemma:stationary_phase} (ii) we get  $\mathfrak{J}(n,m,m';q,q') \ll T^{-1} (nM/R^2)^{-1/2}$. This completes the proof of Lemma \ref{lemma:J}.
\end{proof}

Now we are ready to bound $\Sigma$. Depending on $n=0$, $n$ small, and $n$ large, we have
\[
  \Sigma \ll \Sigma_0 + \Sigma_{1} + \Sigma_2 + T^{-A},
\]
where
\[
  \Sigma_0 \ll \frac{n_1^{2}}{R^2} \delta_{q=q'} \frac{R^2}{n_1^2} (q/n_1,m-m')  T^{-1+\varepsilon}
   \ll T^{-1+\varepsilon} \delta_{q=q'}  (q/n_1,m-m'),
\]
\[
  \Sigma_1 \ll \frac{n_1^{2}}{R^2 }
   \sum_{1\leq n \ll  \frac{R^2 Q^3}{N^2 X^3} T^\varepsilon}
   \frac{R^2}{n_1^2} (q/n_1,q'/n_1,n)
   T^{-1+\varepsilon}
   \ll T^{-1+\varepsilon}
   \sum_{1\leq n \ll  \frac{R^2 Q^3}{N^2 X^3} T^\varepsilon}
   (q/n_1,q'/n_1,n) ,
\]
and
\begin{multline*}
  \Sigma_2
   \ll \frac{n_1^{2}}{R^2 }
   \sum_{\frac{R^2 Q^3}{N^2 X^3} T^\varepsilon \ll n
   \ll \frac{R Q^2 }{NX^2}}  \frac{R^2}{n_1^2} (q/n_1,q'/n_1,n)
   \frac{RQ^{3/2}}{n^{1/2}NX^{3/2}} \frac{1}{T} \\
    \ll \frac{1}{T}  \sum_{\frac{R^2 Q^3}{N^2 X^3} T^\varepsilon \ll n \ll \frac{R Q^2 }{NX^2}} (q/n_1,q'/n_1,n)
   \frac{RQ^{3/2}}{n^{1/2}NX^{3/2}}.
\end{multline*}
Denote the corresponding contribution from $\Sigma_j$ to $\mathcal{T}^\pm$ by $\mathcal{T}_j$. We first bound $\mathcal{T}_0$. By \eqref{eqn:T<<Sigma}, we get
\begin{equation*}
  \mathcal T_0 \ll \frac{1}{R^2} \sum_{n_1\leq 2R}
   \sum_{\substack{R<q\leq  2R \\ n_1\mid q}}
   \sum_{\substack{m\asymp qT/N \\ (m,q)=1}}  \\
   \sum_{\substack{m'\asymp qT/N \\ (m',q)=1}}
   T^{-1+\varepsilon} (q/n_1,m-m').
\end{equation*}
Arguing depending on whether $m=m'$ or not, we have
\begin{multline}\label{eqn:T0}
   \mathcal T_0 \ll T^{-1+\varepsilon} \frac{1}{R}  \sum_{n_1\leq 2R}
       \frac{1}{n_1}
   \sum_{\substack{R<q\leq  2R \\ n_1\mid q}}
   \sum_{\substack{m\asymp qT/N \\ (m,q)=1}}1
   \\
   + T^{-1+\varepsilon} \frac{1}{R^2 }  \sum_{n_1\leq 2R}
   \sum_{\substack{R<q\leq  2R \\ n_1\mid q}}
   \sum_{\substack{m\asymp qt/N \\ (m,q)=1}}
   \sum_{1\leq |h| \ll \frac{RT}{N}}
    (q/n_1,h)
    \ll \frac{R}{N} T^{\varepsilon}  .
\end{multline}
Next, we consider $\mathcal{T}_1$. By \eqref{eqn:T<<Sigma}, we obtain
\begin{multline}\label{eqn:T1}
  \mathcal T_1
  \ll  \frac{1}{R^2} \sum_{n_1\leq 2R}
   \sum_{\substack{R<q\leq  2R \\ n_1\mid q}}
   \sum_{\substack{m\asymp qt/N \\ (m,q)=1}}
   \sum_{\substack{R<q'\leq  2R \\ n_1\mid q'}}
   \sum_{\substack{m'\asymp q't/N \\ (m',q')=1}}
   T^{-1+\varepsilon}
   \sum_{1\leq n \ll  \frac{R^2 Q^3}{N^2 X^3} T^\varepsilon}
   (q/n_1,q'/n_1,n)
\\
   \ll  T^\varepsilon \frac{R^4 T Q^3}{N^4 X^3}.
\end{multline}
Finally, we treat $\mathcal{T}_2$, getting
\begin{multline}\label{eqn:T2}
  \mathcal T_2
  \ll  \frac{1}{R^2 } \sum_{n_1\leq 2R}
   \sum_{\substack{R<q\leq  2R \\ n_1\mid q}}
   \sum_{\substack{m\asymp qt/N \\ (m,q)=1}}
   \sum_{\substack{R<q'\leq  2R \\ n_1\mid q'}}
   \sum_{\substack{m'\asymp q't/N \\ (m',q')=1}}
    \\ \cdot
  \frac{1}{T}  \sum_{\frac{R^2 Q^3}{N^2 X^3} T^\varepsilon \ll n \ll \frac{R Q^2 }{NX^2}} (q/n_1,q'/n_1,n)
   \frac{RQ^{3/2}}{n^{1/2}NX^{3/2}}
   \ll  \frac{T R^{7/2}Q^{5/2}}{N^{7/2}X^{5/2}} .
\end{multline}
Note that
$T^\varepsilon \frac{R^4 T Q^3}{N^4 X^3} \ll \frac{T R^{7/2}Q^{5/2}}{N^{7/2}X^{5/2}}$ if $NX/RQ\gg T^{2\varepsilon}$. Combining \eqref{eqn:T0}, \eqref{eqn:T1} and \eqref{eqn:T2}, we complete the proof of Lemma \ref{lemma:T}.
\end{proof}


%
%

\section*{Acknowledgements}
The author is grateful to the referee for his/her very helpful comments and suggestions.
He  wants to thank Prof. Jianya Liu and Ze\'{e}v Rudnick for their help and encouragements.
He would like to thank Yongxiao Lin and  Qingfeng Sun for  many discussions on  \cite{FI2005,LinSun}.
The author started to try seriously to solve the Rankin--Selberg problem when he was a postdoc at Tel Aviv University. 
He likes to thank Tel Aviv Unviersity for providing a nice work environment.


\end{document}